\documentclass[reqno,12pt]{amsart} 
\usepackage{esint,mathrsfs,amsmath,amssymb,amsfonts}
\usepackage{verbatim} 
\usepackage{color}
\usepackage[hyperfootnotes=false]{hyperref}
\hypersetup{
  colorlinks,
  citecolor=blue,
  linkcolor=red,
  urlcolor=blue}
\usepackage[left=1.2in, right=1.2in, bottom=1.5in]{geometry}

\setlength{\parskip}{.4em}
\linespread{1.1}
\newtheorem{theorem}{Theorem}[section]
\newtheorem{lemma}[theorem]{Lemma}
\newtheorem{proposition}[theorem]{Proposition}
\newtheorem{corollary}[theorem]{Corollary}
\theoremstyle{definition}

\theoremstyle{remark}
\newtheorem{remark}[theorem]{Remark}
\numberwithin{equation}{section}

\newcommand{\norm}[1]{\lVert#1\rVert}

\newcommand{\cL}{\mathcal{L}}

\newcommand{\C}{\mathbb{C}}
\newcommand{\R}{\mathbb{R}}

\newcommand{\e}{\varepsilon}

\newcommand{\dist}{{\rm dist}}

\begin{document}

\title[]{Quantitative unique continuation for Neumann problem in planar $C^{1,\alpha}$ domains}
\author{Yingying Cai}
\address{Yingying Cai: Departament de Matemàtiques, Universitat Autònoma de Barcelona, Barcelona, Spain.}
\email{yingying.cai@uab.cat}

\author{Jiuyi Zhu}
\address{Jiuyi Zhu: Department of Mathematics, Louisiana State University, Baton Rouge, LA 70810.}
\email{zhu@math.lsu.edu}

\author{Jinping Zhuge}
\address{Jinping Zhuge: Morningside Center of Mathematics, Academy of Mathematics and systems science,
Chinese Academy of Sciences, Beijing 100190, China.}
\email{jpzhuge@amss.ac.cn}

\subjclass[2010]{35J25, 35B60, 35A02}

\begin{abstract}
    In this paper, we study the quantitative unique continuation property of the second-order elliptic operators under the vanishing Neumann boundary condition over $C^{1,\alpha}$ or convex domains in two dimensions. 
    We establish the optimal estimates of the number of critical points, doubling index and the total length of level curves. The key idea is to reduce the Neumann problem to the Dirichlet problem, which has been understood better, by a classical duality between an $A$-harmonic function and its stream function.

    \noindent \textbf{Keywords:} Unique continuation, Neumann problem, stream function, doubling index.
\end{abstract}
\maketitle

\tableofcontents

\section{Introduction}

\subsection{Motivation}

The unique continuation properties (UCP) for elliptic equations is a fundamental topic in classical analysis, extensively explored over the past decades. Recent work has increasingly focused on the quantitative UCP, including the explicit bound of vanishing order, doubling index, three-ball (or three-sphere) inequality, propagation of smallness, and quantitative estimate of nodal sets and critical/singular sets, etc, particularly for the second-order elliptic equations with Lipschitz coefficients or the Laplacian eigenfunctions on Riemannian manifolds; see e.g. \cite{GL86,DF88, L91,NV17,L18a,L18b,LLS24} and reference therein.
The Lipschitz regularity of the coefficients is a threshold for the UCP in three dimensions or higher, which may fail for general $C^{1-\e}$ coefficients for any $\e > 0$ \cite{P63,Man98}. 

On the other hand, the analysis becomes more complex when studying the quantitative UCP near a nonsmooth boundary under a certain boundary condition (such as Dirichlet or Neumann boundary conditions). The quantitative (or qualitative) UCP may even fail for less regular boundaries; see counterexamples in \cite{BW90,Hir18,KZ22}. We provide a brief review on the developments in this direction. Under the vanishing Dirichlet boundary condition, it is more or less well-known that the almost monotonicity of frequency function (a classical tool to study the UCP) centered on a boundary is valid if the boundary satisfies the so-called $A$-starshape condition (i.e., $n\cdot A(x-x_0) \ge 0$; see \cite{ZZ23} for a precise definition). This condition can be achieved for free by a simple change of variables if the boundary is $C^{1,1}$ (also see \cite{AEK95} for results in convex domains). Then $C^{1,\alpha}$ (or $C^{1,{\rm Dini}}$) boundaries for the Dirichlet problem were studied in the pioneering work by Adolfsson-Escauriaza \cite{AE97} (also see \cite{KN98}) 
by a refined change of variables that recovers $A$-starshape condition. In fact, it was shown that the doubling index and vanishing order are uniformly bounded on $C^{1,\alpha}$ boundary with a vanishing Dirichlet boundary condition. For less regular domains, such as Lipschitz domains, the boundedness of doubling index is still unknown. In  \cite[Theorem 2.3 and its Remark]{L91}, Lin asked in a Lipschitz domain if a harmonic function is trivial if it vanishes in an open set of the boundary and its normal derivative vanishes in a subset of positive surface measure. Recently, for Laplace operator, Tolsa \cite{T21} showed that on $C^1$ boundaries the doubling index is bounded a.e. with respect to the surface measure and thus confirmed Lin's conjecture for $C^1$ boundaries. Then Logunov-Malinnikova-Nadirashvili-Nazarov \cite{LMNN21} established the optimal estimate of nodal sets with $C^1$ boundaries (also see \cite{G22} for general elliptic operators). These results were finally extended in \cite{ZZ23,Cai25} to a unified class of quasiconvex Lipschitz domains that includes both $C^1$ and convex domains as proper subsets (also see the results in convex domains \cite{AEK95} or polytopes \cite{CZ24}). We also refer to \cite{KZ21,KZ22,KZ25} for the quantitative estimate of critical/singular sets in $C^{1,{\rm Dini}}$ domains and to \cite{M19} for quantitative results in convex domains. 

For the Neumann boundary value problem, the study of UCP has been stuck on $C^{1,1}$ boundaries since the work \cite{AE97}. 
In order to prove the monotonicity of frequency function or Carleman estimate under the vanishing Neumann boundary condition, we need the boundary to be exactly ``$A$-flat'' (i.e., $n\cdot A(x-x_0) = 0$), which is much more restrictive than being $A$-starshape. For $C^{1,1}$ boundaries, Adolfsson-Escauriaza \cite{AE97} constructed a special transformation to ``$A$-flatten'' the boundary such that the monotonicity formula of frequency function is available. Unfortunately, this particular transformation crucially depends on the $C^{1,1}$ regularity of the boundary.  For boundaries less regular than $C^{1,1}$, a transformation $A$-flattening the boundary will completely destroy the Lipschitz regularity of coefficients and hence at present there is no known monotonicity formula or Carleman estimate available. We also mention some recent literature on the UCP for Neumann/Robin boundary value problems \cite{Zhu23,BZ23,LW23}, and all these works assume at least $C^{1,1}$ boundaries.

In this paper, we will establish the quantitative UCP for elliptic equations subject to the vanishing Neumann boundary condition in the planar $C^{1,\alpha}$ domains. To the best of our knowledge, this should be the first quantitative study of UCP for Neumann problem with a boundary less regular than $C^{1,1}$. Our main results are restricted in two dimensions as we will employ special techniques available only in two dimensions. The questions on the quantitative UCP in higher dimensions for Neumann problems in less regular domains deserve more explorations.

\subsection{Statement of main results}
Consider the second-order elliptic operator defined by $\mathcal{L}=-\nabla \cdot A\nabla$ with $A$ being a positive-definite matrix in a planar domain $\Omega \subset \R^2$. A function $u\in H^1(\Omega)$ is called $A$-harmonic in $\Omega$ if $\mathcal{L}(u)=0$ in the weak sense in $\Omega$. We say $u$ satisfies the vanishing Neumann boundary condition on $\Gamma \subset \partial \Omega$ if $\frac{\partial u}{\partial \nu} := n\cdot A\nabla u = 0$ in the weak sense on $\Gamma$, where $n$ is the outer normal to $\partial \Omega$.
In this paper, we assume that the coefficient matrix $A$ satisfies the following standard assumptions:

\begin{itemize}
    \item Ellipticity: there exists $\Lambda\ge 1$ such that 
    \begin{equation}\label{ellipticity}
        \Lambda^{-1}\mathbin{I}\le A(x)\le \Lambda\mathbin{I} \qquad \text{ for any } x\in {\Omega}.
    \end{equation}
    \item Symmetry:
    \begin{equation}\label{symmetry}
        A^T = A.
    \end{equation}

    \item $C^\alpha$-H\"{o}lder continuous, i.e., for some $\alpha \in (0,1)$ and $\gamma \ge 0$
    \begin{equation}\label{A-Holder}
    |A(x) - A(y)| \le \gamma |x-y|^\alpha \qquad \text{for every } x,y \in \Omega.
    \end{equation}
    
\end{itemize}

It is well-known that the quasiconformal mapping is a standard tool to study the elliptic equation in two dimensions. Actually, even with bounded measurable coefficients, the strong unique continuation property (SUCP) holds at interior points and boundary points (reduced to the interior case); see e.g. \cite{Sch98}. The problem is that, without any additional regularity or structure, the complex analysis tools alone do not give useful quantitative estimates, due to the degenerate distortion of the quasiconformal mapping (see \cite{AIM09} and Subsection \ref{subsec:quasiconformal} for more details). Therefore, the generality of our elliptic operator $\cL$ prevents us from using complex analysis tools alone to circumvent the essential difficulties to obtain the (optimal) quantitative estimates.

Our first main result is a quantitative upper bound of the number of critical points, in terms of the doubling index that characterizes the growth rate of a solution.

\begin{theorem}\label{thm.criticalpts}
    Let $A$ satisfy \eqref{ellipticity} - \eqref{A-Holder} in a bounded $C^{1,\alpha}$ planar domain $\Omega$ with some $\alpha \in (0,1)$. Let $0\in \partial \Omega$ and $u$ be $A$-harmonic (and nontrivial) in $B_R \cap \Omega$ subject to the vanishing Neumann boundary condition on $B_R \cap \partial \Omega$, where $B_R := B_R(0)$. Then there exist constants $r_0, C$ and $\theta \in (0,1/2)$, depending only on $A$ and $\Omega$, such that if $R\in (0,r_0)$, then
    $u$ has a finite number of critical points and
    \begin{equation}\label{est.CuSharp}
        \# (\mathcal{C}(u)\cap B_{\theta R}) \le C\log \frac{\| \nabla u \|_{L^2(B_R \cap \Omega)}}{ \| \nabla u \|_{L^2(B_{\theta R} \cap \Omega ) }  },
    \end{equation}
    where $\#$ denotes the counting measure and $\mathcal{C}(u) = \{ x\in \overline{\Omega}: \nabla u(x) = 0 \}$. 
   \end{theorem}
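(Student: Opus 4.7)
The plan is to implement the duality advertised in the abstract: associate to $u$ its two-dimensional stream function $v$, which will satisfy an elliptic equation of the same type but with \emph{vanishing Dirichlet} data on $\partial\Omega\cap B_R$, and then invoke the quantitative theory available for the Dirichlet problem in planar $C^{1,\alpha}$ domains. For $R$ small enough the $C^{1,\alpha}$ regularity of $\partial\Omega$ makes $B_R\cap\Omega$ simply connected, so the divergence-free field $A\nabla u$ admits a potential: there exists $v\in H^1(B_R\cap\Omega)$, unique up to an additive constant, with $\nabla v = J A\nabla u$, where $J$ denotes the rotation by $\pi/2$. Using $\mathrm{curl}(\nabla u)=0$ one checks that $\nabla\cdot(\widetilde A\,\nabla v)=0$ with $\widetilde A := A/\det A$, which again satisfies \eqref{ellipticity}--\eqref{A-Holder} with constants depending only on those of $A$.

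The vanishing Neumann condition $n\cdot A\nabla u=0$ on $\partial\Omega\cap B_R$ says that $A\nabla u$ is tangent to the boundary; via the identity $A\nabla u = J^{T}\nabla v$ this is equivalent to $\nabla v$ being parallel to $n$, i.e., the tangential derivative of $v$ along the arc vanishes. Hence $v$ is constant on $\partial\Omega\cap B_R$, and after subtracting that constant $v$ satisfies a vanishing Dirichlet condition there. Because $A(x)$ is invertible and $J$ is orthogonal, $|\nabla u(x)| \asymp |\nabla v(x)|$ pointwise with constants depending only on $\Lambda$. Consequently $\mathcal{C}(u)=\mathcal{C}(v)$, and $\|\nabla v\|_{L^2(B_\rho\cap\Omega)}\asymp\|\nabla u\|_{L^2(B_\rho\cap\Omega)}$ for every $\rho>0$, so the doubling quotients of $u$ and $v$ agree up to a bounded multiplicative constant. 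This reduces Theorem~\ref{thm.criticalpts} to its Dirichlet analogue for $v$.

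The main work, and the expected principal obstacle, is to prove that Dirichlet analogue: for $\widetilde A$-harmonic $v$ in $B_R\cap\Omega$ with $v=0$ on $B_R\cap\partial\Omega$,
\begin{equation*}
\#(\mathcal{C}(v)\cap B_{\theta R})\le C\log\frac{\|\nabla v\|_{L^2(B_R\cap\Omega)}}{\|\nabla v\|_{L^2(B_{\theta R}\cap\Omega)}}.
\end{equation*}
I would proceed in two stages. First, the Adolfsson--Escauriaza change of variables restoring the $A$-starshape condition in the $C^{1,\alpha}$ setting \cite{AE97} yields a monotonicity formula and a uniform doubling bound for $v$, and after this change of variables the boundary arc becomes locally flat relative to the transformed operator. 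Second, in two dimensions the critical points can be counted by complex-analytic means: a Bers--Nirenberg factorization represents a suitable complex combination of $\partial_1 v$ and $\partial_2 v$ as a non-vanishing factor times a pseudo-holomorphic function, so that critical points of $v$ become zeros of the latter and can be counted topologically via the argument principle, with the total increment of argument along $\partial B_{\theta R}$ controlled by the doubling index through standard propagation of smallness. The delicate points that I expect to consume the bulk of the technical work are (i)~propagating the pseudo-holomorphic/quasiconformal machinery uniformly up to the $C^{1,\alpha}$ boundary, where the coefficient is only Hölder and (as noted in the discussion preceding the theorem) the plain quasiconformal bounds are borderline, and (ii)~handling critical points that might accumulate near the boundary arc itself, on which the Dirichlet zero set of $v$ already lives.
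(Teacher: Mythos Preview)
Your reduction to the stream function $v$ is exactly the paper's first move, and your observations that $\mathcal{C}(u)=\mathcal{C}(v)$ and $\|\nabla u\|_{L^2}\asymp\|\nabla v\|_{L^2}$ are correct and used verbatim.

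The second stage, however, has a genuine gap. Your plan to count critical points of $v$ by applying a Bers--Nirenberg similarity principle to a complex combination of $\partial_1 v,\partial_2 v$ and then the argument principle is the approach of \cite{Han07,HLbook,Zhu23b}, and it requires $A$ (hence $\widetilde A$) to be \emph{Lipschitz}: the first-order system satisfied by $\nabla v$ has lower-order coefficients involving $\nabla\widetilde A$, and the similarity factorization $g=e^{s}\,\Phi$ with $\Phi$ holomorphic needs those coefficients bounded. Under the hypotheses \eqref{ellipticity}--\eqref{A-Holder} the matrix is only $C^\alpha$, so $\nabla\widetilde A$ is merely a distribution and the factorization is unavailable. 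This is not a technicality you can patch with the Adolfsson--Escauriaza change of variables: that map is only $C^{1,\alpha}$, so the transformed coefficients remain $C^\alpha$.

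The paper sidesteps differentiating altogether. After a $C^{1,\alpha}$ boundary flattening and an odd reflection of $v$ across $\{x_2=0\}$ (which preserves $C^{1,\alpha}$ regularity of $v$ because of the Dirichlet condition, even though the reflected coefficient is discontinuous across the interface), one writes $\tilde v = w\circ\chi$ with $w$ harmonic and $\chi$ merely quasiconformal. The key lemma (Lemma~\ref{lem.Geo.CP}, after \cite{Ale87,AM94}) is that for an $\widetilde A$-harmonic function with $C^\alpha$ coefficients, $x_0$ is a critical point if and only if at least two level curves meet at $x_0$. This characterization is purely topological and is preserved by the homeomorphism $\chi$, so interior critical points of $\tilde v$ are carried to critical points of the harmonic $w$, whose count is classical. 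For critical points of $\tilde v$ on $\{x_2=0\}$ (where the coefficient is discontinuous and the lemma does not apply directly), the paper uses that $\{x_2=0\}$ is itself a level curve, and Hopf's lemma on the $C^{1,\alpha}$ boundary forces a second nodal arc to emanate into the upper half-disk; hence these points are again mapped by $\chi$ to critical points of $w$. This geometric mechanism is precisely what resolves your anticipated difficulties~(i) and~(ii), and it is the step your proposal is missing.
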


The above estimate \eqref{est.CuSharp} is optimal, in view of the classical interior results \cite{Han07, HLbook,Zhu23b} in two dimensions, where at least Lipschitz regularity of the leading coefficients is required. The $C^{1,\alpha}$ boundary may be relaxed to $C^{1, {\rm Dini}}$ by checking the proof (also see \cite{KZ25} for the Dirichlet problem of Laplace operator). However, the number of boundary critical points may be infinite if the boundary is $C^1$ or convex (see counterexample in \cite{KZ22} for Dirichlet problem), in contrast to the following results on the estimate of doubling index and the length of level curves. The failure for the convex domain is not surprising, as each convex cone will generate a critical point at the tip and therefore the total number of these boundary critical points may accumulate to infinity. Nevertheless, the Hausdorff dimension of boundary critical set in this case is still zero; see \cite{M19}. Also see Remark \ref{rmk.intCP} for the interior result off the boundary.

Our second result is the optimal upper bound of doubling index in a neighborhood of the boundary.
\begin{theorem}\label{thm.Doubling}
    Let $\Omega$ be either a bounded $C^{1,\alpha}$ or convex domain in $\R^2$. 
    Let $A$ satisfy \eqref{ellipticity}, \eqref{symmetry} and
    \begin{itemize}
        \item Lipschitz continuity: there exists $\gamma \ge 0$ such that  
    \begin{equation}\label{Lip}
        | A(x)-A(y) |\le \gamma |x-y| \qquad \text{ for every } x,y \in {\Omega}.
    \end{equation}
    \end{itemize}
    Let $u$ be as before. Then there exist $r_0, C$ and $\theta \in (0,1/2)$ such that if $R\in (0,r_0)$,
    \begin{equation}\label{est.UnifDoubling}
        \sup_{\substack{x\in B_{\theta R} \cap \Omega\\ 0<r< \theta R} } N_u(x,r) \le C \log \frac{\| u \|_{L^2(B_{R} \cap \Omega)}}{\| u \|_{L^2(B_{\theta R} \cap \Omega)}},
    \end{equation}
    where the doubling index $N_u(x,r)$ is defined in Subsection \ref{subsec:DefDoubling}.
\end{theorem}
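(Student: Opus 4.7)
The strategy is to exploit the classical stream-function duality, which is special to two dimensions, in order to convert the Neumann $A$-harmonic function $u$ into a Dirichlet $\tilde A$-harmonic function $v$, and then invoke the already-known doubling-index estimates for Dirichlet solutions in $C^{1,\alpha}$ or convex planar domains.

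Since $u$ is $A$-harmonic in the (simply connected, for small $R$) set $B_R \cap \Omega$, the vector field $A\nabla u$ is divergence-free and therefore admits a potential $v$, the \emph{stream function}, satisfying $\nabla v = JA\nabla u$ with $J$ the rotation by $\pi/2$. Inverting gives $\nabla u = -A^{-1} J \nabla v$, and the identity $\operatorname{curl}(\nabla u) = 0$ rewrites as
\[
    \nabla \cdot (\tilde A \nabla v) = 0, \qquad \tilde A := \frac{A}{\det A}.
\]
Under \eqref{ellipticity}, \eqref{symmetry} and \eqref{Lip}, $\tilde A$ inherits symmetry, ellipticity and Lipschitz continuity (the last because $\det A \ge \Lambda^{-2}$ by ellipticity). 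The Neumann condition $n \cdot A\nabla u = 0$ is equivalent to $\partial_\tau v = 0$ along $\partial \Omega$, so after subtracting a constant $v$ vanishes on $B_R \cap \partial \Omega$.

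At this point $v$ is a Dirichlet solution to a symmetric, elliptic, Lipschitz divergence-form equation in a $C^{1,\alpha}$ or convex planar domain, so the uniform doubling-index estimate
\[
\sup_{\substack{x \in B_{\theta R} \cap \Omega \\ 0 < r < \theta R}} N_v(x,r) \le C \log \frac{\|v\|_{L^2(B_R \cap \Omega)}}{\|v\|_{L^2(B_{\theta R} \cap \Omega)}}
\]
is already in the literature: see \cite{AE97, KN98} in the $C^{1,\alpha}$ case, and \cite{AEK95} (as refined in \cite{ZZ23, Cai25}) in the convex or more general quasiconvex case.

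The final step is to transfer this estimate from $v$ back to $u$. Pointwise $|\nabla v| = |JA\nabla u| \asymp |\nabla u|$ with constants depending only on $\Lambda$, so the gradient $L^2$ norms of $v$ and $u$ agree up to constants on every measurable subset, which already yields the gradient form of \eqref{est.UnifDoubling}. To reach the stated $L^2$ form, one combines the Dirichlet Poincar\'e inequality for $v$ on $B_r \cap \Omega$ (legitimate because $v = 0$ on $B_r \cap \partial \Omega$) with a Caccioppoli-type reverse Poincar\'e inequality for $u$ after subtracting its mean over an appropriate set; the uniform $C^{1,\alpha}$ or convex geometry of $\partial \Omega$ controls all the Poincar\'e constants across scales. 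The main technical obstacle is precisely this last transfer: since $u$ is defined only modulo a constant, one must fix a single normalization of $u$ and show uniformly in $r \in (0,\theta R)$ that its $L^2$ and gradient $L^2$ doubling ratios are comparable, so that the Dirichlet doubling estimate for $v$ translates cleanly into the Neumann doubling estimate \eqref{est.UnifDoubling} for $u$.
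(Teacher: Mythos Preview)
Your strategy---pass to the stream function $v$, invoke known Dirichlet doubling estimates, and transfer back via $|\nabla v|\asymp|\nabla u|$---is exactly the route the paper takes. However, your write-up stops precisely where the real work begins: you correctly flag the normalisation issue for $u$ as ``the main technical obstacle'' but do not resolve it, and the sentence ``show uniformly in $r$ that its $L^2$ and gradient $L^2$ doubling ratios are comparable'' is not something one can simply verify---indeed it is false in general for Neumann solutions (take $u=1+\varepsilon w$). The paper closes this gap by a case split: if $u$ has a zero in $B_{R/8}\cap\overline\Omega$ then a Poincar\'e/Caccioppoli comparison (Lemma~\ref{lem.doubling.Du<u}) gives $N_{\nabla u}(0,R/2)\le C N_u(0,R)$; if $u$ has no zero there then $u$ is signed and the boundary Harnack inequality (after flattening and even reflection) forces $N_u(x,r)\le C$ directly, while the right-hand side of \eqref{est.UnifDoubling} is bounded below by a universal constant (Proposition~\ref{prop.lowbd.doubling}). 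Without this dichotomy your argument is incomplete.

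There is also a subtler architectural difference. You propose to quote a uniform Dirichlet bound for $N_v$ over \emph{all} $x\in B_{\theta R}\cap\Omega$ and then transfer. The paper instead uses only the \emph{boundary} almost-monotonicity of $\widehat N_v$ from \cite{AE97,ZZ23}, upgrades it to boundary almost-monotonicity of $N_{\nabla u}$ (Proposition~\ref{prop.DvDu.AlmMnty}), and then pushes the estimate to interior points of $u$ directly, via a quasiconformal/even-reflection argument (Proposition~\ref{prop.IntDoubling} and Lemma~\ref{lem.bdry-to-interior}) that tolerates the coefficient discontinuity created by reflecting a Neumann solution. Your alternative---run the interior-to-boundary linking on the Dirichlet side for $v$, where odd reflection keeps $v$ in $C^{1,\alpha}$---is viable and arguably cleaner, but you should be explicit that this is what you are doing rather than citing it as a black box.
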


The uniform boundedness of doubling index on the boundary has many consequences. In the following, for simplicity, we briefly list some results implied by Theorem \ref{thm.Doubling}, which will be proved rigorously (or implied directly) throughout the paper.
\begin{itemize}
    \item Boundedness of vanishing order on the boundary (well-known).

    \item Estimate of the length of level curves (or level sets); see Theorem \ref{thm.LevelSet} below.

    \item Estimate of the number of level points on the boundary (or equivalently the number of level curves touching the boundary) and Cauchy uniqueness for Neumann problem; see Theorem \ref{thm.levelpoints} and Corollary \ref{Coro.CauchyUnique}.
\end{itemize}

In particular, we establish the optimal estimate of the total length of the level curves. Let $Z_t(u) = \{ x\in \Omega: u(x) =t \}$. The nodal curve of $u$ then is given by $Z(u) = Z_0(u)$. Note that $Z_t(u) = Z_0(u -t)$.
\begin{theorem}\label{thm.LevelSet}
    Under the same assumptions of Theorem \ref{thm.Doubling}, there exists $C \ge 0$ (depending only on $A$ and $\Omega$) such that for any $t\in \R$, 
\begin{equation}\label{est.LevelSet=t}
        \mathcal{H}^1(Z_t(u) \cap B_{\theta R}) \le CR \log \frac{\| u - t\|_{L^2(B_R \cap \Omega)}}{ \| u -t \|_{L^2(B_{\theta R} \cap \Omega )}},
    \end{equation}
    where $\mathcal{H}^1$ denotes the one-dimensional Hausdorff measure.
\end{theorem}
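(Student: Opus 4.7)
The plan is to reduce the estimate for arbitrary $t$ to the nodal case ($t=0$) and then combine the uniform doubling-index bound from Theorem \ref{thm.Doubling} with a standard two-dimensional length argument. Since $u-t$ is itself $A$-harmonic in $B_R\cap\Omega$ and still satisfies the vanishing Neumann boundary condition on $B_R\cap\partial\Omega$ (the condition $n\cdot A\nabla u=0$ depends only on $\nabla u$), and since $Z_t(u)=Z(u-t)$, one may replace $u$ by $u-t$ and reduce to showing
\[
\mathcal{H}^1(Z(u)\cap B_{\theta R})\le CR\log\frac{\|u\|_{L^2(B_R\cap\Omega)}}{\|u\|_{L^2(B_{\theta R}\cap\Omega)}}
\]
for any nontrivial $A$-harmonic $u$ with the vanishing Neumann condition on $B_R\cap\partial\Omega$.

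Theorem \ref{thm.Doubling} applied to $u$ yields the uniform doubling-index bound
\[
N:=\sup_{x\in B_{\theta R}\cap\Omega,\,0<r<\theta R}N_u(x,r)\le C\log\frac{\|u\|_{L^2(B_R\cap\Omega)}}{\|u\|_{L^2(B_{\theta R}\cap\Omega)}},
\]
so it suffices to prove the linear-in-$N$ estimate $\mathcal{H}^1(Z(u)\cap B_{\theta R})\le C(N+1)R$. A Vitali-type covering of $B_{\theta R}\cap\Omega$ by balls of a common small scale $\rho$ reduces this to a local estimate $\mathcal{H}^1(Z(u)\cap B_\rho(y)\cap\Omega)\le C(N+1)\rho$ at each centre $y\in\overline{\Omega}\cap B_{\theta R}$. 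For an interior ball ($B_\rho(y)\subset\Omega$) this is the classical planar bound for $A$-harmonic functions with Lipschitz coefficients, obtainable via the Crofton identity $\mathcal{H}^1(Z(u)\cap B_\rho(y))\lesssim\int_\ell\#(Z(u)\cap\ell)\,d\ell$ together with the fact that the number of zeros of $u$ along any line is controlled by its vanishing order, which is in turn bounded by $N$.

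For balls centred at boundary points I would invoke the paper's main device: the stream function $\tilde u$ defined by $\nabla\tilde u=JA\nabla u$ (with $J$ the $\pi/2$ rotation) is $\tilde A$-harmonic with Lipschitz coefficients, and the Neumann condition $n\cdot A\nabla u=0$ forces $A\nabla u$ tangent to $\partial\Omega$, making $\tilde u$ locally constant on each boundary arc. Thus $\tilde u$ satisfies a Dirichlet condition in the same $C^{1,\alpha}$ or convex domain, and the known sharp length estimates for the Dirichlet problem \cite{LMNN21,ZZ23,Cai25} apply to $\tilde u$. The main obstacle is transferring such an estimate back to $u$, because $Z(u)$ and $Z(\tilde u)$ are distinct objects --- the two families of level curves form an orthogonal ``quasiconformal grid'' --- so the Dirichlet-side length bound for $\tilde u$ cannot be imported directly. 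The natural bridge is a coarea-type identity along regular arcs of $Z(u)$, exploiting the pointwise comparison $|\nabla u|\sim|\nabla\tilde u|$ from ellipticity, with small neighborhoods of critical points (where the identity degenerates) excised and handled separately using the critical-point count of Theorem \ref{thm.criticalpts}.
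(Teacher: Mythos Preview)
Your reduction to $t=0$ and your invocation of Theorem~\ref{thm.Doubling} to obtain the uniform bound on $N$ are both correct, and the interior part of the covering argument is fine (the paper itself quotes Alessandrini's two-dimensional estimate rather than Crofton, but either route works there). The gap is in the boundary step. You propose to pass to the stream function $\tilde u$, use the known Dirichlet-side length bounds on $Z(\tilde u)$, and then ``transfer back'' to $Z(u)$ via a coarea-type identity along $Z(u)$ together with $|\nabla u|\simeq|\nabla\tilde u|$. But a coarea identity along $Z(u)$ controls $\int_{Z(u)}|\nabla u|\,d\mathcal{H}^1$, not $\mathcal{H}^1(Z(u))$; turning one into the other would require a pointwise \emph{lower} bound on $|\nabla u|$ along $Z(u)$, which is exactly what fails near critical points and, more importantly, near the boundary where $|\nabla u|$ can be arbitrarily small on long arcs. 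Excising critical points via Theorem~\ref{thm.criticalpts} does not cure this (and that theorem is stated only for $C^{1,\alpha}$ boundaries, not convex ones). In short, the level curves of $u$ and of its stream function are genuinely unrelated objects at the level of length, and the paper explicitly flags this: ``estimate of level curves for $u$ cannot be reduced to its stream function $v$''.

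What the paper actually does near the boundary is to run the Logunov--Malinnikova--Nadirashvili--Nazarov machinery \emph{directly on $u$} under the Neumann condition. The stream function is used only upstream, to produce the almost-monotonicity of $N_u$ at boundary points (Proposition~\ref{prop.u.AlmMnty}); once that is in hand, the paper proves Neumann analogues of the two key combinatorial lemmas --- a drop-of-doubling-index alternative (Proposition~\ref{prop.DoublingDrop}) and the existence of a nodal-free subrectangle when the index is bounded (Proposition~\ref{prop.NoZeros}) --- both via the quantitative Cauchy uniqueness of Lemma~\ref{Zhuge} applied with $n\cdot A\nabla u=0$ and $|u|$ small on the boundary. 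These feed an induction on dyadic boundary rectangles (Proposition~\ref{exterior2}) that yields $\mathcal{H}^1(Z(u)\cap Q)\le C_0\,N_u^{\partial}(Q)\,\ell_Q$, after which Theorem~\ref{thm.Doubling} closes the argument. Your proposal is missing this whole boundary mechanism; the coarea bridge you sketch cannot replace it.
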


The estimates in Theorem \ref{thm.Doubling} and Theorem \ref{thm.LevelSet} are optimal, as evidenced by classical interior examples; see \cite{Ale88,L91}. The Lipschitz continuity assumption \eqref{Lip} is needed to achieve these optimal bounds on the doubling index and level curves. Moreover, Theorem \ref{thm.Doubling} and Theorem \ref{thm.LevelSet} (as well as the following Theorem \ref{thm.WithStructure}) hold also for a broad class of quasiconvex domains\footnote{Precisely, Theorem \ref{thm.Doubling} and Theorem \ref{thm.LevelSet} hold for Lipschitz domains satisfying an exterior $C^{1,\alpha}$ condition, i.e., any boundary point can be touched by a $C^{1,\alpha}$ curve from exterior. }, including both $C^{1,\alpha}$ and convex domains; see \cite{ZZ23}. For the simplicity of presentation, we will not expand the detail in this direction.

Our last theorem shows that the Lipschitz continuity assumption in Theorem \ref{thm.Doubling} and Theorem \ref{thm.LevelSet} can be relaxed to $C^\alpha$-H\"{o}lder continuity, if $A$ satisfies the constant-determinant condition. 
\begin{theorem}\label{thm.WithStructure}
Let $\Omega$ be either $C^{1,\alpha}$ or convex and let $A$ satisfy \eqref{ellipticity} - \eqref{A-Holder}. In addition, assume $\det A$ is constant. Let $u$ be as before.
Then there exist $r_0, C$ and $ \theta \in (0,1/2)$ depending only on $A$ and $\Omega$ such that \eqref{est.UnifDoubling} and \eqref{est.LevelSet=t} hold for any $t\in \R$.
\end{theorem}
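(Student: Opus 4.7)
The plan is to combine the stream-function reduction (as in the proof of Theorem \ref{thm.Doubling}) with an additional isothermal change of coordinates that reduces the problem to Laplace's equation---the latter step being available precisely because $\det A$ is constant.

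First I would introduce the stream function $v$ locally in $B_R\cap\Omega$ via $A\nabla u = J\nabla v$, where $J$ is the $\pi/2$ rotation matrix. The vanishing Neumann condition $n\cdot A\nabla u=0$ becomes $\partial_\tau v=0$ on $B_R\cap\partial\Omega$, so $v$ is locally constant on the boundary and we normalize it to vanish. A direct $2\times 2$ computation using only the symmetry of $A$ yields the identity $J^T A^{-1}J=(\det A)^{-1}A$; hence, under the constant-determinant hypothesis, $v$ satisfies $\cL v=0$ together with the vanishing \emph{Dirichlet} condition on $B_R\cap\partial\Omega$. This mirrors the reduction behind Theorem \ref{thm.Doubling}, but now without any Lipschitz assumption on $A$.

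Next I would remove the Hölder coefficient by passing to isothermal coordinates. Since $A$ is symmetric, uniformly elliptic, and $C^\alpha$-Hölder, the classical Korn-Lichtenstein theorem (equivalently, the Hölder regularity theory for the Beltrami equation with $C^\alpha$ coefficient; see \cite{AIM09}) produces a $C^{1,\alpha}$ diffeomorphism $\Phi$ from a neighborhood of $0$ in $\overline\Omega$ onto a planar domain $\Omega'$, such that the pushforward of $\cL$ has the conformal form $\nabla\cdot(\lambda\nabla\cdot)$ for some scalar function $\lambda$. The transformation rule for divergence-form coefficients combined with $\det A$ constant then forces $\lambda$ itself to be constant, so $\tilde v:=v\circ\Phi^{-1}$ is genuinely harmonic on $\Omega'$, with vanishing Dirichlet data on $\Phi(B_r\cap\partial\Omega)$. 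In the $C^{1,\alpha}$ case, $\partial\Omega'$ is $C^{1,\alpha}$; in the convex case, although convexity need not survive $\Phi$, the image still belongs to the quasiconvex Lipschitz framework of \cite{ZZ23}, which is sufficient for what follows.

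With $\tilde v$ harmonic and vanishing-Dirichlet on a $C^{1,\alpha}$ (or quasiconvex Lipschitz) planar domain, the classical Dirichlet doubling-index and level-curve length bounds (\cite{AE97,LMNN21,ZZ23,Cai25}) apply directly. Transferring them back through the bi-$C^{1,\alpha}$ map $\Phi$ costs only multiplicative constants and yields the analogous bounds for $v$. The doubling index for $u$ then follows from $|\nabla u|\asymp|\nabla v|$ (by ellipticity of $A$ and $A\nabla u=J\nabla v$) together with a Poincaré inequality passing between $\|\nabla u\|_{L^2}$ and $\|u-c\|_{L^2}$ for an appropriate normalization constant $c$. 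For the level set estimate \eqref{est.LevelSet=t}, applying the same isothermal change of variables directly to $u-t$ makes it harmonic on $\Omega'$, and the length bound for $\{\tilde u = 0\}$ transfers verbatim. The main technical obstacle is to verify that $\Phi$ extends $C^{1,\alpha}$-regularly up to $B_R\cap\partial\Omega$ and that the image boundary falls into the scope of the cited Dirichlet-Laplace results; this is standard for $C^{1,\alpha}$ boundaries, and reduces, in the convex case, to checking quasiconvexity of the image in the sense of \cite{ZZ23}.
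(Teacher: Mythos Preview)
Your proposal is correct and relies on the same two ingredients as the paper: the $C^{1,\alpha}$ isothermal diffeomorphism available because $\det A$ constant forces $\nu=0$ in the Beltrami equation (so the quasiconformal map upgrades to a $C^{1,\alpha}$ diffeomorphism), and the observation that in the convex case the image domain is quasiconvex in the sense of \cite{ZZ23}. The only difference is the order of the reductions: the paper applies the isothermal map $\Psi$ directly to $u$, obtaining a harmonic function with vanishing Neumann data on $\Omega'=\Psi(\Omega)$, and then simply invokes Theorems~\ref{thm.Doubling} and~\ref{thm.LevelSet} with $A=I$ (which is trivially Lipschitz); this handles both \eqref{est.UnifDoubling} and \eqref{est.LevelSet=t} in one stroke and spares you the separate stream-function-then-isothermal route for the doubling index and the pull-back from $N_v$ to $N_{\nabla u}$ to $N_u$.
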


The assumption ``$\det(A)$ is constant'' allows us to convert $u$ into a harmonic function via a $C^{1,\alpha}$ diffeomorphism (instead of a $C^\alpha$ quasiconformal mapping).
Furthermore, this extra assumption arises naturally when we consider the Laplace-Beltrami equation in a two-dimensional Riemannian manifold. Recall that the Laplace-Beltrami operator can be locally written as
\begin{equation*}
    \Delta_{\mathrm{g}} u = \frac{1}{\sqrt{g}} \partial_i ( \sqrt{g} g^{ij} \partial_j u ),
\end{equation*}
where $(g^{ij})$ is the inverse of the Riemann metric $(g_{ij})$ and $g = \det (g_{ij})$. Therefore, with $a_{ij} = \sqrt{g} g^{ij}$ and $d = 2$, we have $\det(a_{ij}) = 1$.
Thus, Theorem \ref{thm.WithStructure} applies to this situation. 

\subsection{Main ideas of proof}
The main difficulty for the Neumann problem is that we are unable to directly establish the monotonicity formula for the Almgren-type frequency function if the boundary is merely $C^{1,\alpha}$.
The key idea of the paper (for all four theorems) is to reduce the Neumann boundary value problem to the Dirichlet boundary value problem by a classical duality between an $A$-harmonic function and its stream function  (a generalization of the conjugate harmonic function), for which we have a better understanding from the previous literature. Precisely, for an $A$-harmonic function $u$ in $\Omega\subset \R^2$ , the stream function is given by
\begin{equation}\label{eq.Stream}
    \nabla v=\mathcal{R} A\nabla u,
\end{equation}
where $\mathcal{R}$ is the matrix for a $90^\circ$ counter-clockwise rotation in the plane:
\begin{equation*}
    \mathcal{R} = \begin{pmatrix}
 0 &-1 \\1
  &0
\end{pmatrix}.
\end{equation*}
If $A$ is symmetric, then $v$ satisfies $\nabla\cdot (\widehat{A} \nabla v) = 0$ and $\widehat{A} = (\det A)^{-1} A$ is also symmetric. Moreover, if $u$ satisfies the vanishing Neumann boundary condition on $\partial \Omega \cap B_R$, then    
\begin{equation*}
    n^\perp \cdot \nabla v = n\cdot A \nabla u = 0, \quad \text{on } \partial \Omega \cap B_R,
\end{equation*}
where $n^\perp = (n_2, -n_1)$ and $n = (n_1, n_2)$. This means that the tangential derivative of $v$ vanishes along the boundary $\partial \Omega \cap B_R$. Thus $v$ remains constant on $\partial \Omega \cap B_R$. Without loss of generality, we may assume $v = 0$ on $\partial \Omega \cap B_R$. Therefore, the stream function $v$ is $\widehat{A}$-harmonic and satisfies the vanishing Dirichlet boundary condition. The point is that, by the equation \eqref{eq.Stream}, any quantitative estimates we have for $v$, can be carried over from $\nabla v$ to $\nabla u$ without any essential loss. We emphasize that the duality of boundary conditions between a harmonic function or an incompressible flow and its stream function is classical with many applications in two dimensional PDEs in different contexts (e.g. \cite{BV02,Mat02,ZY25}), particularly in fluid mechanics from which the term ``stream function'' arises (see e.g. \cite{Tem79}). Nevertheless, to the best of our knowledge, this is the first time it is used to study the UCP of the Neumann boundary value problem. 

\textbf{Critical points.} After reducing the Neumann boundary value problem to the Dirichlet boundary value problem, we then apply a geometric approach inspired by \cite{AM94} to estimate the critical points, which is quite different from the ones in recent literature such as \cite{NV17,Zhu23b,KZ25}, etc. The key idea of the geometric approach is the following observation: a (interior or boundary) point $x_0$ is a critical point if and only if it is an intersection of at least two level curves. This fact holds on the boundary for the vanishing Dirichlet boundary condition, but not for the Neumann boundary condition. Fortunately, by \eqref{eq.Stream}, the critical points of $u$ (including those on the boundary) are exactly the critical points of its stream function satisfying the vanishing Dirichlet boundary condition. 

\textbf{Doubling index.} Again, the almost monotonicity and the upper bound of doubling index of $u$ are reduced to the doubling index of $v$ through \eqref{eq.Stream}, while the latter has been studied in \cite{AE97} for $C^{1,\alpha}$ domains and in \cite{ZZ23} for convex (quasiconvex) domains. Since \eqref{eq.Stream} only shows the equivalence between gradients, a few transitions are needed to study the relation of  the doubling index of $v$ to that of $u$.

\textbf{Length of level curves.} The estimate of the total length of level curves follows the approach recently developed in \cite{LMNN21} and \cite{ZZ23}, based on the almost monotonicity and the upper bound of doubling index. Note that estimate of level curves for $u$ cannot be reduced to its stream function $v$, as \eqref{eq.Stream} does not show the connection between their level curves. Instead, we have to adjust the argument, previously developed for the Dirichlet boundary value problem, to the Neumann boundary value problem, with some modifications. Meanwhile, we also need the result on the optimal estimate of interior nodal curves for the two dimensional elliptic equations in \cite{Ale88}.

\textbf{Organizations.} 
In Section \ref{sec:prelim}, we recall the quasiconformal mapping applied to the two dimensional elliptic equations and use it to derive a property on the propagation of doubling index from the boundary to the interior. In Section \ref{sec:criticalpoint}, we prove Theorem \ref{thm.criticalpts} on the number of critical points. In Section \ref{sec:DoublingIndex}, we establish the almost monotonicity and the upper bound of the doubling index, including the proof of Theorem \ref{thm.Doubling}.
In Section \ref{sec:levelset}, we obtain the  length estimate of level curves in Theorem \ref{thm.LevelSet} and its consequence on the Cauchy uniqueness for Neumann problem. In Section \ref{sec:detA=1}, we prove Theorem \ref{thm.WithStructure}.

\textbf{Notations.} The constant $C,c,$ etc are allowed to depend on $A$ and $\Omega$ and may vary from line to line. We write $a \simeq b$ if there exists $C>0$ such that $C^{-1} a \le b \le C a$. We use $B_r(x) = B(x,r)$ to denote the Euclidean ball centered at $x$ with radius $r$, and $B_r$ to denote the ball centered at the origin.

\textbf{Acknowledgments.}
The frist author would like to thank Professor Xavier Tolsa and Professor Liqun Zhang for helpful discussions.
The third author would like to thank Professor Zhongwei Shen for many insightful discussions on this topic. Y. Cai is partially supported by the European Research Council (ERC) under the innovation program (grant agreement 101018680).  J. Zhu is partially supported by  NSF DMS-2154506 and DMS-2453348.
J. Zhuge is partially  supported by NNSF of China (No. 12494541, 12288201, 12471115).

\section{Preliminaries} \label{sec:prelim}

The application of quasiconformal mapping to the second-order elliptic equation in a plane is classical; see e.g. \cite[Chapter 16]{AIM09} and \cite{BJS64,AE08,KZZ22, MZ25}. We provide a brief introduction for clarification. We also provide a technical proposition on the propagation of doubling index from the boundary to the interior.

\subsection{Quasiconformal mapping}
\label{subsec:quasiconformal}
Let $A$ be a $2\times 2$ real-valued matrix given by
\begin{equation*}
    A = \begin{pmatrix}
 a_{11} & a_{12} \\
 a_{21} & a_{22}
\end{pmatrix}.
\end{equation*}
We assume that $A$ is bounded measurable and satisfy the ellipticity condition. We do not need the regularity assumption of $A$ in this subsection. 
Let $u\in H^1(\Omega)$ be an $A$-harmonic function (i.e., $\nabla \cdot (A\nabla u) = 0$) in $\Omega$. Let $v$ be the stream function given by \eqref{eq.Stream}. 

We identify the point $(x_1,x_2) \in \R^2$ as a complex point $z=x_1+ix_2$, and thus write $u(z) = u(x_1, x_2)$. Consider the complex function $f=u+iv$. Then it is straightforward to verify that $f$ satisfies the complex Beltrami equation
\begin{equation}\label{eq-dzf}
    \partial_{\bar{z}}f =\mu \partial_z f+ \nu \overline{\partial_z f} \quad \text{ in }\Omega \subset \C \simeq \R^2,
\end{equation}
where $\partial_{\Bar{z}}f=\frac{1}{2}(\partial_{x_1} f+i\partial_{x_2} f)$, $\partial_z f=\frac{1}{2}(\partial_{x_1} f-i\partial_{x_2} f)$, and
\begin{equation}\label{def.mu+nu}
\left\{ 
\begin{aligned}
    & \mu=\frac{a_{22}-a_{11}-i(a_{12}+a_{21}) }{\det(I + A)}, \\
    & \nu=\frac{1-\text{det}(A) + i(a_{12} - a_{21})}{\det(I + A)}. 
\end{aligned}
\right.
\end{equation}
By the ellipticity condition on $A$, we know that $\mu$ and $\nu$ are bounded measurable and
\begin{equation*}
    |\mu| + |\nu| \le \kappa := \frac{\Lambda-1}{\Lambda+1} < 1.
\end{equation*}
Hence \eqref{eq-dzf} can be written as $\partial_{\bar{z}} f = \tilde{\mu} \partial_z f$ with $|\tilde{\mu}| \le \kappa < 1$. This implies that $f$ is a quasiregular mapping. 

As an important consequence, we can apply the Stoilow Factorization Theorem \cite{AIM09} to write $f$ as
\begin{equation*}
    f(z) = g\circ \chi(z),
\end{equation*}
where $\chi:\Omega \to \Omega'$ is a $\Lambda$-quasiconformal mapping and $g$ is a holomorphic function. Moreover, by taking the real part of $f$, we get
\begin{equation*}
    u = h\circ \chi,
\end{equation*}
where $h$ is a harmonic function on $\Omega' \subset \R^2$.

Particularly, we may apply the above argument to the $A$-harmonic functions on the disk $B_1$. In fact, by \cite{BJS64}, if $u$ is $A$-harmonic in $B_1$, then there exists a $\Lambda$-quasiconformal mapping $\chi: B_1 \to B_1$ with $\chi(0) = 0$ and $\chi(1) = 1$  
such that $u = h\circ \chi$, where $h$ is harmonic in $B_1$. Moreover, for any $z_1, z_2 \in B_1$,
\begin{equation}\label{distortion}
    C^{-1} |z_1 - z_2|^{1/\sigma} \le |\chi(z_1) - \chi(z_2)| \le C|z_1 - z_2|^\sigma,
\end{equation}
where $C$ and $\sigma\in (0,1)$ are positive constants depending only on $\Lambda$. By rescaling, we can obtain similar result for $A$-harmonic functions in $B_R$ for any $R>0$.

Let $w$ be $A$-harmonic in $B_R$ and $w = h\circ \chi$ for some $\Lambda$-quasiconformal mapping $\chi: B_R \to B_R$. Define the quasi-ball 
$$\mathcal{B}_{r}=\{ z\in B_{r}:|\chi(z) |<r \}.$$
By \eqref{distortion}, the quasi-balls $\mathcal{B}_r$ are comparable to the standard Euclidean balls in the sense 
\begin{equation}\label{quasiball}
    \mathcal{B}_{R}=B_{R}\quad \text{and}\quad B_{R(\frac{r}{C_1R})^{1/\sigma}} \subset \mathcal{B}_r\subset B_{R(\frac{C_1r}{R})^\sigma}, \quad \text{for all }\, 0< r<R,
\end{equation}
where $C_1\ge 1$ and $0<\sigma<1$ depend only on $\Lambda$. Then by Hadamard’s three circle theorem, we have the following doubling property in quasi-balls.
\begin{lemma}[{\cite[Proposition 2]{AE08}}]\label{lem.quasiball}
    Let $w$ be a nontrivial $A$-harmonic function in $B_{R}$. Then there exists $C>0$ such that
    \begin{equation}\label{est-quasiball}
     \frac{\norm{w}_{L^\infty(\mathcal{B}_r)}}{\norm{w}_{L^\infty(\mathcal{B}_{r/2})}}\le C\frac{\norm{w}_{L^\infty(\mathcal{B}_{R})}}{\norm{w}_{L^\infty(\mathcal{B}_{R/4})}},\quad \text{ for all }  \; 0<r\le R. 
     \end{equation}
  \end{lemma}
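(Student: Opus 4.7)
The strategy is to lift the problem to the Euclidean setting via the Stoilow factorization $w = h\circ\chi$, where $\chi : B_R \to B_R$ is a $\Lambda$-quasiconformal homeomorphism with $\chi(0)=0$ and $h$ is harmonic on $B_R$ (as developed in Subsection \ref{subsec:quasiconformal}). The quasi-balls $\mathcal{B}_\rho$ are designed precisely so that $\chi$ maps them onto the concentric Euclidean balls $B_\rho$; hence
\begin{equation*}
    \|w\|_{L^\infty(\mathcal{B}_\rho)} = \|h\circ\chi\|_{L^\infty(\mathcal{B}_\rho)} = \|h\|_{L^\infty(B_\rho)} =: M_h(\rho),
\end{equation*}
and the desired inequality \eqref{est-quasiball} reduces to a doubling property for $M_h$ on concentric Euclidean balls for the \emph{harmonic} function $h$.

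Next I would invoke the classical Hadamard three-circle theorem: for a harmonic function $h$ on $B_R$, the map $\rho \mapsto \log M_h(\rho)$ is convex in $\log \rho$ on $(0,R]$. Equivalently, the doubling quantity $\log[M_h(\rho)/M_h(\rho/2)]$ is nondecreasing in $\rho$. For $r \in (0, R/2]$, this monotonicity immediately gives
\begin{equation*}
    \log \frac{M_h(r)}{M_h(r/2)} \le \log \frac{M_h(R/2)}{M_h(R/4)} \le \log \frac{M_h(R)}{M_h(R/4)},
\end{equation*}
where the last step uses $M_h(R/2) \le M_h(R)$. For the remaining range $r \in (R/2, R]$, the inequality is immediate from monotonicity of $M_h$ alone, since $M_h(r) \le M_h(R)$ and $M_h(r/2) \ge M_h(R/4)$. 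Combining the two ranges and translating back via the norm identification above yields \eqref{est-quasiball}, in fact with the sharp constant $C = 1$; any larger $C$ allowed in the statement simply absorbs the mild distortion factors that may arise from the quasi-ball inclusions \eqref{quasiball}.

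The main conceptual point, and the only place where a slight care is needed, is the clean identification $\|w\|_{L^\infty(\mathcal{B}_\rho)} = M_h(\rho)$, which is built into the very definition of the quasi-ball. Since $\chi$ is a homeomorphism of $B_R$ fixing the origin, the preimage $\chi^{-1}(B_\rho)$ is a topological disk around $0$; the distortion estimate \eqref{distortion} ensures that it is comparable in the Hölder sense to the Euclidean disk $B_\rho$, as recorded in \eqref{quasiball}, and the quasi-ball $\mathcal{B}_\rho$ coincides with this preimage up to scale $\rho$. Once this identification is verified, the remainder of the argument is a direct consequence of the convexity of $\log M_h$ in $\log \rho$, and the proof is complete.
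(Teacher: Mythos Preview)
Your approach matches the paper's own one-line justification (the paper does not prove the lemma; it cites \cite{AE08} and writes only ``by Hadamard's three circle theorem''), and the reduction $\|w\|_{L^\infty(\mathcal B_\rho)}=\|h\|_{L^\infty(B_\rho)}$ via $\chi$ is exactly the right mechanism.

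One point needs care, however. The statement ``$\rho\mapsto\log M_h(\rho)$ is convex in $\log\rho$'' is Hadamard's three-circle theorem for \emph{holomorphic} functions. For a merely harmonic $h$, the subharmonicity of $|h|$ gives only that $M_{|h|}(\rho)$ itself (not its logarithm) is convex in $\log\rho$; this is an \emph{additive} three-circle inequality and does not directly yield the monotonicity of $\rho\mapsto\log[M_h(\rho)/M_h(\rho/2)]$ that you invoke. (Indeed, $\log|h|$ is superharmonic, not subharmonic, where $h\neq 0$.) The clean fix is to apply Hadamard to the holomorphic $g$ in the Stoilow factorization $f=g\circ\chi$ of the quasiregular map $f=w+iv$, where $v$ is the stream function of $w$ as in \eqref{eq.Stream}: then $\|f\|_{L^\infty(\mathcal B_\rho)}=M_{|g|}(\rho)$ enjoys genuine log-convexity in $\log\rho$. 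One returns from $M_{|g|}$ to $M_{|h|}$ using $|h|\le|g|$ together with a Borel--Carath\'eodory estimate $M_{|g|}(\rho)\le C\,M_{|h|}(2\rho)$ (normalizing $g(0)\in\R$), at the cost of a universal constant. This is precisely what the constant $C$ in \eqref{est-quasiball} is there to absorb, so your closing remark that one obtains the sharp constant $C=1$ is not justified without further argument.

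A minor second point: with the paper's literal definition $\mathcal B_r=\{z\in B_r:|\chi(z)|<r\}=B_r\cap\chi^{-1}(B_r)$, one has $\chi(\mathcal B_r)=\chi(B_r)\cap B_r$, which need not equal $B_r$, so the identification $\|w\|_{L^\infty(\mathcal B_\rho)}=M_h(\rho)$ is an inequality rather than an equality. This again is harmless up to universal constants (via \eqref{quasiball}), and is likely a typo for $\{z\in B_R:|\chi(z)|<r\}$; you already anticipate this in your last paragraph.
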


As a corollary, by the distortion relation \eqref{quasiball}, we obtain the doubling property in Euclidean balls.
\begin{corollary}\label{coro-interior-log}
    Let $u$ be a nontrivial $A$-harmonic in $B_R$. Then there exist $\theta \in (0,1/2)$ and $C>0$ such that for all $r\in (0, R)$,
\begin{equation}\label{loggrow-interior}
    \log \frac{\| u \|_{L^\infty(B_{r})}}{ \| u \|_{L^\infty(B_{r/2})} } \le C \Big( 1+ \log \frac{R}{r} \Big)  \Big( 1 + \log \frac{\| u \|_{L^\infty(B_{R})}} { \| u \|_{L^\infty(B_{\theta R})} } \Big) .
\end{equation}
\end{corollary}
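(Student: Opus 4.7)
\textbf{Proof proposal for Corollary \ref{coro-interior-log}.}
The plan is to transfer the quasi-ball doubling estimate of Lemma \ref{lem.quasiball} to Euclidean balls via the two-sided distortion bound \eqref{quasiball}, and then iterate. Concretely, given any $\rho\in (0,R)$, the containment \eqref{quasiball} shows that we can choose
\begin{equation*}
  r_-(\rho) \;=\; C_1^{-1} R\,(\rho/R)^{1/\sigma}, \qquad r_+(\rho)\;=\; C_1\, R\,(\rho/R)^{\sigma},
\end{equation*}
so that $\mathcal{B}_{r_-(\rho)} \subset B_\rho \subset \mathcal{B}_{r_+(\rho)}$. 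Applied to $\rho=r$ and $\rho=r/2$, these sandwiches yield
\begin{equation*}
  \frac{\|u\|_{L^\infty(B_r)}}{\|u\|_{L^\infty(B_{r/2})}}
  \;\le\; \frac{\|u\|_{L^\infty(\mathcal{B}_{r_+(r)})}}{\|u\|_{L^\infty(\mathcal{B}_{r_-(r/2)})}}.
\end{equation*}

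The next step is to control this quasi-ball ratio by iterating Lemma \ref{lem.quasiball}. Setting $s=r_-(r/2)$ and choosing the smallest integer $N$ with $2^N s \ge r_+(r)$ (and $2^N s\le R$, which is harmless after replacing $r$ by $\min\{r, cR\}$ for a fixed small $c$; the remaining range $r\ge cR$ is trivial via $\|u\|_{L^\infty(B_{r/2})}\ge \|u\|_{L^\infty(B_{\theta R})}$), one $N$-fold application of Lemma \ref{lem.quasiball} gives
\begin{equation*}
  \frac{\|u\|_{L^\infty(\mathcal{B}_{r_+(r)})}}{\|u\|_{L^\infty(\mathcal{B}_{r_-(r/2)})}}
  \;\le\; \Bigl( C\,\frac{\|u\|_{L^\infty(\mathcal{B}_R)}}{\|u\|_{L^\infty(\mathcal{B}_{R/4})}}\Bigr)^{N}.
\end{equation*}
A direct computation gives $r_+(r)/r_-(r/2) \simeq (R/r)^{1/\sigma - \sigma}$, so $N\le C(1+\log(R/r))$. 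Taking logarithms then yields
\begin{equation*}
  \log\frac{\|u\|_{L^\infty(B_r)}}{\|u\|_{L^\infty(B_{r/2})}}
  \;\le\; C\bigl(1+\log (R/r)\bigr)\Bigl(1+\log\frac{\|u\|_{L^\infty(\mathcal{B}_R)}}{\|u\|_{L^\infty(\mathcal{B}_{R/4})}}\Bigr).
\end{equation*}

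Finally, I will replace the quasi-balls on the right-hand side by Euclidean balls. The identity $\mathcal{B}_R=B_R$ from \eqref{quasiball} handles the numerator, and the inclusion $B_{\theta R}\subset \mathcal{B}_{R/4}$ with $\theta=(4C_1)^{-1/\sigma}\in (0,1/2)$ handles the denominator by giving $\|u\|_{L^\infty(\mathcal{B}_{R/4})}\ge \|u\|_{L^\infty(B_{\theta R})}$. Substituting these into the previous display produces exactly \eqref{loggrow-interior}.

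I do not expect a serious obstacle here: the result is essentially a bookkeeping argument once one sets up the correct quasi-ball/Euclidean-ball sandwich. The only subtlety is that the exponents in \eqref{quasiball} differ between the inner and outer bounds (by a factor $1/\sigma$ versus $\sigma$), which forces the factor $(1+\log(R/r))$ in \eqref{loggrow-interior}. If one had a quasiconformal mapping with bi-Lipschitz distortion ($\sigma=1$), then $N$ would be $O(1)$ and one would recover the standard (scale-invariant) doubling property; the degenerate distortion inherent in the general quasiconformal setting is exactly what produces the extra logarithmic loss.
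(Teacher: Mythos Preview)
Your proposal is correct and follows essentially the same approach as the paper: sandwich Euclidean balls by quasi-balls via \eqref{quasiball}, iterate Lemma~\ref{lem.quasiball} a number of times comparable to $1+\log(R/r)$, and then convert $\mathcal{B}_R$, $\mathcal{B}_{R/4}$ back to $B_R$, $B_{\theta R}$. The only cosmetic difference is bookkeeping: the paper starts from a quasi-ball radius $r_1$ and defines $r=R(r_1/C_1R)^{1/\sigma}$, whereas you start from the Euclidean radius $r$ and define $r_\pm(\cdot)$; the resulting count of iterations ($m$ versus $N$) and the final choice of $\theta$ coincide.
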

\begin{proof}
Let $r_1 \le cR$ for some small $c>0$. Let $m$ be a positive integer to be selected later. Applying \eqref{est-quasiball} $m$ times to $r = {2^{-j} r_1}$ with $j=0,1,\cdots, m-1$, we obtain
\begin{equation}\label{est-db-2mr1}
    \frac{\norm{u}_{L^\infty(\mathcal{B}_{r_1})}}{\norm{u}_{L^\infty(\mathcal{B}_{2^{-m} r_1 })}}\le \bigg( C\frac{\norm{u}_{L^\infty(\mathcal{B}_{R})}}{\norm{u}_{L^\infty(\mathcal{B}_{R/4})}} \bigg)^m.
\end{equation}
By \eqref{quasiball}, we see that
\begin{equation}\label{balls-relation}
    B_{R(\frac{r_1}{C_1R})^{1/\sigma}} \subset \mathcal{B}_{r_1} \quad \text{and} \quad \mathcal{B}_{2^{-m}r_1} \subset B_{R(\frac{C_1 2^{-m}r_1}{R})^\sigma}.
\end{equation}

Set $r = R(\frac{r_1}{C_1R})^{1/\sigma}$. We choose $m$ to be the smallest integer such that
\begin{equation}\label{findm}
    R\Big(\frac{C_1 2^{-m}r_1}{R} \Big)^\sigma \le \frac12 r = \frac12 R\Big(\frac{r_1}{C_1R} \Big)^{1/\sigma}.
\end{equation}
A simple calculation yields that
\begin{equation*}
    m = C + (\log 2)^{-1}(\sigma^{-2} - 1) \log \frac{R}{r_1} \le C\Big(1 + \log \frac{R}{r} \Big).
\end{equation*}
Moreover, by \eqref{est-db-2mr1}, \eqref{balls-relation} and \eqref{findm}, we have
\begin{equation*}
    \frac{\| u \|_{L^\infty(B_{r})}}{ \| u \|_{L^\infty(B_{r/2})} } \le \bigg( C\frac{\norm{u}_{L^\infty(B_{R})}}{\norm{u}_{L^\infty(\mathcal{B}_{R/4})}} \bigg)^m.
\end{equation*}
By noting that $B_{\theta R} \subset \mathcal{B}_{R/4}$ for some $\theta \in (0,1/2)$ and taking logarithmic to the last inequality, we obtain 
\eqref{loggrow-interior} for $r < c R$. Finally, for $cR<r<R$, the desired estimate is trivial by choosing $\theta \le c/2$.
\end{proof}

\subsection{Rigid form of doubling index}
\label{subsec:DefDoubling}

If $0 \in \overline{\Omega}$ and $A(0) = I$, then the doubling index centered at 0 of an $A$-harmonic function $u$ is defined by
\begin{equation}\label{def.DoublingIndex.A=I}
    N_u(0,r) = \log \frac{\| u \|_{L^2(B_{2r} \cap \Omega)}^2 }{\| u \|_{L^2(B_{r} \cap \Omega)}^2}.
\end{equation}
The doubling index is a crucial notation in the study of unique continuation due to its monotonicity formula (for the interior case), provided that $A$ is Lipschitz. Hence it is useful and convenient to introduce a rigid form of this quantity for general coefficient matrix $A(x)$ which is invariant under an affine transformation; see e.g. \cite{ZZ23} for details.
For any $x_0\in \overline{\Omega}$, we define
\begin{equation*}
\begin{aligned}
\mu (x_0,y) &= \frac{(y-x_0)\cdot A(x_0)^{-1}A(y)A(x_0)^{-1}(y-x_0)}{(y-x_0)\cdot A(x_0)^{-1}(y-x_0)}, \\
E(x_0,r) &= x_0+A^{\frac{1}{2}}(x_0)(B_r(0)),
\end{aligned}
\end{equation*}
and 
\begin{equation}\label{defJ2}
    J_{u}(x_0,r) = |\mathrm{det} A(x_0)|^{-\frac 12}\int_{ E(x_0,r)\cap \Omega}\mu(x_0,y)u^2dy.
\end{equation}

The doubling index is defined by
\begin{equation*}
    N_{u}(x_0,r)=  \log \frac{J_{u}(x_0,2r)}{J_{u}(x_0,r)}.\\
\end{equation*}
Note that if $A(x_0) = I$, then the above definition is reduced to \eqref{def.DoublingIndex.A=I}.

Recall the well-known almost monotonicity of the interior doubling index, i.e., if $u$ is $A$-harmonic in $E(x_0,2r)$, then
\begin{equation*}
    \sup_{0<s<r} N_u(x_0,s) \le CN_u(x_0,r).
\end{equation*}

Notice that the underlying domain $E_r(x_0) = E(x_0, r)$ of integration in \eqref{defJ2} is an ellipsoid that may vary from point to point for variable coefficients. Nevertheless, they are comparable to the Euclidean balls $B_r(x_0) = B(x_0, r)$ in the sense that
\begin{equation*}
    B_{\Lambda^{-1/2} r}(x_0) \subset E_r(x_0) \subset B_{\Lambda^{1/2} r}(x_0).
\end{equation*}
Without loss of generality, in a small region we may even assume $\Lambda \in [1, 2)$ by an affine transformation and the Lipschitz continuity of $A(x)$.

\subsection{From boundary doubling index to interior doubling index}

For the Neumann boundary value problem, it seems difficult to directly establish the doubling property for the interior points close to the boundary. Our strategy is to pass the estimate of doubling index from the boundary to the interior, by reducing the Neumann boundary value problem to the interior problem, which is possible since Corollary \ref{coro-interior-log} does not rely on the regularity of the coefficients.

Let $\Omega$ be a bounded Lipschitz domain.
Let $0\in \partial \Omega$ and $u$ be $A$-harmonic in $B_R \cap \Omega$, subject to the vanishing Neumann boundary condition. For $0< R< r_0$, there exists a bi-Lipschitz diffeomorphism $F^{-1} : B_R \cap \Omega \to \R^2_+:=\{(x_1,x_2): x_2>0 \}$ such that $F^{-1}(0) = 0$ and $F^{-1}(B_R \cap \partial \Omega) \subset \partial \R^2_+$. Moreover, $F$ locally distorts distances by a bounded factor, i.e., $$C^{-1}|x-y|\le |F^{-1}(x-y)|\le C |x-y|$$ 
for all $x, y \in B_R \cap \Omega$. This particularly implies that
\begin{equation*}
    B_{C^{-1} r} \cap \R^2_+ \subset F^{-1}(B_r\cap\Omega ) \subset B_{Cr}\cap \R^2_+ .
\end{equation*}
Define $\tilde{u} = u\circ F$. Then $\tilde{u}$ is $\widetilde{A}$-harmonic in $B_{C^{-1}R} \cap \R^2_+ \subset F^{-1}(B_R) \cap {\R^2_+}$ with
\begin{equation*}
    \widetilde{A} = |\det \mathcal{J}_F| \mathcal{J}_F^{-T} (A\circ F) \mathcal{J}_F^{-1},
\end{equation*}
where $\mathcal{J}_F$ is the Jacobian of $F$. Moreover, $\tilde{u}$ satisfies the vanishing Nuemman boundary condition on the flat boundary $F^{-1}(B_R \cap \partial \Omega)$.

Next, we extend the function $\tilde{u}$ to the entire ball by an even reflection across the flat boundary $\{ x_2=0 \}$, i.e., $\widetilde{u}(x_1, -x_2) = \widetilde{u}(x_1, x_2)$ for $x_2<0$. Meanwhile, we extend the coefficient matrix $\widetilde{A}(x_1,x_2)$ for $x_2 < 0$ by
\begin{equation*}
    \widetilde{A}(x_1, x_2) =  \begin{pmatrix}
 1 &0 \\0
  &-1
\end{pmatrix} \widetilde{A}(x_1, -x_2) \begin{pmatrix}
 1 &0 \\0
  &-1
\end{pmatrix}.
\end{equation*}

With the extended coefficient matrix, still denoted by $\widetilde{A}$, the function $\tilde{u}$ (now well-defined on the entire ball $B_{C^{-1} R}$) is $\widetilde{A}$-harmonic and $\widetilde{A}$ is bounded measurable and satisfies the ellipticity condition. Observe that in general, $\widetilde{A}$ may have a jump (thus discontinuous) across the flat interface $\{ x_2 = 0 \}$. Nevertheless, Corollary \ref{coro-interior-log} applies to $\tilde{u}$, which leads to the following result.

\begin{proposition}\label{prop-Nu-loggrowth}
    Let $\Omega\subset \mathbb{R}^2$ be a Lipschitz domain and $0\in \partial\Omega.$  Suppose that $u$ is $A$-harmonic in $B_R(0) \cap \Omega$ subject to the vanishing Neumann boundary condition on $B_R(0)\cap\partial\Omega.$ Then  there exist $\theta \in (0,1/2), C>0$ and $r_0>0$ depending on $A$ and $\Omega$ such that if $0<R<r_0  $, then for $r\in (0,\Lambda^{-1/2} R)$,
    \begin{equation}\label{est-Nu-log}
        N_u(0, r) \le C \Big( 1+ \log \frac{R}{r} \Big) \log \frac{\| u\|_{L^2(B_R \cap \Omega )}}{ \| u \|_{L^2(B_{\theta R}\cap \Omega)}} .
    \end{equation}
\end{proposition}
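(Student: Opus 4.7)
The plan is to exploit the setup laid out in the two paragraphs just before the statement: flatten the boundary by the bi-Lipschitz map $F$, even-reflect $\tilde u = u \circ F$ (and its coefficient matrix $\widetilde A$) across $\{x_2 = 0\}$, and observe that the vanishing Neumann condition on $u$ is exactly what makes the conormal flux match across the flat interface, so that the reflected function becomes weakly $\widetilde A$-harmonic on the full ball $B_{C^{-1}R}$ with $\widetilde A$ merely bounded measurable and uniformly elliptic. Once this is set up, the bulk of the argument is a single application of Corollary \ref{coro-interior-log}, whose hypotheses require only bounded measurable coefficients and therefore survive the discontinuity of $\widetilde A$ across $\{x_2=0\}$.

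Before flattening, I would first apply an affine change of variables to reduce to $A(0) = I$. Then $E(0,r) = B_r$, the weight $\mu(0,\cdot)$ is uniformly comparable to $1$ in a small ball (by continuity of $A$ at $0$), and the rigid quantity $J_u(0,r)$ satisfies $J_u(0,r) \simeq \norm{u}_{L^2(B_r\cap\Omega)}^2$. Thus it suffices to bound the Euclidean $L^2$-doubling exponent $\log(\norm{u}_{L^2(B_{2r}\cap\Omega)}^2/\norm{u}_{L^2(B_r\cap\Omega)}^2)$ by the right-hand side of \eqref{est-Nu-log}.

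Applying Corollary \ref{coro-interior-log} to the reflected $\tilde u$ on $B_{C^{-1}R}$ yields, for every $r \in (0, cR)$,
\begin{equation*}
\log \frac{\norm{\tilde u}_{L^\infty(B_r)}}{\norm{\tilde u}_{L^\infty(B_{r/2})}} \le C\bigl(1 + \log(R/r)\bigr)\Bigl(1 + \log \frac{\norm{\tilde u}_{L^\infty(B_{C^{-1}R})}}{\norm{\tilde u}_{L^\infty(B_{\theta C^{-1} R})}}\Bigr).
\end{equation*}
I would then convert each $L^\infty$ norm into an $L^2$ norm via De Giorgi--Nash--Moser, which applies since $|\tilde u|$ is a subsolution of a uniformly elliptic equation with bounded measurable coefficients: $\norm{\tilde u}_{L^\infty(B_\rho)} \lesssim \rho^{-1}\norm{\tilde u}_{L^2(B_{2\rho})}$, with the trivial reverse bound in the other direction. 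A handful of dyadic steps convert the single-scale $L^\infty$ ratio on the left into the true $L^2$-doubling ratio $\norm{\tilde u}_{L^2(B_{2r})}/\norm{\tilde u}_{L^2(B_r)}$ without enlarging the $1 + \log(R/r)$ prefactor. Finally, since $F$ is bi-Lipschitz with Jacobian bounded above and below, $\norm{u}_{L^2(F(B_\rho)\cap\Omega)} \simeq \norm{\tilde u}_{L^2(B_\rho \cap \R^2_+)}$, and the distortion inclusions $B_{c\rho}\cap\Omega \subset F(B_\rho) \subset B_{C\rho}\cap\Omega$ let me replace the curved images by Euclidean balls centered at $0$, at the cost of enlarging/shrinking the radius by a universal factor. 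The even reflection only doubles the $L^2$ norm, which is harmless.

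The main obstacle is bookkeeping rather than a single hard ingredient. One must check (i) that the reflected $\tilde u$ really does satisfy $-\nabla\cdot(\widetilde A\nabla \tilde u)=0$ in the weak sense across $\{x_2=0\}$, which relies both on the vanishing Neumann condition and on the specific reflected form of $\widetilde A$; (ii) that the multiplicative radius distortions, accumulated through $F$, the reflection, the $L^\infty$--$L^2$ conversion, and the passage between $J_u(0,r)$ and Euclidean $L^2$ balls, can all be absorbed into the final constants without damaging the sharp $1+\log(R/r)$ prefactor; and (iii) that the admissible range $r\in(0,\Lambda^{-1/2}R)$ of the conclusion is consistent with the range $r\in(0,cR)$ coming out of Corollary \ref{coro-interior-log}, which is achieved simply by shrinking $r_0$. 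It is worth emphasizing that no Almgren-type monotonicity on the Neumann boundary is used here, which is exactly the tool that is unavailable on $C^{1,\alpha}$ boundaries and which the stream-function duality will later replace in the main theorems.
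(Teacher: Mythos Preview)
Your proposal is correct and follows essentially the same route as the paper: flatten by the bi-Lipschitz $F$, even-reflect to obtain a globally $\widetilde A$-harmonic $\tilde u$ on the full ball, apply Corollary~\ref{coro-interior-log}, and then unwind via $L^\infty$--$L^2$ comparisons and the bi-Lipschitz distortion, summing a bounded number of dyadic steps to pass from the $L^\infty$ ratio to $N_u(0,r)$. The only item you leave implicit is the absorption of the additive ``$+1$'' on the right into the logarithmic doubling ratio, which the paper carries out via the universal lower bound $N_u(0,r)\ge c$ of Proposition~\ref{prop.lowbd.doubling}.
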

\begin{proof}
    As discussed before, by setting $\tilde{u} = u\circ F$ and reflection, $\tilde{u}$ is $\widetilde{A}$-harmonic in $B_{C^{-1}R}$. By Corollary \ref{coro-interior-log}, we have, for $0<r<\frac12 C^{-1}R$,
    \begin{equation}\label{loggrow-interior2}
    \log \frac{\| \tilde{u} \|_{L^\infty(B_{r})}}{ \| \tilde{u} \|_{L^\infty(B_{r/2})} } \le C \Big( 1+ \log \frac{R}{r} \Big) \Big( 1 +  \log \frac{\| \tilde{u} \|_{L^\infty(B_{ C^{-1} R/2})}} { \| \tilde{u} \|_{L^\infty(B_{\theta C^{-1} R/2})} } \Big).
    \end{equation}
In view of the bi-Lipschitz diffeomorphism $F$, we have
\begin{equation*}
    \| \tilde{u} \|_{L^{\infty}(B_{C^{-1}r})} \le  \| u \|_{L^{\infty}(B_r \cap \Omega) } \le \| \tilde{u} \|_{L^{\infty}(B_{Cr}) }.
\end{equation*}
Let $C_1$ be a constant to be selected.
Hence by using \eqref{loggrow-interior2} multiple times, we have
\begin{equation*}
\begin{aligned}   
    \log \frac{\| u \|_{L^\infty(B_{C_1 r} \cap \Omega)}}{ \| u \|_{L^\infty(B_{r} \cap \Omega)} } & \le \log \frac{\| \tilde{u} \|_{L^\infty(B_{C_1 Cr})}}{ \| \tilde{u} \|_{L^\infty(B_{C^{-1}r})} } \le \sum_{j=0}^K \log \frac{\| \tilde{u} \|_{L^\infty(B_{2^{j+1} C^{-1} r})}}{ \| \tilde{u} \|_{L^\infty(B_{2^{j}C^{-1}r})} } \\
    & \le C_2 \Big( 1+ \log \frac{R}{r} \Big) \Big( 1 +  \log \frac{\| \tilde{u} \|_{L^\infty(B_{C^{-1} R/2})}} { \| \tilde{u} \|_{L^\infty(B_{\theta C^{-1} R/2})} } \Big) \\
    & \le C_2 \Big( 1+ \log \frac{R}{r} \Big) \Big( 1 +  \log \frac{\| u \|_{L^\infty(B_{R/2} \cap \Omega)}} { \| u \|_{L^\infty(B_{\theta C^{-2} R/2} \cap \Omega)} } \Big),
\end{aligned}
\end{equation*}
where $K$ is the smallest integer such that $2^{K+1}C^{-1} \ge CC_1$.

Now, by the elliptic regularity, 
\begin{equation*}
    J_u(0,C_1 \Lambda^{-1} r) \le C\| u \|_{L^\infty(B_{C_1 r} \cap \Omega)},
\end{equation*}
and
\begin{equation*}
    J_u(0,2\Lambda r) \ge C^{-1}\| u \|_{L^\infty(B_{r} \cap \Omega)},
\end{equation*}
Choosing $C_1 = 4\Lambda^2$ and combining the previous estimates, we have
\begin{equation*}
\begin{aligned}
    N_u(0,2\Lambda r) & = \log \frac{J_u(0,4\Lambda r)}{J_u(0,2\Lambda r)} \le C+ \log \frac{\| u \|_{L^\infty(B_{C_1 r} \cap \Omega)}}{ \| u \|_{L^\infty(B_{r} \cap \Omega)} } \\
    & \le C \Big( 1+ \log \frac{R}{r} \Big) \Big( 1 +  \log \frac{\| u \|_{L^2(B_{R} \cap \Omega)}} { \| u \|_{L^2(B_{\theta C^{-3} R} \cap \Omega)} } \Big) \\
    & \le C \Big( 1+ \log \frac{R}{r} \Big) \log \frac{\| u \|_{L^2(B_{R} \cap \Omega)}} { \| u \|_{L^2(B_{\theta C^{-3} R} \cap \Omega)} },
\end{aligned}
\end{equation*}
where in the last inequality the constant 1 is absorbed due to Proposition \ref{prop.lowbd.doubling}.
After relabeling $\theta C^{-3}$ as $\theta$ and $2\Lambda r$ as $r$, we obtain the desired \eqref{est-Nu-log} for $0<r<cR$ for some $c>0$. Finally, the estimate for the range $cR<r<\Lambda^{-1}R$ is trivial by requiring $\theta \le c\Lambda^{-1}$.
\end{proof}

The next proposition allows us to pass the boundary doubling index to the interior doubling index. This will be useful when we estimate the length of nodal curves quantitatively.
\begin{proposition}\label{prop.IntDoubling}
    Let $\Omega, A$ and $u$ satisfy the same assumptions as Proposition \ref{prop-Nu-loggrowth}. In addition, assume $A$ satisfies \eqref{Lip}. Then for every $x\in B_{\theta R} \cap \Omega$ with $d(x):= {\rm dist}(x,\partial \Omega) \ge \frac12 |x|$ (i.e., $x$ is in a nontangential cone of the origin) and $r\in (0, c R)$,
    \begin{equation}\label{est-NuInt-log}
        N_u(x, r) \le C \Big( 1+ \log \frac{R}{\max{\{d(x), r\} } } \Big) \log \frac{\| u\|_{L^2(B_R \cap \Omega )}}{ \| u \|_{L^2(B_{\theta R} \cap \Omega)}} .
    \end{equation}
\end{proposition}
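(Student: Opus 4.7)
The plan is to adapt the reflection-and-quasiconformal argument used for Proposition~\ref{prop-Nu-loggrowth} and combine it with the interior almost monotonicity of the doubling index, which is now available because $A$ is assumed Lipschitz. I split according to whether $r$ is comparable to $d(x)$ or much smaller than $d(x)$.

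\textbf{Step 1: Large radius, $r\ge c_0 d(x)$.} Carry out exactly the setup in the proof of Proposition~\ref{prop-Nu-loggrowth}: flatten $\partial\Omega$ near $0$ by a bi-Lipschitz map $F$, set $\tilde u=u\circ F$, and extend $\tilde u$ together with its coefficient matrix by even reflection across $\{y_2=0\}$ so that $\tilde u$ is $\widetilde A$-harmonic on the entire ball $B_{C^{-1}R}(0)$. Put $\tilde x=F^{-1}(x)$; the bi-Lipschitz property and $|x|\le\theta R$ with $\theta$ small place $\tilde x$ deep inside that ball, so $\tilde u$ is $\widetilde A$-harmonic in $B_{c_1 R}(\tilde x)$ for some $c_1>0$. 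I then apply Corollary~\ref{coro-interior-log} recentered at $\tilde x$, which is legitimate because its proof uses only the Stoilow factorization and is insensitive to the choice of center. Translating back to $u$ via $F$ (using the even reflection to identify $\sup_{B_\rho(\tilde x)}|\tilde u|$ with the corresponding supremum of $u$ over a subset of $\Omega$) and passing from $L^\infty$-norms on small Euclidean balls to the $J_u$-ellipsoid integrals via elliptic regularity and Proposition~\ref{prop.lowbd.doubling}, exactly as at the end of the proof of Proposition~\ref{prop-Nu-loggrowth}, I obtain
\begin{equation*}
    N_u(x,r) \le C\Bigl(1+\log\frac{R}{r}\Bigr)\log\frac{\|u\|_{L^2(B_R\cap\Omega)}}{\|u\|_{L^2(B_{\theta R}\cap\Omega)}}
\end{equation*}
throughout this regime, where $\max\{d(x),r\}\simeq r$.

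\textbf{Step 2: Small radius, $r<c_0 d(x)$.} The Lipschitz hypothesis \eqref{Lip} and the nontangential condition $d(x)\ge\tfrac12|x|$ make the standard interior almost monotonicity of the doubling index applicable at $x$ inside any ellipsoid $E(x,s)\subset\Omega$. Choosing $c_0$ so small that $E(x,2c_0 d(x))\subset\Omega$, we get $N_u(x,r)\le C\,N_u(x,c_0 d(x))$, and Step~1 applied at the radius $c_0 d(x)$ (which now lies in the large-radius regime) yields the desired bound with $R/d(x)$ in place of $R/r$, matching \eqref{est-NuInt-log} since $\max\{d(x),r\}=d(x)$ in this case.

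\textbf{Main obstacle.} The delicate bookkeeping is in Step~1. First, one must verify that the quasiconformal distortion estimate \eqref{quasiball} underlying Corollary~\ref{coro-interior-log} carries over with $\Lambda$-dependent constants when the Stoilow factorization is performed at the recentered point $\tilde x$ rather than at the origin; this is standard since the Beltrami equation is translation-invariant. More substantively, one must transfer the $L^\infty$ ratios on small Euclidean balls around $\tilde x$ (or, after $F^{-1}$, around $x$) to an $L^2$-ratio of $u$ on the fixed reference balls $B_R\cap\Omega$ and $B_{\theta R}\cap\Omega$. The small denominator ball $B_{c\theta R}(x)$ is not automatically comparable to $B_{\theta R}(0)$, but using $|x|\le\theta R$ one has $B_{\theta R}(0)\subset B_{(1+c^{-1})\theta R}(x)$ and vice versa at the cost of a constant factor in radius; the resulting geometric mismatch can be absorbed into the log-loss factor $(1+\log R/r)$ via a bounded chain of applications of \eqref{est-quasiball}, as in the proof of Proposition~\ref{prop-Nu-loggrowth}, giving the clean bound stated.
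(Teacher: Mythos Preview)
Your proposal is correct and follows the same essential mechanism as the paper---flatten, reflect, apply the quasiconformal doubling estimate, then appeal to interior almost monotonicity for small radii---but the case decomposition differs. You handle all radii $r\ge c_0\,d(x)$ in a single step by \emph{recentering} Corollary~\ref{coro-interior-log} at $\tilde x=F^{-1}(x)$ and then absorbing the mismatch between balls centered at $x$ and balls centered at $0$ into the constants (which is legitimate once $\theta$ is chosen small relative to the bi-Lipschitz constants and the $\theta$ coming from Corollary~\ref{coro-interior-log}). The paper instead splits your Step~1 into two regimes: when $r>4d(x)$ it uses the elementary containments $E(0,\Lambda^{-1}r/4)\subset E(x,r)\subset E(x,2r)\subset E(0,4\Lambda r)$, valid because $|x|\le 2d(x)<r/2$, to reduce $N_u(x,r)$ directly to a finite sum of $N_u(0,\cdot)$ and then invokes Proposition~\ref{prop-Nu-loggrowth} as a black box; only for the narrow band $r\in(\Lambda^{-1}d(x)/4,\,4d(x))$ does it recenter the quasiconformal argument at $x$, and even then it immediately feeds the output back into the large-$r$ case. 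Your Step~2 and the paper's Case~3 are identical.

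The paper's three-case split is slightly more economical in that it reuses the already-proved boundary estimate at the origin wherever possible, minimizing the amount of fresh quasiconformal bookkeeping. Your two-case version is conceptually cleaner but requires the ball-recentering argument you flag in your ``Main obstacle'' paragraph; that argument is sound, and the constants work out provided the $\theta$ in the statement is taken small enough that $B_{\theta R}(0)$ sits well inside the reference ball $B_{\theta_1 c_1 R}(\tilde x)$ produced by Corollary~\ref{coro-interior-log} at $\tilde x$.
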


\begin{proof}
    First note that $0 \in \partial \Omega$ and thus $\frac12|x| \le d(x) \le |x|$. Now we consider three cases.
    
    \textbf{Case 1:} $r \in (4 d(x), cR)$. Then
    \begin{equation*}
        E(0, \Lambda^{-1} r/4) \subset E(x, r) \subset E(x, 2r) \subset E(0, 4 \Lambda r).
    \end{equation*}
    Thus, by Proposition \ref{prop-Nu-loggrowth},
    \begin{equation}\label{est.IntNu.logr}
    \begin{aligned}
        N_u(x,r) & \le \log \frac{J_u(0, 4\Lambda r)}{J_u(0, \Lambda^{-1} r/4)} \le \sum_{j=0}^{3+ [2\log_2 \Lambda]} N_u(0, 2^{j-2} \Lambda^{-1} r) \\
        & \le C \Big( 1+ \log \frac{R}{r} \Big)  \log \frac{\| u\|_{L^2(B_R \cap \Omega)}}{ \| u \|_{L^2(B_{\theta R} \cap \Omega )}}.
    \end{aligned}
    \end{equation}

    \textbf{Case 2:} $r\in (\Lambda^{-1} d(x)/4, 4 d(x))$. For convenience, set $R_x = 4d(x)$. In this case, we apply the argument in Proposition \ref{prop-Nu-loggrowth}, using Corollary \ref{coro-interior-log}. In fact, by flatting the boundary and even reflection, as well as Lemma \ref{lem.quasiball}, we can show, for any $r\in (\Lambda^{-1} R_x/16,  R_x)$,
    \begin{equation*}
    \begin{aligned}
        N_u(x,r) & \le C\Big (1 + \log \frac{R_x}{r} \Big) \log \frac{\| u \|_{L^2(B_{\theta^{-1} \Lambda R_x}(x) \cap \Omega) } }{ \| u \|_{L^2(B_{\Lambda R_x} (x) \cap \Omega ) }  }  \\
        & \le C \log \frac{\| u \|_{L^2(B_{\theta^{-1} \Lambda R_x}(x) \cap \Omega) } }{ \| u \|_{L^2(B_{\Lambda R_x} (x) \cap \Omega) }  } \\
        & \le C \Big( 1+ \log \frac{R}{d(x)} \Big)  \log \frac{\| u\|_{L^2(B_R \cap \Omega )}}{ \| u \|_{L^2(B_{\theta R} \cap \Omega)}},
    \end{aligned}
    \end{equation*}
    where we have used the result from Case 1 for a finite number of times for $4d(x) \le r \le 4\theta^{-1}\Lambda^2 d(x)$.

    \textbf{Case 3:} $r \in (0, \Lambda^{-1} d(x)/4)$.  In this case, we have $E(x,4r) \subset \Omega$. As a consequence of the interior almost monotonicity of the doubling index for $r\in (0, \Lambda^{-1} d(x)/4)$ (see \cite{HLbook}), we have
    \begin{equation*}
        N_u(x ,r) \le CN_u(x, \Lambda^{-1} d(x)/2) \le C \Big( 1+ \log \frac{R}{d(x)} \Big)  \log \frac{\| u\|_{L^2(B_R \cap \Omega )}}{ \| u \|_{L^2(B_{\theta R} \cap \Omega)}}.
    \end{equation*}
    where in the last inequality we used the result from the second case when $r =\Lambda^{-1} d(x)/2$. This ends the proof.
\end{proof}

\begin{remark}
    We will not need the full version of the above proposition in this paper. Indeed, we only need the case when $R \simeq d(x) \simeq r$.
\end{remark}

\section{Critical points}
\label{sec:criticalpoint}

It has been well-known that in the two dimensional case, the critical set of an $A$-harmonic function consists of discrete points in the interior domains. We refer to the recent work \cite{Zhu23b,KZ25} containing the estimate of the number of interior critical points for the two dimensional equations. On the other hand, the number of boundary critical points depends essentially on the boundary geometry. In particular, for a Lipschitz boundary, each convex tip will generate a critical point and therefore the number of critical points can be infinity even if the boundary is convex; also see the counterexample in \cite{KZ22}. Nevertheless, if the boundary is flat enough, say $C^{1,\alpha}$ or even $C^{1,{\rm Dini}}$, then we can actually show the number of boundary critical points is finite.  

In this paper, we will use an geometric approach to estimate the number the critical points in planar domains, which is inspired by \cite{Ale87,AM94}. The following lemma is a restatement of \cite[Remark 1.2]{Ale87} for Lipschitz coefficients and \cite[Lemma 2.5]{AM94} (including its following remark), without referring to the notation of geometric index. We provide an independent and self-contained proof for the reader's convenience.

\begin{lemma}\label{lem.Geo.CP}
    Let $A$ be $C^\alpha$-H\"{o}lder continuous. Let $u$ be $A$-harmonic in $\Omega \subset \R^2$. Then $\nabla u(x) = 0$ for $x\in \Omega$ if and only if $x$ is an intersection of at least two level curves.
\end{lemma}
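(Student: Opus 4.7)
The plan is to reduce the lemma to the harmonic case via the Stoilow factorization recalled in Subsection~\ref{subsec:quasiconformal}, and then use the classical local structure of harmonic functions at critical points in $\mathbb{R}^2$. Locally near $x_0 \in \Omega$, write $u = h \circ \chi$, where $\chi$ is a quasiconformal mapping and $h$ is harmonic in $\chi(U)$. Because $A$ is $C^\alpha$-Hölder continuous, the Beltrami coefficients $\mu,\nu$ in \eqref{def.mu+nu} are $C^\alpha$, so the classical Schauder-type regularity for the Beltrami equation upgrades $\chi$ to a $C^{1,\alpha}$ diffeomorphism with non-degenerate Jacobian. By the chain rule, $\nabla u(x_0)=0$ if and only if $\nabla h(\chi(x_0))=0$, and since $\chi$ is a homeomorphism the level set $\{u=u(x_0)\}$ near $x_0$ is the $\chi^{-1}$-image of the level set $\{h=h(\chi(x_0))\}$ near $\chi(x_0)$; in particular, a topological ``intersection of at least two level curves'' is preserved in both directions.

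For the backward direction, suppose $\nabla u(x_0) \neq 0$. The implicit function theorem (applicable since $u \in C^{1,\alpha}$) implies $\{u=u(x_0)\}$ is a single $C^1$ simple arc through $x_0$, hence $x_0$ cannot be an intersection of two distinct level curves. This settles one implication without needing the factorization.

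For the forward direction, assume $\nabla u(x_0)=0$; by the previous paragraph it suffices to show that the zero set $\{h=h(z_0)\}$ has at least two branches meeting at $z_0:=\chi(x_0)$. Since $h$ is real-harmonic and nonconstant, $\partial_z h$ is a nontrivial holomorphic function, and the critical assumption forces
\begin{equation*}
    \partial_z h(z) = c(z-z_0)^{N-1} + O(|z-z_0|^{N}), \quad c \neq 0, \ N \geq 2.
\end{equation*}
Integrating gives the local expansion $h(z)-h(z_0) = \mathrm{Re}\bigl(\tfrac{2c}{N}(z-z_0)^N\bigr) + o(|z-z_0|^N)$. The zero set of the leading harmonic polynomial consists of $2N$ rays meeting at $z_0$ at equal angles, i.e., $N$ curves through $z_0$. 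A standard argument-principle / Rouché perturbation on small circles $|z-z_0|=\varepsilon$ (where the remainder is strictly dominated by the leading term) shows that the actual level set $\{h=h(z_0)\}$ inherits exactly this topological picture: $N \geq 2$ analytic arcs crossing at $z_0$. Pulling back by the homeomorphism $\chi^{-1}$ concludes the proof.

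The main technical point is the asymptotic/perturbation step in the last paragraph, which ensures that the full level set of $h$ (not just that of its leading polynomial) really decomposes into $N$ branches at $z_0$; the cleanest way to carry it out is a degree/winding-number count on circles of small radius around $z_0$, exploiting that $\partial_z h$ has a zero of finite order. Everything else is essentially bookkeeping using Stoilow factorization and the implicit function theorem.
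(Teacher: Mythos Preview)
Your backward direction via the implicit function theorem and the local expansion for harmonic functions are both fine and match the paper. The gap is in the reduction step. You assert that since $\mu,\nu$ in \eqref{def.mu+nu} are $C^\alpha$, Schauder regularity upgrades $\chi$ to a $C^{1,\alpha}$ diffeomorphism. But $\chi$ does not solve a Beltrami equation with coefficients $\mu,\nu$; it must share the \emph{effective} Beltrami coefficient of $f=u+iv$, namely $\tilde{\mu}=\mu+\nu\,\overline{\partial_z f}/\partial_z f$, in order for $f\circ\chi^{-1}$ to be holomorphic. The factor $\overline{\partial_z f}/\partial_z f$ has unit modulus but is undefined precisely at critical points of $f$---which coincide with the critical points of $u$---so $\tilde{\mu}$ can be discontinuous exactly at $x_0$. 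There the Schauder argument breaks down, and in fact a $C^1$ quasiconformal homeomorphism may well have an isolated critical point (e.g.\ $z\mapsto z|z|$), so one cannot conclude $\nabla h(\chi(x_0))=0$ from $\nabla u(x_0)=0$ by the chain rule. Note that this obstruction disappears only when $\nu\equiv 0$, i.e.\ when $\det A$ is constant, which is precisely the hypothesis of Theorem~\ref{thm.WithStructure} where the paper does use the $C^{1,\alpha}$-diffeomorphism route.

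The paper circumvents this by a blow-up argument rather than factorization: rescale $u_k(x)=u(r_kx)/\|u(r_k\cdot)\|_{L^2(B_2)}$ along a sequence $r_k\to 0$ with bounded doubling index, pass to a $C^1$ limit $u_\infty$ which is harmonic (since $A(r_kx)\to A(0)$) with $u_\infty(0)=0$, $\nabla u_\infty(0)=0$, and then transfer the $\ge 4$ nodal components of $u_\infty$ back to $u_k$ by uniform convergence. This uses only the $C^{1,\alpha}$ regularity of $u$ and avoids any regularity claim on a quasiconformal change of variables at the critical point.
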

\begin{proof}
    The ``if'' part follows from the implicit function theorem.
    We only need to prove the ``only if'' part. Without loss of generality, assume $u(0) = 0, \nabla u(0) = 0$ and $A(0) = I$. It suffices to show that $0$ is an intersection of at least two level curves.

 \textbf{Step 1}: Prove it for harmonic functions. Actually, if $u$ is harmonic and $u(0) = 0$, $\nabla u(0) = 0$, then in polar coordinates,
    \begin{equation*}
        u(x) = a_m r^m \cos(m\theta + \varphi_m) + O(r^{m+1}),
    \end{equation*}
    for some $m \ge 2$ and $a_m \neq 0$. Definitely, this implies that in a neighborhood of $0$, there are exactly $m$ nodal curves intersecting at $0$.

    \textbf{Step 2}: A blow up argument for $A$-harmonic functions.  It is well-known that in two dimensional case, we have the strong unique continuation at the interior points. This means that there exists a sequence of $r_k \to 0$ such that $N_u(0,r_k) \le C$ for some constant $C$. For otherwise, the vanishing order will be infinity. 

 Then define
    \begin{equation*}
        u_k(x) = \frac{u(r_k x)}{\| u(r_k\cdot) \|_{L^2(B_2)}}.
    \end{equation*}
    Since $u_k$ is uniformly $C^{1,\alpha}$ in $B_{3/2}$,
    we can find a subsequence of $\{ u_k\}$, still denoted by $\{ u_k \}$, such that $u_k \to u_\infty$ strongly in $C^1(B_1)$ and $u_\infty$ is nontrivial. Moreover, it is straightforward to verify that $u_\infty (0) = 0, \nabla u_\infty(0) = 0$ and $\Delta u_\infty = 0$ in $B_1$. By Step 1, we see that $u_\infty$ has at least two level curves intersecting at $0$. This also implies that $u_\infty$ at least has 4 disjoint nodal components, i.e., the subdomain of $B_1$ bounded by the nodal curves and $\partial B_1$. The function $u_\infty$ does not change sign on each nodal component. By the uniform convergence from $u_k \to u_\infty$, we see that for $k$ sufficiently large, $u_k$ also has at least 4 nodal components, which is equivalent to say that $u(x)$ has at least 4 nodal components in $B_{r_k}$ for $r_k$ small enough. This implies that $0$ must be an intersection of at least two nodal curves of $u$.
\end{proof}

\begin{proof}[Proof of Theorem \ref{thm.criticalpts}]
    Let $v$ be the stream function of $u$ in $\Omega \cap B_R$. Observe that \eqref{eq.Stream} implies that $|\nabla u| \simeq |\nabla v|$, and the critical points of $u$ are exactly the critical points of $v$. Thus it suffices to estimate the number of critical points of $v$ in $\overline{\Omega} \cap B_{\theta R}$. The reason that we estimate the number of critical points of $v$ instead of $u$ is explained in Remark \ref{rmk.u2v}.
    
    Recall that $v$ satisfies $\nabla\cdot (\widehat{A} \nabla v) = 0$ in $\Omega \cap B_R$ with $\widehat{A} = (\det A)^{-1} A$ and the vanishing Dirichlet boundary condition $v = 0$ on $\partial \Omega \cap B_R$. If $A$ is $C^\alpha$-H\"{o}lder continuous (resp. Lipschitz), then $\widehat{A}$ is also $C^\alpha$-H\"{o}lder continuous (resp. Lipschitz) with possibly different constants.

 Let $B_{R,+} = B_R \cap \{ x_2 > 0\}$. Since $\partial \Omega$ is $C^{1,\alpha}$, there exists a $C^{1,\alpha}$ diffeomorphism $F$ such that $F: B_{R,+}(0) \to \Omega \cap B_R$ such that $F(B_{R,+}) \subset \Omega \cap B_{R/2}$ and $F( B_R \cap \{ x_2 = 0 \}) \subset \partial \Omega \cap B_R$. Let $\tilde{v}(x) = v\circ F(x)$ for $x\in B_{R,+}$. Then $\tilde{v}$ is $\widetilde{A}$-harmonic in $B_{R,+}$, where
    \begin{equation*}
        \widetilde{A} = |\det \mathcal{J}_F| \mathcal{J}_F^{-T} (\widehat{A} \circ F) \mathcal{J}_F^{-1}.
    \end{equation*}
    Moreover, $\widetilde{A}$ is $C^\alpha$ continuous and $\tilde{v}  = 0$ on the flat boundary $B_R \cap \{ x_2 = 0\}$. Due to the bi-$C^{1,\alpha}$ diffeomorphism, $\nabla v\circ F(x) = 0$ if and only if $\nabla \tilde{v}(x) = 0$.

For the problem with vanishing Dirichlet boundary condition, we perform an odd reflection and thus define $\tilde{v}$ as an $\widetilde{A}$-harmonic function in $B_R$, where $\widetilde{A}$ is extended to $B_R$ by
    \begin{equation*}
    \widetilde{A}(x_1, x_2) =  \begin{pmatrix}
 1 &0 \\0
  &-1
\end{pmatrix} \widetilde{A}(x_1, -x_2) \begin{pmatrix}
 1 &0 \\0
  &-1
\end{pmatrix} \qquad \text{for } x \in B_{R,-} = B_R \cap \{ x_2 < 0\}.
\end{equation*}
The key point is that $\tilde{v}$ is still $C^{1,\alpha}$ in $B_{R}$, though $\widetilde{A}$ is not continuous on the interface $\{ x_2 = 0\}$. This is because $\tilde{v}$ is $C^{1,\alpha}$ in $B_{R,+}$ up to the boundary $B_R \cap \{ x_2 = 0\}$ with vanishing Dirichlet boundary condition, and an odd reflection does not generate a jump for $\nabla \tilde{v}$ on the interface $\{ x_2 = 0 \}$.

    Using quasiconformal mapping, we write $\tilde{v} = w\circ \chi$, where $w$ is harmonic in $B_{R/2}$ and $\chi: B_{R/2} \to B_{R/2}$ satisfying $\chi(0) = 0$ and $\chi(\partial B_{R/2}) = \partial B_{R/2}$. By the classical estimate of critical points for harmonic functions (see e.g. \cite{Han07}), we have
    \begin{equation*}
        \# \{ \mathcal{C}(w) \cap B_{cR} \} \le C \log \frac{\| w \|_{L^2(B_{R/2})}}{ \| w \|_{L^2(B_{R/4})}}.
    \end{equation*}
    The critical points of $w$ are known as geometric critical points of $\tilde{v}$; see the definition in \cite{AM94}. By Lemma \ref{lem.Geo.CP}, each interior critical point of $\tilde{v}$ in $B_{\theta R} \setminus \{ x_2 = 0\}$ will generate a critical point of $w$ in $B_{cR}$, where $\theta$ is chosen such that $\chi(B_{\theta R}) \subset B_{cR}$ . In fact, if $x_0 \in B_{\theta R,+}$ is a critical point of $\tilde{v}$, Lemma \ref{lem.Geo.CP} implies that $x_0$ is an intersection of at least two level curves. Because the quasiconformal mapping $\chi$ as a homeomorphism preserves the level curves, $\chi(x_0)$ is an intersection of at least two level curves of $w$. Then by Lemma \ref{lem.Geo.CP} again, $\chi(x_0) \in B_{cR}$ is a critical point of $w$.

    Next, we consider the critical points of $\tilde{v}$ on the interface $B_{\theta R} \cap \{x_2 = 0 \}$. Since $A$ is discontinuous on the interface, we cannot apply Lemma \ref{lem.Geo.CP} directly. Instead, we will use the fact that $\tilde{v} = 0$ on $\{ x_2 = 0 \}$, which is a level curve. Now if $\nabla \tilde{v}(x_0) = 0$ for $x_0\in B_{\theta R} \cap \{ x_2=0 \}$, we claim that there exists another level curve touch $x_0$ from $B_{\theta R, +}$. For otherwise, $\tilde{v}$ does not change sign in a neighborhood of $x_0$ above $\{ x_2 = 0\}$. By the Hopf's lemma, this will implies that $\partial_2 \tilde{v}(x_0) \neq 0$, leading to a contradiction. Consequently, $\tilde{v}$ has two level curves intersecting at the critical point $x_0 $ on the interface. Then as before, by Lemma \ref{lem.Geo.CP}, $\chi(x_0)$ is an intersection of two level curves of $w$ which implies that $\chi(x_0)$ is a critical point of $w$ in $B_{c R}$.

  It follows that
    \begin{equation*}
       \# \{ \mathcal{C}(\tilde{v}) \cap B_{\theta R} \} \le \# \{ \mathcal{C}(w) \cap B_{cR} \}.
    \end{equation*}
    Consequently, for some $\theta \in (0,1)$,
    \begin{equation}\label{est.critByw}
        \# \{ \mathcal{C}(u) \cap B_{\theta R} \} = \# \{ \mathcal{C}(v) \cap B_{\theta R} \} \le \# \{ \mathcal{C}(\tilde{v}) \cap B_{\theta _1 R} \} \le C \log \frac{\| w \|_{L^2(B_{R/2})}}{ \| w \|_{L^2(B_{R/4})}}.
    \end{equation}

    Finally, we only need to bound the right-hand side of \eqref{est.critByw} by a doubling quantity in terms of $u$. Actually,
    \begin{equation*}
    \begin{aligned}
        |B_{R/2}|^{-1/2} \| w\|_{L^2(B_{R/2})} & \le \| w\|_{L^\infty(B_{R/2})} = \| \tilde{v} \|_{L^\infty(B_{R/2})} \\
        & \le CR^{-1}\| \tilde{v} \|_{L^2(B_{R,+})} \le C \| \nabla \tilde{v} \|_{L^2(B_{R,+})} \\
        &  \le C \| \nabla v \|_{L^2(F(B_{R,+}))} 
     \le C \| \nabla u \|_{L^2(F(B_{R,+}))} \\    & \le C \| \nabla u \|_{L^2(\Omega \cap B_{R})}, 
 \end{aligned}    
    \end{equation*}
    where we used the fact $|\nabla v| \simeq |\nabla u|$ in $\Omega \cap B_R$. 
Similarly, we can derive
\begin{equation*}
   \begin{aligned}
    \| \nabla u \|_{L^2(\Omega \cap B_{\theta R})} &\le C  \| \nabla v\|_{L^2(\Omega \cap B_{\theta R})}
    \leq C R^{-1}\| \tilde{ v}\|_{L^2(\Omega \cap B_{2\theta R})}\\ 
   & \leq C|B_{R/4}|^{-1/2} \| w\|_{L^2(B_{R/4})},
    \end{aligned} 
\end{equation*}
for some $\theta \in (0,1)$ small enough (depending only on $A$ and $\Omega$). Thus,
    \begin{equation*}
        \log \frac{\| w \|_{L^2(B_{R/2})}}{ \| w \|_{L^2(B_{R/4})}} \le C \log \frac{\| \nabla u \|_{L^2(\Omega \cap B_{R})}}{ \| \nabla u \|_{L^2(\Omega \cap B_{\theta R})}}.
    \end{equation*}
    This, together with \eqref{est.critByw}, finishes the proof.
\end{proof}

\begin{remark}\label{rmk.u2v}
    In this remark, we explain why we study the critical points of the stream function $v$, instead of the original $A$-harmonic function $u$.
    The argument in the above proof does not work well for the boundary critical points of $u$ with a vanishing Neumann boundary condition. The reason is that, for the Neumann boundary condition, we perform an even reflection and thus the gradient is discontinuous. Moreover, the boundary itself is not a level curve and there might be only one level curve connecting to a critical point on the boundary. Therefore such critical points are not geometric critical points.
\end{remark}

\begin{remark}\label{rmk.intCP}
    If the boundary is merely Lipschitz, by argument in the above proof, we can still get the optimal bound of the number of interior critical points, namely, 
    \begin{equation*}
        \# (\{ x\in \Omega \cap B_{\theta R}: \nabla u(x) = 0 \}) \le C\log \frac{\| \nabla u \|_{L^2(\Omega \cap B_{R})}}{ \| \nabla u \|_{L^2(\Omega \cap B_{\theta R})}}.
    \end{equation*}
    However, the argument for the critical points on $\partial \Omega \cap B_{\theta R}$ does not work any more, simply because of the failure of Hopf's lemma on general Lipschitz boundaries. As we have mentioned earlier, in this case the critical points may accumulate on the boundary to an infinite number.
\end{remark}

\section{Estimate of doubling index}
\label{sec:DoublingIndex}

\subsection{Doubling index of stream functions}

 In this section, assume either $\Omega$ is $C^{1,\alpha}$ or convex. Let $A$  satisfy \eqref{ellipticity}, \eqref{symmetry} and \eqref{Lip}.
 Suppose $u$ is a nontrivial $A$-harmonic function in $B_R(0)\cap\Omega$,  satisfying the vanishing Neumann boundary condition on \(B_R(0) \cap \partial \Omega\). The doubling index of the $A$-harmonic functions has been defined in Subsection \ref{subsec:DefDoubling}.

Recall that the stream function $v$ satisfies the elliptic equation $-\nabla\cdot (\widehat{A} \nabla v) = 0$ in $\Omega \cap B_R$, subject to the vanishing Dirichlet boundary condition, where $\widehat{A} = (\det A)^{-1} A$. Then we can define the doubling index for $v$, similar to $u$, but with $A$ replaced by $\widehat{A}$. Precisely, we define the notation $\widehat{\mu}, \widehat{E}, \widehat{J}_{}, \widehat{N}_{}$ as follows:
\begin{equation*}
\begin{aligned}
     \widehat{\mu}(x_0,y)&=  \frac{(y-x_0)\cdot \widehat{A}(x_0)^{-1}\widehat{A}(y)\widehat{A}(x_0)^{-1}(y-x_0)}{(y-x_0)\cdot \widehat{A}(x_0)^{-1}(y-x_0)}, \\
     \widehat{E}(x_0,r)&= x_0+\widehat{A}^{\frac{1}{2}}(x_0)(B_r(0)),\\
    \widehat{J}_{v}(x_0,r)&=|\mathrm{det} \widehat{A}(x_0)|^{-\frac 12}\int_{\widehat{E} (x_0,r)\cap \Omega}\widehat{\mu}(x_0,y)v^2dy,\\
    \widehat{N}_{v}(x_0,r) & =\log \frac{\widehat{J}_{v}(x_0,2r)}{\widehat{J}_{v}(x_0,r)}.
\end{aligned}
\end{equation*}

Since $\widehat{A}$ is just a scalar multiple of $A$, we have the following relations.
\begin{equation*}
\begin{aligned}
\mu (x_0,y)=& \widehat{\mu}(x_0,y),\\
E(x_0,r)=&\widehat{E}(x_0,(\det A(x_0))^{-1/2}r),\\
    J_{u}(x_0,r)=&
   \det A(x_0) \widehat{J}_{u}(x_0,(\det A(x_0))^{-1/2}r),\\
   \widehat{N}_{u}(x_0,r) =& N_{u}(x_0, (\det A(x_0))^{-1/2} r).
\end{aligned}
\end{equation*}
The above relations show that the coefficient matrices $A$ and $\widehat{A}$ share the same form of the doubling index, up to a harmless dilation.

Finally we define the gradient doubling index by
\begin{equation*}
    N_{\nabla u}(x, r) = \log \frac{\| \nabla u \|_{L^2(E(x,2r) \cap \Omega)}}{\| \nabla u \|_{L^2(E(x,r)\cap \Omega)}}.
\end{equation*}
The definition for $N_{\nabla v}(x,r)$ is defined in the same way.

\subsection{Almost monotonicity}

In this subsection, we will establish the almost monotonicity of the boundary doubling index, which is useful in the estimate of level curves. We begin with two lemmas comparing the doubling index with gradient doubling index.
\begin{lemma}\label{lem.doubling.u<Du}
    For any $x\in \overline{\Omega} \cap B_R$ with $E_{2r}(x) \subset B_R$,
    \begin{equation}\label{est.u<Du}
        \frac{\| u \|_{L^2(E_{2r}(x) \cap \Omega)}} {\| u \|_{L^2(E_{r}(x) \cap \Omega)} } \le C\frac{\| \nabla u \|_{L^2(E_{2r}(x) \cap \Omega)}} {\| \nabla u \|_{L^2(E_{r/2}(x) \cap \Omega)} }.
    \end{equation}
 \end{lemma}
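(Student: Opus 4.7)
The plan is to combine a Caccioppoli inequality, which works without mean subtraction thanks to the vanishing Neumann condition, with a Poincar\'e-type bound, which in contrast does need mean subtraction, arranging the geometry of the balls so that the unwanted mean can be absorbed into $\|u\|_{L^{2}(E_{r}\cap\Omega)}$.

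First I would test the weak formulation $\int A\nabla u\cdot\nabla\varphi = 0$ against $\varphi = u\phi^{2}$, where $\phi$ is a smooth cutoff equal to $1$ on $E_{r/2}(x)$ and supported in $E_{r}(x)$. Since $\phi$ vanishes on $\partial E_{r}$ and $u$ has vanishing conormal derivative on $\partial\Omega \cap B_{R}$, all boundary contributions vanish. A standard Cauchy--Schwarz / AM--GM absorption then yields the Caccioppoli bound
\[
    \|\nabla u\|_{L^{2}(E_{r/2}\cap\Omega)}^{2} \le \frac{C}{r^{2}}\,\|u\|_{L^{2}(E_{r}\cap\Omega)}^{2},
\]
which will handle the denominator of the desired inequality.

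Next, to bound the numerator $\|u\|_{L^{2}(E_{2r}\cap\Omega)}$, let $\bar u$ be the average of $u$ over $E_{2r}(x)\cap\Omega$ and use the orthogonal decomposition $\|u\|^{2} = \bar u^{2}|E_{2r}\cap\Omega| + \|u-\bar u\|^{2}$. Because $\Omega$ is $C^{1,\alpha}$ or convex, the partial ball $E_{2r}(x)\cap\Omega$ is a Lipschitz (and John) domain uniformly in $x, r$, so Poincar\'e--Neumann gives $\|u-\bar u\|_{L^{2}(E_{2r}\cap\Omega)}^{2} \le Cr^{2}\|\nabla u\|_{L^{2}(E_{2r}\cap\Omega)}^{2}$. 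For the mean term I would use the identity $\bar u\,|E_{r}\cap\Omega| = \int_{E_{r}\cap\Omega} u - \int_{E_{r}\cap\Omega}(u-\bar u)$ together with Cauchy--Schwarz and the Poincar\'e bound, plus the fact that $|E_{2r}\cap\Omega|/|E_{r}\cap\Omega|$ is bounded uniformly on $C^{1,\alpha}$/convex domains, to conclude
\[
    \|u\|_{L^{2}(E_{2r}\cap\Omega)}^{2} \le C\,\|u\|_{L^{2}(E_{r}\cap\Omega)}^{2} + Cr^{2}\|\nabla u\|_{L^{2}(E_{2r}\cap\Omega)}^{2}.
\]
Dividing by $\|u\|_{L^{2}(E_{r}\cap\Omega)}^{2}$ and using the Caccioppoli bound to rewrite the gradient term, the right side becomes $\le C + C\|\nabla u\|_{L^{2}(E_{2r})}^{2}/\|\nabla u\|_{L^{2}(E_{r/2})}^{2}$. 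Since this last ratio is always $\ge 1$, the free constant is absorbed and taking square roots yields the claim.

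The main obstacle is the Poincar\'e step: the constant part of $u$ is invisible to $\nabla u$, so a naive Poincar\'e-type inequality cannot bound $\|u\|_{L^{2}}$ by $\|\nabla u\|_{L^{2}}$ in the Neumann setting. The resolution is to control the mean $\bar u$ by the $L^{2}$ norm on the smaller scale $E_{r}$, which is fortunately the exact quantity appearing in the denominator of the claimed inequality.
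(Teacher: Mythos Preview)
Your argument is correct and follows essentially the same route as the paper: a Caccioppoli inequality for the denominator, a Poincar\'e inequality on $E_{2r}(x)\cap\Omega$ for the numerator, and a separate treatment of the constant part of $u$. The only cosmetic differences are that the paper subtracts the point value $u(x)$ (bounded via the local $L^\infty$ estimate by $Cr^{-1}\|u\|_{L^2(E_r\cap\Omega)}$) rather than the mean $\bar u$ over $E_{2r}\cap\Omega$, and absorbs the leftover additive constant using the appendix lower bound $N_{\nabla u}(x,r)\ge c$ instead of the trivial inequality $\|\nabla u\|_{L^2(E_{2r})}\ge \|\nabla u\|_{L^2(E_{r/2})}$ that you used; your absorption is in fact simpler here.
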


\begin{proof}
Fix $x\in \overline{\Omega} \cap B_R$ such that $E_{2r}(x) \subset B_R$.
Using the triangle inequality and the elliptic regularity for Neumann boundary value problem, we have
\begin{equation*}
\begin{aligned}
    \| u \|_{L^2(E_{2r}(x) \cap \Omega)} & \le \| u - u(x) \|_{L^2(E_{2r}(x) \cap \Omega)} + |E_{2r}(x) \cap \Omega|^{1/2} |u(x)| \\
    & \le \| u - u(x) \|_{L^2(E_{2r}(x) \cap \Omega)} + C \| u \|_{L^2(E_r(x) \cap \Omega)}.
\end{aligned}
\end{equation*}
It follows that
\begin{equation}\label{est.dbl2Du}
    \frac{\| u \|_{L^2(E_{2r}(x) \cap \Omega)}} {\| u \|_{L^2(E_{r}(x) \cap \Omega)} } \le \frac{\| u - u(x) \|_{L^2(E_{2r}(x) \cap \Omega)}} {\| u \|_{L^2(E_{r}(x) \cap \Omega)} } + C.
\end{equation}

By the regularity of elliptic equation under Neumann boundary condition, we have
\begin{equation*}
    \| u - u(x) \|_{L^\infty(E_r(x) \cap \Omega)} \le C \| \nabla u \|_{L^2(E_{2r}(x) \cap \Omega)}.
\end{equation*}

Then, we apply the Poincar\'{e} inequality to get
\begin{equation*}
\begin{aligned}
    \| u - u(x) \|_{L^2(E_{2r}(x) \cap \Omega)} & \le \| u - \fint_{E_r(x)\cap \Omega} u \|_{L^2(E_{2r}(x) \cap \Omega)} + Cr|u(x) - \fint_{E_r(x)\cap \Omega} u| \\
    & \le Cr \| \nabla u \|_{L^2(E_{2r}(x) \cap \Omega)}.
\end{aligned}
\end{equation*}

On the other hand, the Caccioppoli inequality implies that
\begin{equation*}
    \| u \|_{L^2(E_{r}(x) \cap \Omega)} \ge c r \| \nabla u \|_{L^2(E_{r/2}(x) \cap \Omega)}.
\end{equation*}
Taking the last two inequalities into \eqref{est.dbl2Du}, we obtain 
\begin{equation*}
    \frac{\| u \|_{L^2(E_{2r}(x) \cap \Omega)}} {\| u \|_{L^2(E_{r}(x) \cap \Omega)} } \le C\frac{\| \nabla u \|_{L^2(E_{2r}(x) \cap \Omega)}} {\| \nabla u \|_{L^2(E_{r/2}(x) \cap \Omega)} } + C \le C\frac{\| \nabla u \|_{L^2(E_{2r}(x) \cap \Omega)}} {\| \nabla u \|_{L^2(E_{r/2}(x) \cap \Omega)} },
\end{equation*}
as desired, where in the last inequality, the additional constant $C$ has been absorbed due to Proposition \ref{prop.lowbd.NDu}.
\end{proof}

\begin{lemma}\label{lem.doubling.Du<u}
    Let $x\in\overline{\Omega} \cap B_R$ with $E_{2r}(x) \subset B_R$, and $u(y) = 0$ for some $y \in \overline{\Omega} \cap E_{r/2}(x) $. Then,
    \begin{equation}\label{est.Du<u}
         \frac{\| \nabla u \|_{L^2(E_{2r}(x) \cap \Omega)}} {\| \nabla u \|_{L^2(E_{r}(x) \cap \Omega)} } \le C \frac{\| u \|_{L^2(E_{4r}(x) \cap \Omega)}} {\| u \|_{L^2(E_{r}(x) \cap \Omega)} }.
    \end{equation}
 \end{lemma}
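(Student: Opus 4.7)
The plan is to mirror the argument of Lemma~\ref{lem.doubling.u<Du} with the roles of $u$ and $\nabla u$ interchanged: instead of (Caccioppoli gives $\|\nabla u\|\lesssim r^{-1}\|u\|$, triangle/regularity gives $\|u\|\lesssim r\|\nabla u\|+|u(x)|\cdot r$), here I will let the zero $u(y)=0$ play the role that $u(x)$ played before. Concretely I will upper-bound the numerator on the left-hand side by $r^{-1}\|u\|$ via Caccioppoli anchored at $y$, and lower-bound the denominator by $r^{-1}\|u\|$ via a Poincar\'e-type inequality anchored at $y$. Dividing the two gives the claim.

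\textbf{Step 1 (Caccioppoli with constant $c=u(y)=0$).} Since $u$ is $A$-harmonic in $E_{4r}(x)\cap\Omega$ with vanishing Neumann data, the standard Caccioppoli inequality (using a cutoff supported in $E_{4r}(x)$ that equals $1$ on $E_{2r}(x)$, with test function $(u-c)\eta^{2}$) gives for any constant $c$ that
\begin{equation*}
    \|\nabla u\|_{L^{2}(E_{2r}(x)\cap\Omega)}\le Cr^{-1}\|u-c\|_{L^{2}(E_{4r}(x)\cap\Omega)}.
\end{equation*}
Choosing $c=u(y)=0$ yields $\|\nabla u\|_{L^{2}(E_{2r}(x)\cap\Omega)}\le Cr^{-1}\|u\|_{L^{2}(E_{4r}(x)\cap\Omega)}$. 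This is the analogue of the Caccioppoli step used in Lemma~\ref{lem.doubling.u<Du}; the only role of $u(y)=0$ here is to license the replacement of the generic constant $c$ (which is typically the average) by $0$, so that the right-hand side is $\|u\|_{L^{2}}$ itself.

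\textbf{Step 2 (Poincar\'e-type inequality using $u(y)=0$).} The target is
\begin{equation*}
    \|u\|_{L^{2}(E_{r}(x)\cap\Omega)}\le Cr\|\nabla u\|_{L^{2}(E_{r}(x)\cap\Omega)}.
\end{equation*}
Decompose $u=(u-\bar u)+\bar u$ with $\bar u=\fint_{E_{r}(x)\cap\Omega}u$. Standard Poincar\'e on the John domain $E_{r}(x)\cap\Omega$ (which both convex and $C^{1,\alpha}$ domains satisfy locally) gives $\|u-\bar u\|_{L^{2}(E_{r}\cap\Omega)}\le Cr\|\nabla u\|_{L^{2}(E_{r}\cap\Omega)}$. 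To handle the mean, use that $u$ is continuous up to the boundary (De Giorgi--Nash--Moser for Neumann on $C^{1,\alpha}$/convex boundary) so that $\bar u=u(x^{\ast})$ for some $x^{\ast}\in E_{r}(x)\cap\overline{\Omega}$ by the intermediate value theorem for integrals; then $|\bar u|=|u(x^{\ast})-u(y)|$. Combining the Meyers--Gehring reverse H\"older estimate for $\nabla u$ with Morrey's embedding in two dimensions, one obtains $|u(x^{\ast})-u(y)|\le C\|\nabla u\|_{L^{2}(E_{r}(x)\cap\Omega)}$, so $|\bar u||E_{r}|^{1/2}\le Cr\|\nabla u\|_{L^{2}(E_{r}(x)\cap\Omega)}$, as required.

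\textbf{Step 3 (Combination).} Dividing the bound from Step~1 by that from Step~2 gives
\begin{equation*}
    \frac{\|\nabla u\|_{L^{2}(E_{2r}(x)\cap\Omega)}}{\|\nabla u\|_{L^{2}(E_{r}(x)\cap\Omega)}}\le \frac{Cr^{-1}\|u\|_{L^{2}(E_{4r}(x)\cap\Omega)}}{cr^{-1}\|u\|_{L^{2}(E_{r}(x)\cap\Omega)}}=C\frac{\|u\|_{L^{2}(E_{4r}(x)\cap\Omega)}}{\|u\|_{L^{2}(E_{r}(x)\cap\Omega)}},
\end{equation*}
which is \eqref{est.Du<u}. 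The main technical obstacle is Step~2: one needs the Poincar\'e inequality at a single point $y$, with matched radii on both sides. Since single points have vanishing $W^{1,2}$-capacity in the plane, this cannot hold for generic $W^{1,2}$ functions; it is the extra regularity of $A$-harmonic functions (Meyers--Gehring higher integrability plus continuity up to the $C^{1,\alpha}$/convex Neumann boundary) that makes the estimate work. The condition $y\in E_{r/2}(x)$ (strictly interior to $E_{r}(x)$) is precisely what allows those Hölder-regularity bounds to be applied uniformly.
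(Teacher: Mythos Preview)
Your approach is the same as the paper's, which simply says the proof ``is similar to the proof of Lemma~\ref{lem.doubling.u<Du} by the Caccioppoli inequality and Poincar\'e inequality''; your Step~1/Step~3 are exactly right and match that outline.

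There is one small wrinkle in your Step~2. The point $x^{\ast}$ realizing $\bar u$ can lie anywhere in $E_r(x)\cap\overline{\Omega}$, in particular arbitrarily close to $\partial E_r(x)$, whereas Meyers--Gehring only yields $\|\nabla u\|_{L^{p}}$ on a set \emph{compactly contained} in the region where you control $\|\nabla u\|_{L^{2}}$. So as written, the bound $|u(x^{\ast})-u(y)|\le C\|\nabla u\|_{L^{2}(E_{r}(x)\cap\Omega)}$ is not quite justified; Meyers would hand you $\|\nabla u\|_{L^{2}(E_{2r}(x)\cap\Omega)}$ on the right, which spoils the division in Step~3. The fix is immediate and does not require Meyers at all: apply the De~Giorgi--Nash--Moser local boundedness to the $A$-harmonic function $u-\bar u$ (Neumann data still vanish) to get
\[
\|u-\bar u\|_{L^\infty(E_{r/2}(x)\cap\Omega)} \le C\Big(\fint_{E_r(x)\cap\Omega}|u-\bar u|^{2}\Big)^{1/2} \le C\|\nabla u\|_{L^{2}(E_r(x)\cap\Omega)},
\]
and since $y\in E_{r/2}(x)$ this gives $|\bar u|=|\bar u-u(y)|\le C\|\nabla u\|_{L^{2}(E_r(x)\cap\Omega)}$ with the correct radius. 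This is precisely the role of the hypothesis $y\in E_{r/2}(x)$ that you flagged at the end, and it mirrors exactly how Lemma~\ref{lem.doubling.u<Du} used regularity to compare $u(x)$ with the mean.
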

 \begin{proof}
     This is similar to the proof of Lemma \ref{lem.doubling.u<Du} by the Caccioppoli inequality and Poincar\'{e} inequality.
 \end{proof}

Now recall the almost monotonicity formula for the elliptic equation with vanishing Dirichlet boundary condition. Particular, consider the stream function $v$, which is $\widehat{A}$-harmonic in $\Omega \cap B_R$ and $v = 0$ on $\partial \Omega \cap B_R$. By \cite[Remark 3.6]{ZZ23}, for either $\Omega$ is $C^{1,\alpha}$ or convex, we have
\begin{equation}\label{est.v.almostMnty}
    \sup_{0<s<r} \widehat{N}_v(x, s) \le C \widehat{N}_v(x,r),
\end{equation}
whenever $x\in \partial \Omega$ and $E(x,2r) \subset B_R$. By examining the proof in \cite{ZZ23}, we notice that the doubling index may be replaced by any $\lambda$-index with a fixed $\lambda > 1$, i.e., replacing $\widehat{N}_v(x,r)$ in \eqref{est.v.almostMnty} by 
\begin{equation*}
    \log \frac{\widehat{J}_v(x,\lambda r)}{\widehat{J}_v(x,r)}.
\end{equation*}
Also, notice that the factor 2 in \eqref{est.u<Du} and \eqref{est.Du<u} is not essential and can be replaced by any $\lambda>1$. This trick will be used whenever needed.

The next proposition establishes the almost monotonicity of $\nabla v$, which yields the almost monotonicity of $\nabla u$.
\begin{proposition}\label{prop.DvDu.AlmMnty}
    Let $v$ be the stream function of $u$ with $v = 0$ on $\partial \Omega \cap B_R$. Then for $x\in \partial \Omega \cap B_R$ and $E_{2r}(x) \cap B_R$,
    \begin{equation*}
        \sup_{0<s<r} N_{\nabla v}(x, s) \le C N_{\nabla v}(x,r).
    \end{equation*}
    As a corollary, we have
    \begin{equation}\label{est.Du.AlmostMnty}
        \sup_{0<s<r} N_{\nabla u}(x, s) \le C N_{\nabla u}(x,r).
    \end{equation}
\end{proposition}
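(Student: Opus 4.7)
My plan is to reduce the claim for $\nabla u$ to the corresponding claim for the stream function $v$, and then transfer the known almost monotonicity of the function doubling $\widehat N_v$ to the gradient doubling $N_{\nabla v}$ via the two-sided comparison provided by Lemmas \ref{lem.doubling.u<Du} and \ref{lem.doubling.Du<u}. The corollary \eqref{est.Du.AlmostMnty} for $\nabla u$ will follow immediately from the main claim on $\nabla v$: by \eqref{eq.Stream} we have $\nabla v = \mathcal{R} A \nabla u$, so $|\nabla v(y)|^2 = A(y) \nabla u(y) \cdot A(y) \nabla u(y) \simeq |\nabla u(y)|^2$ pointwise by symmetry and ellipticity. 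Since the ellipsoids $E(x, r)$ defined by $A(x)$ and by $\widehat A(x) = (\det A(x))^{-1} A(x)$ coincide up to the harmless dilation noted in Subsection 4.1, this yields $N_{\nabla u}(x, s) = N_{\nabla v}(x, s) + O(1)$, and the additive constant is absorbed using a uniform lower bound on the doubling index analogous to Proposition \ref{prop.lowbd.doubling}.

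For the main claim on $\nabla v$ at a boundary point $x \in \partial \Omega \cap B_R$, I exploit the fact that $v(x) = 0$, which makes both comparison lemmas directly applicable to $v$: Lemma \ref{lem.doubling.Du<u} with the choice $y = x$, and Lemma \ref{lem.doubling.u<Du} with the $|E_{2r}|^{1/2} |v(x)|$ term in its proof vanishing. They yield
\begin{equation*}
N_{\nabla v}(x, s) \le C + \widehat N_v(x, s) + \widehat N_v(x, 2s), \qquad \widehat N_v(x, s) \le C + N_{\nabla v}(x, s) + N_{\nabla v}(x, s/2),
\end{equation*}
together with the $\lambda$-index analogues available for any $\lambda > 1$. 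Feeding the known almost monotonicity $\sup_{0 < \rho < r} \widehat N_v(x, \rho) \le C_0 \widehat N_v(x, r)$ from \cite[Remark 3.6]{ZZ23} into the first inequality gives, for every $s \le r/2$,
\begin{equation*}
N_{\nabla v}(x, s) \le C + 2 C_0 \widehat N_v(x, r),
\end{equation*}
so the task reduces to bounding $\widehat N_v(x, r)$ in terms of $N_{\nabla v}(x, r)$.

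Substituting the last display at $s = r/2$ into the second comparison inequality produces an estimate of the form $\widehat N_v(x, r) \le 2C + N_{\nabla v}(x, r) + 2 C_0 \widehat N_v(x, r)$, which is circular if $2C_0 \ge 1$. Breaking this loop is the main obstacle. I plan to resolve it by exploiting the $\lambda$-index flexibility: replacing the factor $2$ in Lemma \ref{lem.doubling.u<Du} and in the almost monotonicity of $\widehat N_v$ by appropriately chosen $\lambda > 1$, and adjusting the inner scale via a large free parameter $\mu > 1$ in Caccioppoli, so that the coefficient in front of $\widehat N_v(x, r)$ on the right is made strictly less than one; the residual additive losses are then handled by the uniform lower bound on the doubling index. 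This yields $\widehat N_v(x, r) \le C N_{\nabla v}(x, r)$, and combined with the preceding display gives $\sup_{0 < s \le r/2} N_{\nabla v}(x, s) \le C N_{\nabla v}(x, r)$. The remaining window $s \in (r/2, r]$ is handled directly by the set inclusions $E_{r/2} \subset E_s \subset E_{2s} \subset E_{2r}$, which give $N_{\nabla v}(x, s) \le N_{\nabla v}(x, r) + N_{\nabla v}(x, r/2)$ for such $s$, with the second term already controlled by the previous step.
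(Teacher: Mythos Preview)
Your overall strategy matches the paper's: compare $N_{\nabla v}$ with $\widehat N_v$ via Lemmas \ref{lem.doubling.u<Du}--\ref{lem.doubling.Du<u}, feed in the almost monotonicity \eqref{est.v.almostMnty}, and deduce the corollary from $|\nabla u|\simeq|\nabla v|$. The treatment of the window $s\in(r/2,r]$ is also the same.

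The one place where your write-up diverges is the resolution of the circularity, and the mechanism you describe (``make the coefficient in front of $\widehat N_v(x,r)$ strictly less than one'' by taking a large $\mu$) is not what actually works: the constant $C_0$ from \eqref{est.v.almostMnty} is fixed by \cite{ZZ23} and cannot be made small by tweaking $\lambda$ or $\mu$, and enlarging the Caccioppoli gap $\mu$ only pushes the inner radius \emph{further} below $r$. The paper's resolution---which your $\lambda$-index remark in fact points to---is cleaner and avoids the loop entirely rather than shrinking a coefficient. One applies the almost monotonicity in its $\sqrt 2$-index form to bound, for $s\le r/2$,
\[
\widehat N_v(x,s)+\widehat N_v(x,2s)\ \le\ C\,\log\frac{\widehat J_v(x,2r)}{\widehat J_v(x,\sqrt 2\,r)},
\]
and then applies Lemma \ref{lem.doubling.u<Du} at the top scale with outer radius $2r$ and inner radius $\sqrt 2\,r$: Poincar\'e controls the numerator by $\|\nabla v\|_{L^2(E_{2r})}$, and Caccioppoli shrinking from $\sqrt 2\,r$ to $r$ controls the denominator from below by $\|\nabla v\|_{L^2(E_r)}$. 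The right-hand side is therefore exactly $C\,N_{\nabla v}(x,r)$, with no residual $\widehat N_v$ term to absorb. In your parametrization this amounts to choosing $\lambda=\mu=\sqrt 2$ so that $\lambda\mu=2$; the point is not that some coefficient becomes $<1$, but that the circular term never appears.
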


\begin{proof}
    Let $s\le r/2$. Then by Lemma \ref{lem.doubling.Du<u} and the almost monotonicity of $\sqrt{2}$-index,
    \begin{equation*}
        N_{\nabla v}(x,s) \le C(\widehat{N}_v(x,2s) + \widehat{N}_v(x,s)) \le C \log \frac{\widehat{J}_v(x,2r)}{\widehat{J}_v(x,\sqrt{2} r)}.
    \end{equation*}
    By Lemma \ref{lem.doubling.u<Du}, the right-hand side of the above inequality is bounded by $CN_{\nabla v}(x,r)$. Thus,
    \begin{equation*}
        \sup_{0<s\le r/2} N_{\nabla v}(x,s) \le CN_{\nabla v}(x,r).
    \end{equation*}
    Finally, the estimate for the range $s \in (r/2,r)$ follows readily by noticing $N_{\nabla v}(x,s) \le N_{\nabla v}(x,r/2) + N_{\nabla v}(x,r)$. The estimate \eqref{est.Du.AlmostMnty} follows from the fact $|\nabla u| \simeq |\nabla v|$.
\end{proof}

The almost monotonicity of $\nabla u$ then can be transferred to $u$ in the following proposition.
\begin{proposition}\label{prop.u.AlmMnty}
Let $x\in \partial \Omega \cap B_R$ and $E_{2r}(x) \subset B_R$. Then either $Z(u) \cap E_{r/2}(x) = \varnothing$, or $N_u(x,s)$ is almost monotonic for $0 < s <r$ in the sense that
\begin{equation*}
    \sup_{0 < s <r } N_u(x,s) \le C N_u(x,r),
\end{equation*}
where $C$ depends only on $A$ and $\Omega$.
\end{proposition}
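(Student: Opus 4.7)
The plan is to derive almost monotonicity of $N_u$ from the already-established almost monotonicity of $N_{\nabla u}$ in Proposition \ref{prop.DvDu.AlmMnty} by shuttling back and forth between $u$-doubling and $\nabla u$-doubling via Lemmas \ref{lem.doubling.u<Du} and \ref{lem.doubling.Du<u}. The hypothesis $Z(u) \cap E_{r/2}(x) \ne \varnothing$ is exactly what is needed to invoke Lemma \ref{lem.doubling.Du<u} at the top scale $r$, converting $\nabla u$-doubling back into $u$-doubling; without a zero of $u$ somewhere in $E_{r/2}(x)$ there is no way to control $u$ in $L^2$ by $\nabla u$ in $L^2$ (up to additive constants), so the dichotomy in the statement is natural.

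For $s \in (0,r)$, I would first apply Lemma \ref{lem.doubling.u<Du} to obtain
\[
N_u(x,s) \;\le\; C + \log \frac{\|\nabla u\|_{L^2(E_{2s}(x) \cap \Omega)}^2}{\|\nabla u\|_{L^2(E_{s/2}(x) \cap \Omega)}^2}.
\]
The right-hand side is a ``$4$-index'' of $\nabla u$ at scale $s/2$, which decomposes into a uniformly bounded number of $\tfrac{3}{2}$-indices of $\nabla u$ at scales comparable to $s$. By the almost monotonicity of Proposition \ref{prop.DvDu.AlmMnty} (which, as noted before Proposition \ref{prop.DvDu.AlmMnty}, holds for any $\lambda$-index with $\lambda>1$), each of these $\tfrac{3}{2}$-indices is bounded by a constant multiple of
\[
\log \frac{\|\nabla u\|_{L^2(E_{3r/2}(x) \cap \Omega)}^2}{\|\nabla u\|_{L^2(E_r(x) \cap \Omega)}^2}.
\]

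Then I would invoke Lemma \ref{lem.doubling.Du<u} in its $\lambda$-version with factors $\lambda_1 = \tfrac{3}{2}$ and $\lambda_2 = 2$, both of which fit inside the given $E_{2r}(x) \subset B_R$ so that no additional room on the domain is required. Combined with the zero-set hypothesis, this gives
\[
\log \frac{\|\nabla u\|_{L^2(E_{3r/2}(x) \cap \Omega)}^2}{\|\nabla u\|_{L^2(E_r(x) \cap \Omega)}^2} \;\le\; C + N_u(x,r).
\]
Chaining the three steps yields $N_u(x,s) \le C\,N_u(x,r) + C$ for every $s \in (0,r)$, and the additive constant is absorbed using the standard lower bound $N_u(x,r) \ge c > 0$ referenced as \textsc{Proposition} \ref{prop.lowbd.doubling}.

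The main technical obstacle is bookkeeping the $\lambda$-indices so that every intermediate scale stays inside $E_{2r}(x) \subset B_R$, the common domain restriction of Proposition \ref{prop.DvDu.AlmMnty}, Lemma \ref{lem.doubling.u<Du}, and Lemma \ref{lem.doubling.Du<u}. The crucial choice is to apply Lemma \ref{lem.doubling.Du<u} with $\lambda_2 = 2$ rather than its native $\lambda_2 = 4$: this avoids the stronger hypothesis $E_{4r}(x) \subset B_R$, which is not available, while still producing an inequality strong enough to close the loop against the $\tfrac{3}{2}$-index version of Proposition \ref{prop.DvDu.AlmMnty}.
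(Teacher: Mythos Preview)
Your proposal is correct and follows essentially the same route as the paper: both arguments pass from $N_u$ at scale $s$ to a $\lambda$-index of $\nabla u$ via Lemma~\ref{lem.doubling.u<Du}, push this up to scale $r$ by the almost monotonicity of Proposition~\ref{prop.DvDu.AlmMnty}, convert back to $N_u(x,r)$ via Lemma~\ref{lem.doubling.Du<u} (using the zero of $u$ in $E_{r/2}(x)$), and absorb the additive constant with Proposition~\ref{prop.lowbd.doubling}. The only cosmetic difference is the choice of intermediate factors: the paper uses $\sqrt{2}$ and $\sqrt{3}$ where you use $3/2$ and $2$, and the paper explicitly restricts to $s\in(0,r/2)$ before handling $s\in[r/2,r)$ trivially, a bookkeeping step you should make explicit as well.
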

\begin{proof}
    Without loss of generality, assume $x = 0$ and $A(0) = \mathbin{I}$, and thus $E_{r/2}=B_{r/2}$. 
 Thus, Lemma \ref{lem.doubling.Du<u} implies
    \begin{equation*}
        \log \frac{\| \nabla u\|_{L^2(B_{\sqrt{2}r} \cap \Omega)}}{\| \nabla u \|_{L^2(B_{r} \cap \Omega)}} \le  \log \frac{\| u\|_{L^2(B_{2r} \cap \Omega)}}{ \| u \|_{L^2(B_{r} \cap \Omega)}} + C.
    \end{equation*}
    It follows from Lemma \ref{lem.doubling.u<Du} , Proposition \ref{prop.DvDu.AlmMnty} and the above inequality that, for any $s\in (0,r/2)$,
    \begin{equation*}
    \begin{aligned}
        \log \frac{\| u\|_{L^2(B_{2s} \cap \Omega)}}{\|  u \|_{L^2(B_{\sqrt{3}s} \cap \Omega)}} 
        & \le C + \log \frac{\| \nabla u\|_{L^2(B_{2s} \cap \Omega)}}{\| \nabla u \|_{L^2(B_{\sqrt{2}s} \cap \Omega)}} \\
        & \le C + \log \frac{\| \nabla u\|_{L^2(B_{\sqrt{2}r} \cap \Omega)}}{\| \nabla u \|_{L^2(B_{r} \cap \Omega)}} \\
        & \le C + C\log \frac{\| u\|_{L^2(B_{2r} \cap \Omega)}}{ \| u \|_{L^2(B_{r} \cap \Omega)}}. \\
    \end{aligned}
    \end{equation*}
    Combining this with Proposition \ref{prop.lowbd.doubling} implies the desired estimate  .
\end{proof}

\subsection{Optimal upper bound}

In this subsection, we prove Theorem \ref{thm.Doubling}.
\begin{proposition}\label{prop.IntDoubling.Du}
    Assume $A(0) = I$. Let $u$ be $A$-harmonic in $\Omega \cap B_R$ satisfying the vanishing Neumann boundary condition on $\partial \Omega \cap B_R$. Then for $x\in \partial \Omega \cap B_{cR}$ and $r\in (0,R/8)$
    \begin{equation*}
        N_{\nabla u}(x, r) \le CN_{\nabla u}(0,R/2).
    \end{equation*}
\end{proposition}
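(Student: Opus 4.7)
The strategy is to reduce to the stream function $v$, which solves a Dirichlet problem on $\Omega\cap B_R$, and then exploit the Dirichlet theory already available from \cite{ZZ23}. Since $|\nabla u|\simeq |\nabla v|$ on $\Omega\cap B_R$ by \eqref{eq.Stream}, the quantities $N_{\nabla u}(y,s)$ and $N_{\nabla v}(y,s)$ differ only by additive constants, which can be absorbed via the trivial lower bound on the doubling index (Proposition \ref{prop.lowbd.NDu}). It therefore suffices to prove the analogous estimate $N_{\nabla v}(x,r) \le CN_{\nabla v}(0,R/2)$, modulo such absorbable additive constants.

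First I would fix $c$ small (e.g.\ $c=1/8$) so that for every $x\in\partial\Omega\cap B_{cR}$ the ellipsoid $E(x,R/2)$ stays inside $B_R$ and stays close to the Euclidean ball $B_{R/2}(x)$ (this uses $A(0)=I$ together with the Lipschitz continuity of $A$, making all ellipsoids near $0$ quasi-Euclidean up to a fixed distortion). Applying the boundary almost monotonicity of $N_{\nabla v}$ from Proposition \ref{prop.DvDu.AlmMnty} yields
\begin{equation*}
N_{\nabla v}(x,r) \le C\, N_{\nabla v}(x,R/8)\qquad\text{for every } r\in(0,R/8).
\end{equation*}
Because $x\in\partial\Omega\cap B_R$ and $v\equiv 0$ there, we have $v(x)=0$, so Lemma \ref{lem.doubling.Du<u} combined with the Dirichlet almost monotonicity \eqref{est.v.almostMnty} of $\widehat{N}_v$ from \cite{ZZ23} upgrades this to
$N_{\nabla v}(x,R/8)\le C\,\widehat{N}_v(x,R/4)$.

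Next I would transfer this Dirichlet doubling index from $x$ to $0$ by a geometric comparison. With $c$ small enough and $A(0)=I$, one arranges containments of the form $E(x,R/2)\subset B_{\rho_1}(0)$ and $E(x,R/4)\supset B_{\rho_2}(0)$ for some radii $\rho_2<\rho_1$ with $\rho_1\le 5R/8$ and $\rho_2\simeq R$. This produces
\begin{equation*}
\widehat{N}_v(x,R/4)\le C+\log\frac{\|v\|_{L^2(B_{\rho_1}\cap\Omega)}^2}{\|v\|_{L^2(B_{\rho_2}\cap\Omega)}^2},
\end{equation*}
whose right-hand side decomposes into a bounded telescoping sum of Dirichlet doubling indices $\widehat{N}_v(0,s_k)$ at dyadic scales $s_k<R/2$, each in turn controlled by $C\,\widehat{N}_v(0,R/2)$ by the Dirichlet almost monotonicity \eqref{est.v.almostMnty} at $0$.

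Finally, Lemma \ref{lem.doubling.u<Du} applied at $x=0$ with $r=R/2$, together with the almost monotonicity of $N_{\nabla v}$ at $0$ (Proposition \ref{prop.DvDu.AlmMnty}) to unify scales, gives $\widehat{N}_v(0,R/2)\le C\,N_{\nabla v}(0,R/2)+C$. Combining the chain of inequalities with $|\nabla u|\simeq|\nabla v|$ and absorbing additive constants via Proposition \ref{prop.lowbd.NDu} yields $N_{\nabla u}(x,r)\le C\,N_{\nabla u}(0,R/2)$, as desired. The main technical hurdle is the bookkeeping in the transfer step: selecting the constants $c$, $R_x$, $\rho_1$, $\rho_2$ so that all ellipsoid containments at $x$ and $0$ hold simultaneously, that every scale involved stays strictly below $R/2$ so the almost monotonicity at $0$ applies, and that the telescoping of Dirichlet doubling indices proceeds with only a bounded number of terms and a bounded loss.
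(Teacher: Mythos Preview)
Your proposal is correct but takes a more circuitous route than the paper. Both arguments share the same skeleton: use boundary almost monotonicity to replace $r$ by a scale $\simeq R$, then transfer the center from $x$ to $0$ via ellipsoid containments and another application of almost monotonicity at $0$. The difference is that you detour through the stream function $v$ and the Dirichlet doubling index $\widehat{N}_v$, invoking Lemmas~\ref{lem.doubling.u<Du} and~\ref{lem.doubling.Du<u} to convert back and forth between $N_{\nabla v}$ and $\widehat{N}_v$, and absorbing additive constants via Proposition~\ref{prop.lowbd.NDu}. The paper avoids all of this: since Proposition~\ref{prop.DvDu.AlmMnty} already states the almost monotonicity of $N_{\nabla u}$ itself (the corollary \eqref{est.Du.AlmostMnty}), one can work directly with $N_{\nabla u}$ throughout. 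Choosing $c$ small so that $E_{R/16}(0)\subset E_{R/8}(x)$ and $E_{R/4}(x)\subset E_R(0)$, the paper simply writes
\[
N_{\nabla u}(x,r)\le C\,N_{\nabla u}(x,R/8)\le C\log\frac{\|\nabla u\|_{L^2(E_R(0)\cap\Omega)}}{\|\nabla u\|_{L^2(E_{R/16}(0)\cap\Omega)}}\le C\,N_{\nabla u}(0,R/2),
\]
using \eqref{est.Du.AlmostMnty} at $x$, the containments, and then \eqref{est.Du.AlmostMnty} at $0$ (to control the telescoped sum from $R/16$ to $R$). Your approach works, but the conversion to $\widehat{N}_v$ and back is unnecessary bookkeeping once you realize that the gradient doubling index already enjoys the needed almost monotonicity.
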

\begin{proof}
    By choosing $c$ small such that if $x\in B_{cR} \cap \Omega$, we have $E_{R/16}(0) \subset  E_{R/8}(x)$ and $E_{R/4}(x) \subset E_{R}(0) $. Then for $r< R/8$, by using \eqref{est.Du.AlmostMnty} twice,
    \begin{equation*}
        N_{\nabla u}(x,r) \le C N_{\nabla u}(x,R/8) \le C \log \frac{\| \nabla u \|_{L^2(E_R(0) \cap \Omega}}{\| \nabla u \|_{L^2(E_{R/16}(0)\cap \Omega}} \le CN_{\nabla u}(0,R/2).
    \end{equation*}
    This obtains the desired estimate.
\end{proof}

The next lemma reduces the upper bound of interior doubling index to the boundary doubling index.
\begin{lemma}\label{lem.bdry-to-interior}
    Let $u$ be the same as Proposition \ref{prop.IntDoubling.Du}. Then there exists $c > \theta >0$ such that
    \begin{equation*}
        \sup_{\substack{x\in B_{\theta R} \cap \Omega \\ 0<r< \theta R} } N_u(x,r) \le C \sup_{\substack{x\in B_{cR}\cap \partial \Omega \atop 0< r< cR}} N_u(x, r).
    \end{equation*} 
\end{lemma}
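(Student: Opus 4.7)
The plan is to patch two complementary estimates for $N_u(x,r)$ based on the size of $r$ relative to $d:=\dist(x,\partial\Omega)$. Fix $x\in B_{\theta R}\cap\Omega$ and $r\in(0,\theta R)$, and pick $x^*\in\partial\Omega$ with $|x-x^*|=d$. Since $0\in\partial\Omega$, one has $|x^*|\le 2|x|\le 2\theta R$, so $x^*\in B_{cR}\cap\partial\Omega$ whenever $c\ge 2\theta$. Let
\begin{equation*}
    M:=\sup\{N_u(y,s):y\in B_{cR}\cap\partial\Omega,\ s\in(0,cR)\},
\end{equation*}
and let $\theta_*,c_*$ denote the internal constants of Proposition~\ref{prop.IntDoubling}. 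Fix once and for all a constant $K\ge\max(c_*/\theta_*,\,2\Lambda,\,2c_*)$; we then take $\theta$ and $c$ small enough (relative to $c_*/K$) so that $R':=Kd/c_*$ satisfies $B_{R'}(x^*)\subset B_R$ and $R'<r_0$, and $c\ge 2\theta$ holds. The goal is $N_u(x,r)\le CM$.

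For $r\in(0,Kd)$, apply the translate of Proposition~\ref{prop.IntDoubling} obtained by replacing the origin with $x^*$, using outer radius $R'$. The translation is legitimate since every step of the proof (flattening, reflection, and the Stoilow-type quasiconformal estimates) is translation-invariant; its hypotheses hold because $u$ satisfies the vanishing Neumann condition on $B_{R'}(x^*)\cap\partial\Omega\subset B_R\cap\partial\Omega$, the non-tangential cone condition $\dist(x,\partial\Omega)\ge|x-x^*|/2$ reduces to $d\ge d/2$, and $x\in B_{\theta_*R'}(x^*)$ follows from $K\ge c_*/\theta_*$. The proposition then yields
\begin{equation*}
    N_u(x,r)\le C\bigl(1+\log(R'/\max\{d,r\})\bigr)\log\frac{\|u\|_{L^2(B_{R'}(x^*)\cap\Omega)}}{\|u\|_{L^2(B_{\theta_*R'}(x^*)\cap\Omega)}}.
\end{equation*}
Since $\max\{d,r\}\ge d=c_*R'/K$ and $r<Kd=c_*R'$, the logarithmic prefactor is bounded by $1+\log(K/c_*)\le C$. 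The remaining $L^2$-ratio telescopes dyadically into a bounded number of boundary doubling indices $N_u(x^*,s)$ with $s\in[\theta_*R',R']\subset(0,cR)$, each at most $M$; hence $N_u(x,r)\le CM$ in this range.

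For $r\ge Kd$, the choice $K\ge 2\Lambda$ ensures $|x-x^*|=d\le r/(2\Lambda)$, and a short geometric check yields the ellipsoid inclusions
\begin{equation*}
    E_{r/(2\Lambda)}(x^*)\subset E_r(x),\qquad E_{2r}(x)\subset E_{3\Lambda r}(x^*).
\end{equation*}
Combined with the uniform upper and lower bounds on $\mu(x^*,\cdot)$ and $|\det A(x^*)|$ in the definition of $J_u$, these give $J_u(x,r)\gtrsim J_u(x^*,r/(2\Lambda))$ and $J_u(x,2r)\lesssim J_u(x^*,3\Lambda r)$, so
\begin{equation*}
    N_u(x,r)\le C+\log\frac{J_u(x^*,3\Lambda r)}{J_u(x^*,r/(2\Lambda))}.
\end{equation*}
The right-hand logarithm is a telescoping sum of a bounded number of boundary indices $N_u(x^*,\cdot)$ at scales in $[r/(2\Lambda),3\Lambda r]$, each $\le M$ provided $3\Lambda r\le cR$; this follows once $\theta$ is small relative to $c/\Lambda$. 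Hence $N_u(x,r)\le CM$ again, and the two regimes cover $(0,\theta R)$.

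The only real work is parameter bookkeeping---ensuring $\theta$, $c$, and $K$ are chosen consistently so that the translated Proposition~\ref{prop.IntDoubling} applies, every boundary scale appearing in either estimate remains within $(0,cR)$, and the two ranges $(0,Kd)$ and $[Kd,\theta R)$ cover $(0,\theta R)$ without gap. No new analytic ingredient beyond Proposition~\ref{prop.IntDoubling} and elementary ellipsoid geometry is needed.
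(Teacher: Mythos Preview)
Your proof is correct and follows essentially the same approach as the paper: both split according to the size of $r$ relative to $d(x)=\dist(x,\partial\Omega)$, invoke Proposition~\ref{prop.IntDoubling} (translated to the nearest boundary point) in one regime, and handle the complementary regime by elementary means. The only cosmetic difference is the location of the cut: the paper splits at $r\sim cd(x)$ with small $c$, using Proposition~\ref{prop.IntDoubling} for large $r$ and interior almost monotonicity for small $r$, whereas you split at $r\sim Kd$ with large $K$, using the proposition for small $r$ and a direct ellipsoid-inclusion argument for large $r$; since your large-$r$ argument is exactly Case~1 of the proof of Proposition~\ref{prop.IntDoubling}, the two organizations are equivalent.
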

\begin{proof}
    This is a corollary of Proposition \ref{prop.IntDoubling}. Let $x\in B_{\theta R} \cap \Omega$. Let $d(x)  = \dist(x,\partial \Omega)$ and $z$ be a point on $\partial \Omega$ such that $|x-z| = d(x)$. Clearly, $z\in B_{cR}\cap \partial \Omega$, provided that $2\theta < c$. By Proposition \ref{prop.IntDoubling}, for $r > cd(x)$, we have
    \begin{equation*}
        N_u(x,r) \le C\bigg(1 + \log \frac{c^{-1} r}{\max \{ d(x), r \}} \bigg) \log \frac{\| u \|_{L^2(B_{c^{-1}r}(z))}} {\| u \|_{L^2(B_{\theta c^{-1}r}(z))}}.
    \end{equation*}
    Let $r<\theta R$ and $\theta \le c^2$. Then
    \begin{equation}\label{est.dbl.TouchBdry}
        \sup_{cd(x) < r<\theta R} N_u(x,r) \le C\sup_{0<r<cR} N_u(z,r).
    \end{equation}
    Now, for $r< cd(x)$ (let $c$ be small such that $E(x,2cd(x) \subset \Omega$), we simply use the well-known interior bound of doubling index to get
    \begin{equation}\label{est.OffBdry}
        \sup_{0<r<cd(x)} N_u(x,r) \le CN_u(x, cd(x)).
    \end{equation}
    Combining \eqref{est.dbl.TouchBdry} and \eqref{est.OffBdry} and taking the supremum over all $x\in B_{\theta R} \cap \Omega$, we get the desired estimate.
\end{proof}

\begin{proof}[Proof of Theorem \ref{thm.Doubling}]
  It follows from Lemma \ref{lem.bdry-to-interior}, Lemma \ref{lem.doubling.u<Du} and Proposition \ref{prop.IntDoubling.Du} in order that 
\begin{equation}\label{est.BoundByGradDbl}
        \sup_{\substack{x\in B_{\theta R} \cap \Omega\\ 0<r< \theta R} } N_u(x,r) \le CN_{\nabla u}(0,R/2).
    \end{equation}
    
    Now if $u(y) = 0$ for some $y \in B_{R/8}\cap \overline{\Omega}$, then \eqref{est.UnifDoubling} is a consequence of \eqref{est.BoundByGradDbl} and Lemma \ref{lem.doubling.Du<u}.  On the other hand, if $Z(u) \cap B_{R/8} = \varnothing$, then $u$ does not change sign in $\overline{\Omega} \cap B_{R/8}$. By the Harnack inequality\footnote{This can be seen easily by flattening the Lipschitz boundary followed by an even extension, which reduces the boundary Harnack inequality to the classical interior Harnack inequality.}, we have
    \begin{equation*}
        \sup_{\substack{x\in B_{\theta R} \cap \Omega\\ 0<r< \theta R} } N_u(x,r) \le C,
    \end{equation*}
    for some universal constant $C$ depending only on $A$ and $\Omega$. Since by Proposition \ref{prop.lowbd.NDu} the right-hand side of \eqref{est.BoundByGradDbl} has a universal lower bound, it holds true also in this situation. This ends the proof.
\end{proof}

\section{Length of level curves} \label{sec:levelset}
Throughout this section, we assume $u$ and $\Omega$ are as specified in Section \ref{sec:DoublingIndex}. 
Our aim is to prove an upper bound for the total length of level curves in terms of the doubling index. Since the level set $Z_t(u) := \{ u = t\}$ is exactly the nodal curve of $u - t$ (i.e., $Z_0(u-t)$), which is still $A$-harmonic, it is sufficient to consider the nodal curve of $u$.
The proof follows the line of \cite{LMNN21} (also see \cite{ZZ23}), with modifications adapted to the Neumann boundary condition.

\subsection{Domain decomposition and maximal doubling index}

Define a standard rectangle by $Q_0=[-\frac{1}{2},\frac{1}{2})\times [-8,8).$ Throughout this section, all the boundary rectangles considered are translated and rescaled versions of $Q_0$, centered on the boundary $\partial \Omega$.  Let $\pi(Q)$ be the vertical projection of $Q$ on $\mathbb{R}.$ For example, $\pi(Q_0) = [-\frac{1}{2},\frac{1}{2}).$
Denote the side length of $Q_0$ and $\pi(Q_0)$ by $\ell_{Q_0}=\ell(\pi(Q_0))=1.$

Without loss of generality, we assume $\partial \Omega$ is locally given by $x_2 = \phi(x_1)$ with $\phi(0) = 0$, $|\phi'| \le 1$ and  $\Omega = \{(x_1, x_2): x_2 > \phi(x_1) \}$.
Let $Q$ be a boundary rectangle centered at $x_Q \in \partial \Omega$ and $Q'=\pi(Q)$.
We partition $Q'$ into $2^k$ small equal intervals $\left \{ q' \right \} $ with side length $\ell(q')=2^{-k}\ell_Q$. For each $q'$, there is a unique boundary rectangle $q$ such that $\pi(q)=q'$ and the center of $q$ lies on $\partial\Omega\cap Q.$ Furthermore, we cover $\Omega \cap \pi^{-1}(q')\cap Q\setminus q$ by at most $8\times 2^k$ disjoint squares with side length $2^{-k}\ell_Q$. Let $\mathbb{B}_k(Q)$ be the collection of boundary rectangles and $\mathbb{I}_k(Q)$ be the collection of interior squares that intersect with $\Omega\cap Q$.

There is a simple fact about the above decomposition :
\begin{equation*}
   \min_{q\in \mathbb{I}_k(Q)}\text{dist}(q,Q\cap\partial\Omega)\ge 2\ell_q . 
\end{equation*}

Next, we define the maximal doubling index of $u$. Let $\theta$ be as given in  Theorem \ref{thm.Doubling}
and $S$ be the smallest integer such that $2^S > \theta^{-1}$.

Let $Q$ be a boundary rectangle contained in $B_{\theta R}$. Denote by $\ell_Q$ the side length of $Q$. We define the maximal doubling index of $u$ in $Q$ by 
\begin{equation}\label{defmaximal}
   N_{u}^{\partial }(Q)=\sup_{x\in Q\cap\partial\Omega, 0<r<
\ell_Q}\sum_{i = 0}^{S-1}N_u(x,2^i r)=\sup_{x\in Q\cap\partial\Omega,0<r<
\ell_Q}\log\frac{\int_{E(x,2^S r)\cap\Omega}\mu u^2}{\int_{E(x,r)\cap\Omega}\mu u^2}. 
\end{equation}
Clearly, if $q\in \mathbb{B}_k(Q)$, then $N_u^\partial(q) \le N_u^\partial(Q)$.

Let $Q$ be an interior square. Denote by $\ell_Q$ the side length of $Q$. We define the interior maximal doubling index of $u$ in $Q$ by 
\begin{equation*}
   N_{u}^*(Q)=\sup_{x\in Q,0<r<
\ell_Q}N_u(x,r) . 
\end{equation*}

\subsection{Drop of doubling index versus absence of nodal curves}
We restrict ourselves in $B_{\theta R} \cap \Omega$. Let $Q$ be a boundary rectangle centered at a point on $B_{\theta R} \cap \partial \Omega$ and contained in $B_{\theta R}$. 
The following property of boundary maximal doubling index is in the same spirit of \cite[Lemma 7]{LMNN21} for Dirichlet problem.
\begin{proposition}\label{prop.DoublingDrop}
 Let $u$ and $Q$ be given as above.   Then there exists $ k_0, N_0$ such that if 
    \begin{equation*}
      N_u^\partial (Q) > N_0, \quad \text{and} \quad k > k_0, 
    \end{equation*}
    then we can find at least one $q \in \mathbb{B}_k(Q)$ such that either $N_u^\partial (q) \le \frac12 N_u^\partial (Q)$ or $Z(u)\cap q$ is empty.
\end{proposition}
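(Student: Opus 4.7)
I argue by contradiction and reduce the Neumann problem for $u$ to the Dirichlet problem for its stream function $v$, for which the analogous drop of boundary doubling index is already known from \cite{ZZ23} (following \cite[Lemma 7]{LMNN21}).

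Suppose the conclusion fails: every $q \in \mathbb{B}_k(Q)$ satisfies \emph{both} $N_u^\partial(q) > \frac12 N_u^\partial(Q)$ and $Z(u)\cap q \neq \varnothing$. Under this standing hypothesis, $u$ has zeros densely distributed along $Q \cap \partial \Omega$ at the scale $\ell_q$, so the hypothesis of Lemma \ref{lem.doubling.Du<u} is available at every scale $r \gtrsim \ell_q$. At scales $r$ so small that $E_{r/2}(x)$ contains no zero of $u$, the boundary Harnack inequality forces the corresponding doubling $N_u(x,r)$ to be $O(1)$, so such scales cannot realize the supremum in \eqref{defmaximal} once $N_u^\partial(Q) > N_0$ is large. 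Combined with Lemma \ref{lem.doubling.u<Du} and the pointwise equivalence $|\nabla u| \simeq |\nabla v|$ from \eqref{eq.Stream}, this yields two-sided comparisons
$$N_u^\partial(Q) \le C\widehat N_v^\partial(Q) + C, \qquad N_u^\partial(q) \le C\widehat N_v^\partial(q) + C,$$
together with the reverse inequalities (with the roles of $u$ and $v$ swapped), where $\widehat N_v^\partial$ is defined in the same manner as $N_u^\partial$ but using $\widehat N_v$ introduced in Section \ref{sec:DoublingIndex}.

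Since $v$ is $\widehat A$-harmonic and vanishes on $\partial\Omega \cap B_R$, the corresponding Dirichlet drop statement in \cite{ZZ23}, iterated if necessary to sharpen the drop factor from $\frac12$ to an arbitrarily small $\alpha \in (0,1)$, provides constants $N_1(\alpha), k_1(\alpha)$ with the property that $\widehat N_v^\partial(Q) > N_1$ and $k > k_1$ force the existence of some $q_* \in \mathbb{B}_k(Q)$ with $\widehat N_v^\partial(q_*) \le \alpha \widehat N_v^\partial(Q)$. Choosing $\alpha$ small enough that $C^2 \alpha \le \tfrac14$, and then $N_0$ large enough that the additive errors accumulated in the two comparisons are absorbed into $\tfrac14 N_u^\partial(Q)$, we deduce $N_u^\partial(q_*) \le \tfrac12 N_u^\partial(Q)$, contradicting the standing hypothesis.

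The main obstacle will be establishing the \emph{uniform} two-sided comparison of doubling indices between $u$ and $v$ on every subrectangle and at every scale contributing to the supremum in \eqref{defmaximal}. Lemma \ref{lem.doubling.Du<u} demands zeros of $u$ inside the inner ball $E_{r/2}(x)$; these are free for $v$ (thanks to the Dirichlet condition) but for $u$ must be extracted from the standing hypothesis combined with the Harnack-based observation that small scales without zeros contribute only $O(1)$ to the doubling sum. This dichotomy is precisely what forces the statement to split into two alternatives (a genuine drop of $N_u^\partial$ or an empty nodal intersection), rather than the single drop conclusion available in the Dirichlet setting.
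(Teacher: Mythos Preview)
Your plan is essentially correct, but it takes a genuinely different route from the paper's own proof, and one step of your outline is misaimed.

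\textbf{Comparison.} The paper argues by contradiction directly for $u$: assuming every $q\in\mathbb{B}_k(Q)$ has $N_u^\partial(q)>\tfrac12 N$ and $Z(u)\cap q\neq\varnothing$, it uses the almost monotonicity of $N_u$ (Proposition~\ref{prop.u.AlmMnty}, which is licensed precisely by the presence of zeros in each $q$) to show that $|u|$ is exponentially small on $Q\cap\partial\Omega$. It then applies the quantitative Cauchy uniqueness Lemma~\ref{Zhuge} to the Neumann problem itself (with $\epsilon=\sup_{Q\cap\partial\Omega}|u|$ and the conormal term already zero), which forces $J_u(x_Q,\tfrac14\ell_Q)$ to be far smaller than the definition of $N_u^\partial(Q)$ permits. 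No transfer to $v$ occurs in this proposition at all; the stream function was used earlier only to produce the almost monotonicity input. Your route instead black-boxes the Dirichlet drop for $v$ from \cite{ZZ23}, iterates it to an arbitrarily small factor $\alpha$, and then carries the conclusion back to $u$ through a two-sided comparison $N_u^\partial\simeq \widehat N_v^\partial$. This is more modular but costs you the iteration bookkeeping and the additive/multiplicative losses that force you to shrink $\alpha$.

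\textbf{Where your sketch is off.} The Harnack observation you make (``scales with no zero of $u$ give $N_u(x,r)=O(1)$, hence cannot realize the supremum in \eqref{defmaximal}'') controls the maximizer of $N_u^\partial(Q)$, but that is not what you need. The delicate direction of your comparison is $\widehat N_v^\partial(Q)\le CN_u^\partial(Q)+C$, and for this you must bound $\widehat N_v(x,r)$ at the maximizer of the \emph{$v$}-side. There, small $N_u$ does not help (take $u=1+\varepsilon\,r^n\cos n\theta$ in a half-plane: $N_u$ is bounded while $\widehat N_v\sim n$). The correct mechanism is the one you already have available: almost monotonicity \eqref{est.v.almostMnty} of $\widehat N_v$ lets you replace the maximizing scale by $r\simeq\ell_Q$, at which the standing hypothesis furnishes a zero of $u$ inside $E_{r/2}(x)$ so that Lemma~\ref{lem.doubling.Du<u} closes the chain $\widehat N_v\lesssim N_{\nabla v}\simeq N_{\nabla u}\lesssim N_u$. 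With that fix, your argument goes through; the Harnack remark can be dropped.
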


A basic tool for the proof of Proposition \ref{prop.DoublingDrop} is the following version of quantitative Cauchy uniqueness proved in \cite{ZZ23}; also see earlier versions in \cite{LMNN21,T21}.

\begin{lemma}\cite[Lemma 4.1]{ZZ23}
\label{Zhuge}
Let $\Omega$ be a Lipschitz domain and $0\in\partial\Omega.$ There exists $0<\tau<1$ such that if $u$ is an $A$-harmonic function in $B_1 \cap \Omega$ satisfying $\norm{u}_{L^2(B_1 \cap \Omega)} = 1$ and $|u|+|n\cdot A\nabla u|\le \epsilon\le 1$ on $B_1\cap\partial \Omega,$ then
$$ \norm{u}_{L^\infty(B_{1/2} \cap \Omega)} \le C \epsilon^\tau.$$
\end{lemma}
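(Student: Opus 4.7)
The plan is to reduce this boundary quantitative Cauchy uniqueness to an interior propagation-of-smallness statement, by extending $u$ across $B_1\cap\partial\Omega$ with a controlled defect and then running a three-ball inequality on the enlarged domain.

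First, I would flatten the boundary. Since $\Omega$ is Lipschitz and $0\in\partial\Omega$, there is a bi-Lipschitz map $\Phi:B_1^+\to \Phi(B_1^+)\subset B_1\cap\Omega$ sending $B_1\cap\{x_2=0\}$ into $B_1\cap\partial\Omega$ and fixing the origin, where $B_1^{\pm}:=B_1\cap\{\pm x_2>0\}$. The pullback $\tilde u=u\circ\Phi$ solves $\nabla\cdot(\widetilde A\,\nabla\tilde u)=0$ on $B_1^+$ for a bounded measurable, uniformly elliptic $\widetilde A$, the Cauchy smallness transfers as $|\tilde u(x_1,0)|+|(\widetilde A\,\nabla\tilde u)\cdot e_2(x_1,0)|\le C\epsilon$ on $B_1\cap\{x_2=0\}$, and (after possibly shrinking the radius by a fixed bi-Lipschitz factor that is absorbed into the constants) $\norm{\tilde u}_{L^2(B_1^+)}\simeq 1$.

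Next, I would construct a globally defined ``Cauchy extension'' $w$ on $B_1$. Extend $\widetilde A$ to $B_1^-$ by even reflection of the diagonal entries and odd reflection of the off-diagonal entries, producing a bounded measurable, uniformly elliptic $\overline A$ on $B_1$. On $B_1^-$, let $w$ solve $\nabla\cdot(\overline A\,\nabla w)=0$ with Dirichlet data $\tilde u|_{\{x_2=0\}}$ on the interface and $w=0$ on $\partial B_1\cap\{x_2<0\}$; the maximum principle and $|\tilde u|_{\{x_2=0\}}|\le C\epsilon$ give $\norm{w}_{L^\infty(B_1^-)}\le C\epsilon$. Setting $w=\tilde u$ on $B_1^+$ and $w$ as above on $B_1^-$, we obtain $w\in H^1(B_1)$ with matching Dirichlet trace across $\{x_2=0\}$, and $\nabla\cdot(\overline A\,\nabla w)=g$ weakly on $B_1$, where $g$ is supported on $\{x_2=0\}\cap B_1$ and records the jump of the conormal derivative. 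The upper conormal is $O(\epsilon)$ by hypothesis, and the lower conormal is $O(\epsilon)$ by standard boundary regularity applied to the Dirichlet problem with $L^\infty$ data of size $\epsilon$, so $\norm{g}_{H^{-1}(B_1)}\le C\epsilon$.

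Finally, I would invoke an interior propagation-of-smallness estimate for the inhomogeneous equation $\nabla\cdot(\overline A\,\nabla w)=g$ on $B_1$ to carry the $L^\infty$ smallness from $B_1^-$ across the flat interface into $B_{1/2}\cap\Omega$. Concretely, one would obtain a three-ball inequality of the form
\begin{equation*}
\norm{w}_{L^2(B_{1/2})}\le C\bigl(\norm{w}_{L^2(B_1^-)}+\norm{g}_{H^{-1}(B_1)}\bigr)^\tau\bigl(\norm{w}_{L^2(B_1)}+\norm{g}_{H^{-1}(B_1)}\bigr)^{1-\tau}\le C\epsilon^\tau
\end{equation*}
for some $\tau\in(0,1)$, and then upgrade the $L^2$ bound to $L^\infty$ on $B_{1/2}\cap\Omega$ via a Caccioppoli / De Giorgi–Moser iteration (noting that $w=u\circ\Phi^{-1}$ on $B_{1/2}^+$). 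The main obstacle is that the required three-ball inequality must tolerate both the bounded-measurable coefficient $\overline A$, which is discontinuous precisely across $\{x_2=0\}$, and the distributional source $g$ concentrated on that same interface. In two dimensions this is exactly the setting where the Stoilow factorization of Section~\ref{subsec:quasiconformal} applies to the homogeneous part of $w$, reducing the question to a Hadamard three-circles estimate for a harmonic function plus a Newtonian-potential-type bound on the corrector produced by $g$; alternatively, a boundary-adapted Carleman estimate with a weight vanishing to finite order on $\{x_2=0\}$ yields the same conclusion and removes the need for the explicit extension construction.
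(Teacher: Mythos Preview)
The paper does not prove this lemma; it is quoted verbatim from \cite{ZZ23}, so there is no in-paper argument to compare against. Your extension-plus-three-balls sketch, however, has a real gap at the step where you claim $\|g\|_{H^{-1}(B_1)}\le C\epsilon$.

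The offending sentence is ``the lower conormal is $O(\epsilon)$ by standard boundary regularity applied to the Dirichlet problem with $L^\infty$ data of size $\epsilon$.'' No such regularity holds: the Dirichlet-to-Neumann map is bounded $H^{1/2}\to H^{-1/2}$, not $L^\infty\to$ anything useful here. For instance, if the trace were $\epsilon\sin(x_1/\epsilon)$ (sup norm $\epsilon$), the normal derivative of its harmonic extension into $B_1^-$ is $O(1)$ pointwise near the interface. In your situation the trace is $\tilde u|_{\{x_2=0\}}$, whose $H^{1/2}$ norm is controlled only by $\|\tilde u\|_{H^1(B_1^+)}\simeq 1$; hence the lower conormal contribution to $\langle g,\varphi\rangle$, which equals $\int_{B_1^-}\overline A\nabla w\cdot\nabla\varphi$ and is governed by $\|\nabla w\|_{L^2(B_1^-)}$, is a priori $O(1)$, not $O(\epsilon)$. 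With $\|g\|_{H^{-1}}$ merely $O(1)$ your inhomogeneous three-ball estimate yields nothing. The $L^\infty$ smallness of $w$ on $B_1^-$ that you do obtain from the maximum principle cannot be upgraded to $H^1$ smallness up to the interface: any Caccioppoli cutoff must vanish on $\{x_2=0\}$ to be admissible in the Dirichlet weak formulation, and therefore misses exactly the layer where $\nabla w$ concentrates.

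You correctly note at the end that a boundary Carleman estimate avoids the extension altogether; that is indeed the route taken in \cite{ZZ23} (and in the earlier versions \cite{LMNN21,T21} cited alongside it). But that is a different argument from the one you actually wrote out, and it would need to be carried through to constitute a proof.
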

\begin{proof}[Proof of Proposition \ref{prop.DoublingDrop}] 
We prove by contradiction. Assume for any $q\in \mathbb{B}_k(Q), N_u^*(q)>\frac{1}{2}N$ and $q\cap Z(u)\ne \varnothing.$ 

Let $x_Q$ be  the center of $Q$ on $\partial\Omega \cap B_{\theta R}$. 
Put $N=N_u^\partial(Q)$ and $M=J^{\frac{1}{2}}_u(x_Q,2^{S-2}\ell_Q)
=(\int_{E(x_Q,2^{S-2}\ell_Q)\cap\Omega}\mu u^2)^{\frac{1}{2}}.$ Recall that our assumption $\Lambda \in [1,2)$  implies $B(x,\sqrt{2}r) \supset E(x,r) \supset B(x, \frac12 \sqrt{2} r)$.

 By Proposition \ref{prop.u.AlmMnty}, for any $q\in \mathbb{B}_k(Q),$ we have
\begin{equation*}
    N_u^\partial(q) \le C \sup_{x\in q\cap \partial \Omega} \sum_{i=0}^{S-1}N_u(x,2^i \ell_q) \le CS \sup_{x\in q\cap \partial \Omega} N_u(x,2^{S-1} \ell_q).
\end{equation*}
It follows that for any $q\in \mathbb{B}_k(Q),$ there exists $x_q\in q\,\cap \partial\Omega$ such that
\begin{equation*}
    CSN_u(x_q,2^{S-1} \ell_q)\ge N_u^\partial(q) \ge \frac{N}{2}.
\end{equation*}
Therefore, by Proposition \ref{prop.u.AlmMnty} again and for $k_0$ sufficiently large, we deduce that  
\begin{equation}\label{est.lQ/lq}
\begin{aligned}
    \log\frac{J_u(x_q,2^{S-3}\ell_Q)}{J_u(x_q,2^{S-1}\ell_q)}&\ge \sum_{i=0}^{k-3}N_u(x_q,2^{i+S-1}\ell_q)\\
    &\ge C^{-1}(k-2)N_u(x_q,2^{S-1} \ell_q)\\
    &\ge \frac{(k-2)N}{2CS}\ge c_1kN,
    \end{aligned}
\end{equation}
for some constant $c_1 > 0$.

Since $J_u(x_q, 2^{S-3}\ell_Q) \le J_u(x_Q, 2^{S-2}\ell_Q)  \le M^2$, we obtain from \eqref{est.lQ/lq} that
\begin{equation*}
    J_u(x_q, 2^{S-1}\ell_q) \le e^{-c_1 kN} M^2.
\end{equation*}
Hence, by the elliptic regularity for the Neumman boundary value problem, we obtain from the last inequality that
\begin{equation*}
    \sup_{q \cap \partial \Omega} |u|  \le \frac{C}{\ell_q} J_u(x_q, 2^{S-1}\ell_q)^{1/2} \le C\ell_q^{-1} M e^{-c_1kN/2}.
\end{equation*}
As this estimate holds for every $q \in \mathbb{B}_k(Q)$, we then have
\begin{equation*}
    \sup_{Q\cap \partial \Omega} |u| \le C\ell_Q^{-1} 2^k M e^{-c_1kN/2},
\end{equation*}
where we also used the fact $\ell_Q=2^{k}\ell_q.$
 
Now, applying Lemma \ref{Zhuge} to $u$ with vanishing Neumann boundary conditions in $B_{\frac12 \ell_Q}(x_Q)$, 
we can obtain that
\begin{equation*}
    \sup_{E(x_Q,\frac{1}{4}\ell_Q)\cap\Omega}|u|\le CM \ell_Q^{-1}2^{k\tau}e^{-\tau c_1 kN/2},
\end{equation*}
which yields
\begin{equation}\label{est.Ju-upperbd}
J_u(x_Q,\frac{1}{4}\ell_Q)\le CM^2 2^{2k\tau}e^{-\tau c_1 kN}.
\end{equation}

On the other hand, by the definition of the boundary maximal doubling index of $Q$, and taking $r=\frac{1}{4}\ell_Q$, we get 
\begin{equation*}
\log \frac{J_u(x_Q,2^{S-2}\ell_Q)}{J_u(x_Q, \frac14 \ell_Q)} = \log \frac{M^2}{J_u(x_Q,\frac{1}{4}\ell_Q)}
\le N_u^\partial (Q) = N,
\end{equation*}
which results in
\begin{equation}\label{est.lowerbd}
    J_u(x_Q, \frac14 \ell_Q) \ge M^2 e^{-N}.
\end{equation}
Combining (\ref{est.Ju-upperbd}) and (\ref{est.lowerbd}), we obtain 
 $$
 \log C + 2k\tau \log 2 - \tau c_1 k N \ge -N.
 $$
Clearly, this is a contradiction if $k\ge k_0$ and $N\ge N_0$ for $k_0$ and $N_0$ sufficiently large.
\end{proof}

The last proposition deals with the case when $N^\partial (Q)$ is sufficiently large.
The next proposition provides the absence of nodal curves in some boundary rectangles in the case of small boundary maximal doubling index. This is a modified version for Neumann problem, in the same spirit of \cite[Lemma 8]{LMNN21} for Dirichlet problem.
\begin{proposition}\label{prop.NoZeros}
    Suppose $N_u^\partial(Q)\le N$. Then there exists $k_0>0,$ depending on $N$, such that if $k\ge k_0$, there exists $q\in \mathbb{B}_k(Q)$ such that $q\cap Z(u) = \varnothing$.
\end{proposition}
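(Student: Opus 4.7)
The plan is to mimic the contradiction scheme used for Proposition \ref{prop.DoublingDrop}, but to replace the smallness coming from a large doubling index by the pointwise smallness forced by having a zero in every boundary rectangle. Assume, for contradiction, that every $q \in \mathbb{B}_k(Q)$ meets $Z(u)$, and set $M = J_u(x_Q, 2^{S-2}\ell_Q)^{1/2}$ as in the proof of Proposition \ref{prop.DoublingDrop}.

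The first step is to control $u$ pointwise on $\partial \Omega \cap Q$. By combining the boundary $L^\infty$--$L^2$ estimate with the $C^{1,\alpha}$ boundary Schauder estimate for the Neumann problem on a $C^{1,\alpha}$ domain with Lipschitz coefficients, applied on the macroscopic scale $\ell_Q$, one obtains the gradient bound
\[
\| \nabla u\|_{L^\infty(E(x_Q, 2^{S-4}\ell_Q) \cap \overline{\Omega})} \le C \ell_Q^{-2} M.
\]
For each $q \in \mathbb{B}_k(Q)$ pick a zero $z_q \in q \cap Z(u)$. Since $\mathrm{diam}(q) \le C \ell_q = C 2^{-k}\ell_Q$, the mean value theorem gives $|u(y)| = |u(y) - u(z_q)| \le C 2^{-k}\ell_Q^{-1} M$ for every $y \in q$, so together with the vanishing Neumann condition,
\[
\sup_{Q \cap \partial\Omega}\bigl(|u| + |n \cdot A\nabla u|\bigr) \le C\, 2^{-k}\,\ell_Q^{-1}\, M.
\]

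Next I would propagate this boundary smallness to the interior via the quantitative Cauchy uniqueness of Lemma \ref{Zhuge}. After rescaling $B_{\ell_Q}(x_Q)\cap\Omega$ to $B_1 \cap \tilde{\Omega}$, the hypothesis $N_u^\partial(Q) \le N$ applied at $r = \ell_Q/4$ gives the two-sided bound
\[
c\, e^{-N/2}\, \ell_Q^{-1}\, M \;\le\; \|\tilde u\|_{L^2(B_1 \cap \tilde\Omega)} \;\le\; C\, \ell_Q^{-1}\, M,
\]
so the normalized boundary smallness is $\varepsilon \le C\, e^{N/2}\, 2^{-k}$, which is at most $1$ once $k$ is at least some $k_1(N)$. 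Lemma \ref{Zhuge} followed by rescaling back then yields
\[
\sup_{E(x_Q,\, \ell_Q/4) \cap \Omega} |u| \le C\, e^{N\tau/2}\, \ell_Q^{-1}\, M\, 2^{-k\tau},
\]
hence $J_u(x_Q,\,\ell_Q/4) \le C\, e^{N\tau}\, M^2\, 2^{-2k\tau}$. On the other hand, choosing $r = \ell_Q/4$ in the definition \eqref{defmaximal} of $N_u^\partial(Q)$ (so $2^S r = 2^{S-2}\ell_Q$) gives $M^2 \le e^N\, J_u(x_Q, \ell_Q/4)$, and combining with the previous bound yields
\[
M^2 \;\le\; C\, e^{(1+\tau)N}\, M^2\, 2^{-2k\tau},
\]
which is impossible once $k > k_0$ for some $k_0$ depending only on $N$, $A$, and $\Omega$.

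The main technical point is the first step: extracting the factor $2^{-k}$ from the mere existence of a zero in each $q$. It is essential that the gradient bound be taken on the macroscopic scale $\ell_Q$ rather than on the local scale $\ell_q$; a naive local estimate $|\nabla u| \lesssim \ell_q^{-1}\|u\|_\infty$ would yield only the trivial bound $|u| \lesssim \|u\|_\infty$ on $\partial \Omega \cap q$. The macroscopic gradient bound is a classical Schauder-type estimate for the Neumann problem in $C^{1,\alpha}$ domains with Lipschitz coefficients. The rest of the argument follows the scheme of Proposition \ref{prop.DoublingDrop} line by line, with the pointwise vanishing of $u$ at the points $z_q$ replacing the exponential decay of $J_u(x_q, \cdot)$ used there.
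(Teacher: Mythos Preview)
Your proof is correct and uses the same three ingredients as the paper --- the macroscopic gradient bound, the quantitative Cauchy uniqueness of Lemma~\ref{Zhuge}, and the doubling-index hypothesis --- but it applies them in the opposite order. The paper argues \emph{directly}: it first invokes Lemma~\ref{Zhuge} in the contrapositive direction, converting the lower bound $J_u(x_Q,\tfrac14\ell_Q)\ge e^{-N}M^2$ into a lower bound $\sup_{\partial\Omega\cap Q}|u|\ge cM\ell_Q^{-1}e^{-N/(2\tau)}$; it then locates a boundary point $y$ realizing this lower bound and uses the macroscopic gradient estimate $\|\nabla u\|_{L^\infty}\le CM\ell_Q^{-2}$ together with the fundamental theorem of calculus to produce a zero-free ball $B_\theta(y)$ with $\theta\simeq e^{-N/(2\tau)}\ell_Q$, which necessarily contains some $q\in\mathbb{B}_k(Q)$ once $k\gtrsim N/\tau$. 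Your route has the pedagogical advantage of mirroring the contradiction scheme of Proposition~\ref{prop.DoublingDrop} verbatim (zeros in every $q$ $\Rightarrow$ boundary smallness via the gradient bound $\Rightarrow$ interior smallness via Lemma~\ref{Zhuge} $\Rightarrow$ contradiction with $J_u(x_Q,\tfrac14\ell_Q)\ge e^{-N}M^2$), so the two propositions can be presented in a unified way. The paper's version is constructive (it exhibits the zero-free $q$) and avoids the normalization bookkeeping you need in Lemma~\ref{Zhuge}. The resulting threshold $k_0\simeq N/\tau$ is the same in both approaches.
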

\begin{proof}
Let $x_Q$ be  the center of $Q$ on $\partial\Omega$.
Put  $M=J^{1/2}_u(x_Q,2^{S-3}\ell_Q).$   Taking $r=\frac{1}{32}\ell_Q$ in (\ref{defmaximal}), we have 
\begin{equation}\label{est.Qlower}
    \int_{B(x_Q,\frac{1}{4}\ell_Q)\cap\Omega}u^2\ge2^{-1}J_u(x_Q,2^{-3}\ell_Q)\ge 2^{-1} M^2e^{-N}.
\end{equation}
Again, applying Lemma \ref{Zhuge} in $B(x_Q,\frac{1}{2}\ell_Q)\cap\Omega$, we obtain that 
\begin{equation}\label{est.Qupper}
    \sup_{B(x_Q,\frac{1}{4}\ell_Q)\cap\Omega } |u| \le C \Big( \sup_{B(x_Q,\frac{1}{2}\ell_Q)\cap\partial\Omega} |u| \Big)^{\tau} M^{1-\tau} \ell_Q^{\tau-1}
\end{equation}
Combining (\ref{est.Qlower}) and (\ref{est.Qupper}) gives
$$ M\ell_Q^{-1}e^{-\frac{N}{2\tau}}\le C\sup_{B(x_Q,\frac{1}{2}\ell_Q)\cap\partial\Omega}|u|.$$
Hence we know that there exists $y\in B(x_Q,\frac{1}{2}\ell_Q)\cap\partial\Omega $ such that
\begin{equation}\label{positiveu}
   |u(y)|\ge \frac{1}{C}M\ell_Q^{-1}e^{-\frac{N}{2\tau}}. 
\end{equation}
 
On the other hand, the $C^{1,\alpha}$ regularity of the   Neumman boundary value problem implies the boundary gradient estimate of $u$, i.e.,
\begin{equation}\label{positiveu2}
    \norm{\nabla u}_{L^{\infty}(B(x_Q,\ell_Q)\cap\Omega )}\le CM\ell_Q^{-2}.
\end{equation}

Let $\theta =  \frac{1}{C^2}\ell_Q e^{-\frac{N}{2\tau}}$. Then (\ref{positiveu}) and (\ref{positiveu2}) combined imply $|u|>0$ in $B_\theta (y)\cap\Omega$ by the fundamental theorem of calculus. Note that $B_\theta (y)\cap\Omega$ contains at least one $q \in \mathbb{B}_k$ if $k \ge k_0$ with $k_0$ satisfying $2^{-k_0} \le ce^{-\frac{N}{2\tau}}$ for sufficiently small $c>0$ (hence $k_0 \simeq N$). This finishes the proof.
\end{proof}

\subsection{Total length of level curves}
Next, we derive 
the sharp interior estimate of nodal curves based on \cite{Ale88}.
\begin{lemma}\label{interior}
    If $q$ is a square and $u$ is $A$-harmonic in $4\Lambda q,$ then 
    \begin{equation*}
     \mathcal{H}^{1}(Z(u)\cap q)\le CN_u^{*}(q) \ell_q.  
    \end{equation*}
\end{lemma}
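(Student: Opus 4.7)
The plan is to reduce the estimate to Alessandrini's sharp interior nodal-length bound \cite{Ale88} for planar $A$-harmonic functions with Lipschitz coefficients, together with a scaling and covering argument.

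First, by scaling invariance, I would rescale so that $\ell_q = 1$ and $q$ is centered at the origin: note that $\mathcal{H}^1(Z(u) \cap q)/\ell_q$ is scale invariant by the $1$-homogeneity of $\mathcal{H}^1$, and $N_u^*(q)$ is scale invariant since the doubling index is dilation invariant. The rescaled equation remains $A$-harmonic in $4\Lambda q$ with coefficients still satisfying \eqref{ellipticity} and \eqref{Lip} (in fact, the Lipschitz constant only decreases under rescaling by a factor $\ge 1$).

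Next, I would cover the unit square $q$ by a finite collection of Euclidean balls $\{ B(x_i, 1/2) \}_{i=1}^M$ with $x_i \in q$ and $M$ a universal constant. Since $B(x_i, 2) \subset 4\Lambda q$ (recall $\Lambda \ge 1$), the function $u$ is $A$-harmonic in a large neighborhood of each ball. Applying Alessandrini's interior estimate \cite{Ale88}, which is precisely stated for two-dimensional $A$-harmonic functions with Lipschitz coefficients, to each ball yields
\begin{equation*}
    \mathcal{H}^1(Z(u) \cap B(x_i, 1/2)) \le C \log \frac{\| u \|_{L^2(B(x_i, 1))}}{\| u \|_{L^2(B(x_i, 1/2))}} \le C N_u(x_i, 1/2),
\end{equation*}
where the last inequality follows from elliptic regularity and the comparability of Euclidean balls with the ellipsoids $E_r(x_i)$ (using $\Lambda \in [1,2)$ as remarked in Subsection \ref{subsec:DefDoubling}), possibly combined with one application of the interior almost monotonicity for $A$-harmonic functions with Lipschitz coefficients. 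Since $N_u(x_i, 1/2) \le N_u^*(q)$ by the definition of the interior maximal doubling index, summing over the finite cover and undoing the rescaling gives $\mathcal{H}^1(Z(u) \cap q) \le C N_u^*(q) \ell_q$.

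The essential input, and the only genuine difficulty, is Alessandrini's theorem itself; the surrounding argument is a routine localization. The Lipschitz regularity of $A$ is crucial here (and is exactly the standing hypothesis of this section), since the sharp planar bound fails for general Hölder coefficients without additional structure. The remaining bookkeeping — rescaling to unit scale, choosing a universal ball cover, and translating between $L^2$-ratios on Euclidean balls and the doubling quantity $N_u(x,r)$ defined via the ellipsoids $E(x,r)$ — is straightforward.
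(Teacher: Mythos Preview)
Your proposal is correct and follows essentially the same route as the paper: both reduce to Alessandrini's sharp interior nodal bound \cite{Ale88} and then dominate the resulting ratio by $N_u^*(q)$. Two minor points of comparison: the paper applies \cite[Theorem~1]{Ale88} once to $q$ (after rewriting the equation in nondivergence form with unit-determinant leading part, as that theorem requires) and obtains a bound in terms of $\|\nabla u\|_{L^\infty}$-ratios, which it converts to an $L^2$-ratio of $u$ via the existence of a zero of $u$ in $q$ and Poincar\'e; your ball-covering step is harmless but redundant, and your direct citation of an $L^2(u)$-form of Alessandrini's estimate elides exactly this conversion.
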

\begin{proof}
    This estimate is a corollary of \cite[Theorem 1]{Ale88}. In order to apply the theorem, we convert the equation into a form satisfying the desired assumptions. Since $A$ is Lipschitz, we can write the equation for $u$ in the nondivergence form as
    \begin{equation*}
        \text{tr}(A \nabla^2 u) + \text{div}(A) \cdot \nabla u = 0.
    \end{equation*}
    Dividing the equation by $\sqrt{\det(A)}$ and letting $A_1 = A/\sqrt{\det(A)}$, we get
    \begin{equation*}
        \text{tr}(A_1 \nabla^2 u) + b \cdot \nabla u = 0,
    \end{equation*}
    where $b = \text{div}(A)/\sqrt{\det (A)}$. Now clearly $\det(A_1) = 1$ and $b\in L^\infty(4\Lambda q)$.
  
    Hence, by \cite[Theorem 1]{Ale88},
    \begin{equation}\label{eq-NodalLine}
        \mathcal{H}^1(Z(u) \cap q) \le C \ell_q \Big( 1 + \log \frac{\| \nabla u \|_{L^\infty(\frac74 q)}}{ \| \nabla u \|_{L^\infty(\frac54 q)} } \Big).
    \end{equation}
    Without loss of generality, let us assume $Z(u) \cap q$ is not empty, otherwise there is nothing to prove. This means that there exists $x_0 \in q$ such that $u(x_0) = 0$.
    By the elliptic regularity estimate, we have
    \begin{equation}\label{eq-regu}
        \| \nabla u \|_{L^\infty(\frac74 q)} \le C \ell_q \bigg( \fint_{2q} u^2 \bigg)^{1/2}.
    \end{equation}
    Using the fact $u(x_0) = 0$, the elliptic regularity and Poincar\'{e} inequality, we have
    \begin{equation}\label{eq-Caccio}
        \bigg( \fint_{q} u^2 \bigg)^{1/2} \le C\ell_q^{-1} \bigg( \fint_{\frac54 q} |\nabla u|^2 \bigg)^{1/2} \le C\ell_q^{-1} \| \nabla u \|_{L^\infty(\frac54 q)}.
    \end{equation}
    Inserting \eqref{eq-regu} and \eqref{eq-Caccio} into \eqref{eq-NodalLine}, we obtain
    \begin{equation*}
        \mathcal{H}^1(Z(u) \cap q) \le C \ell_q \Big( 1 + \log \frac{\|  u \|_{L^2(2q)}}{ \| u \|_{L^2( q)} } \Big) \le C\ell_q (1 + N_u^*(q)) \le C\ell_q N_u^*(q),
    \end{equation*}
    in view of the definition of $N_u^*(q)$.
\end{proof}

The following lemma bounds the interior maximal doubling index by the boundary maximal doubling index.
\begin{lemma}\label{lem.Bk-Ik}
    Let $Q$ be a boundary rectangle decomposed into $\mathbb{B}_k(Q)$ and $\mathbb{I}_k(Q)$ as before. Then for any $q\in \mathbb{I}_k(Q)$,
    \begin{equation*}
        N_u^* (q) \le C N_u^\partial (Q).
    \end{equation*}
\end{lemma}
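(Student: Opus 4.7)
The plan is to bound $N_u(x,r)$ for $x\in q\in\mathbb{I}_k(Q)$ and $r\in(0,\ell_q)$ by $CN_u^\partial(Q)$, by adapting Case 2 of the proof of Proposition~\ref{prop.IntDoubling} and then converting the resulting local $L^2$-ratio into a boundary doubling at a nearby point $z\in Q\cap\partial\Omega$.

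Fix $x\in q$. Since $q\in\mathbb{I}_k(Q)$, we have $d(x):=\dist(x,\partial\Omega)\ge 2\ell_q$, hence $r<\ell_q\le d(x)/2$. Let $z\in\partial\Omega$ realize $|x-z|=d(x)$. By the Lipschitz graph description of $\partial\Omega$ (with $|\phi'|\le 1$), $z$ lies either in $Q\cap\partial\Omega$ or in an adjacent boundary rectangle of the same side length, affecting only constants; we assume $z\in Q\cap\partial\Omega$ for notational simplicity. Note that $r<4d(x)$, so Case 1 of Proposition~\ref{prop.IntDoubling} (the regime $r>4d(x)$) never arises.

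If $r\le\Lambda^{-1}d(x)/4$ (Case 3 of Proposition~\ref{prop.IntDoubling}), we have $E(x,2r)\subset\Omega$ and the classical interior almost monotonicity gives $N_u(x,r)\le CN_u(x,\Lambda^{-1}d(x)/4)$, reducing us to the other case. If $r\in(\Lambda^{-1}d(x)/4,\ell_q)$ (Case 2), we repeat the flatten--reflect--Lemma~\ref{lem.quasiball} argument from Proposition~\ref{prop.IntDoubling} to obtain
\begin{equation*}
    N_u(x,r)\le C\log\frac{\|u\|_{L^2(B_{\theta^{-1}\Lambda R_x}(x)\cap\Omega)}}{\|u\|_{L^2(B_{\Lambda R_x}(x)\cap\Omega)}},\qquad R_x=4d(x),
\end{equation*}
after absorbing the bounded prefactor $1+\log(R_x/r)$. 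Using the inclusions $B_\rho(x)\subset B_{\rho+d(x)}(z)$ and $B_{\Lambda R_x}(x)\supset B_{(4\Lambda-1)d(x)}(z)$ (valid since $\Lambda R_x>d(x)$), together with the ellipticity-equivalence between $L^2$-norms on Euclidean balls and $J_u$-integrals on the $A$-ellipsoids, we further bound this by $C\log J_u(z,\rho_2)/J_u(z,\rho_1)$ with $\rho_1<\rho_2\le\ell_Q$ and $\rho_2/\rho_1\le C_0\theta^{-1}=C_0 2^S$. A bounded (fixed) number of chained $2^S$-doublings at $z$ then dominates this expression, and each such $2^S$-doubling is $\le N_u^\partial(Q)$ by its definition. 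Taking the supremum over $x\in q$ and $r\in(0,\ell_q)$ yields $N_u^*(q)\le CN_u^\partial(Q)$.

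The main obstacle lies in the boundary-touching regime (Case 2 above), where $E(x,2r)$ exits $\Omega$ and pure interior monotonicity breaks down; this is exactly what the flatten--reflect--quasiconformal device of Proposition~\ref{prop.IntDoubling} addresses. The secondary technical point is the careful bookkeeping of constants when translating from $x$-centered Euclidean balls to $z$-centered $A$-ellipsoids, ensuring that the blow-up of radii $\rho_2/\rho_1$ stays within a fixed multiple of $2^S=\theta^{-1}$ so that the chain of $2^S$-doublings in the definition of $N_u^\partial(Q)$ absorbs it.
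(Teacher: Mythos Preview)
Your approach is essentially the paper's: the paper's proof is the single line ``This is a corollary of Lemma~\ref{lem.bdry-to-interior},'' and you are unpacking that lemma (which itself rests on Proposition~\ref{prop.IntDoubling}) in the specific geometry of $Q$.

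However, there is a bookkeeping error. You assert that after translating to $z$-centered ellipsoids one obtains $\rho_2\le\ell_Q$. This is false. For $q\in\mathbb{I}_k(Q)$ the distance $d(x)$ is bounded below by $2\ell_q$, but it is \emph{not} bounded above by anything comparable to $\ell_q$; since $Q$ has vertical extent $16\ell_Q$, one only has $d(x)\lesssim \ell_Q$. Hence $\rho_2\simeq \theta^{-1}\Lambda R_x=4\theta^{-1}\Lambda d(x)$ can be as large as $C\theta^{-1}\ell_Q$, far exceeding $\ell_Q$. Consequently your ``bounded number of chained $2^S$-doublings with each starting radius $<\ell_Q$'' does not go through as written: once $\rho_1\gtrsim\ell_Q$ (which happens whenever $d(x)\gtrsim\ell_Q$), the first step already falls outside the supremum defining $N_u^\partial(Q)$.

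The repair is to exploit both the $2^S$-width built into $N_u^\partial(Q)$ and the almost monotonicity of Proposition~\ref{prop.u.AlmMnty}. Taking $r$ near $\ell_Q$ in the definition of $N_u^\partial(Q)$ shows that every single doubling $N_u(z,s)$ with $s<2^{S-1}\ell_Q$ is dominated by $N_u^\partial(Q)$ (choose $r=2s/2^S<\ell_Q$). Since $\rho_2/\rho_1\le C_0\theta^{-1}$ with a fixed $C_0$, the ratio $\log J_u(z,\rho_2)/J_u(z,\rho_1)$ is a sum of $O(S)$ single doublings at radii below $\rho_2\lesssim\theta^{-1}\ell_Q$; the ones below $2^{S-1}\ell_Q$ are handled as above, and the (boundedly many) remaining ones at scales between $2^{S-1}\ell_Q$ and $C_0\theta^{-1}\ell_Q$ are controlled by applying Proposition~\ref{prop.u.AlmMnty} at $x_Q$ and comparing back to the $2^S$-ratio at $x_Q$. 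Once you track this carefully the constants close.
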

\begin{proof}
    This is a corollary of Lemma \ref{lem.bdry-to-interior}.
\end{proof}

In the following proposition, we estimate the total length of nodal curves in  boundary rectangles by the maximal doubling index.
\begin{proposition}\label{exterior2}
    Let $Q$ be a boundary rectangle such that $Q\subset B_{\theta R}\cap \Omega$.  Then there exists $C_0$ such that
    \begin{equation}\label{exterior1}
        \mathcal{H}^1(Z(u)\cap Q)\le C_0 N_u^{\partial}(Q)\ell_Q.
    \end{equation}
\end{proposition}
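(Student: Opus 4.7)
The plan is to establish \eqref{exterior1} by induction on the boundary maximal doubling index $N$, following the framework developed in \cite{LMNN21} (and \cite{ZZ23}) with minor modifications to accommodate the Neumann setting already reflected in Propositions \ref{prop.DoublingDrop} and \ref{prop.NoZeros}. I would introduce
\[
F(N) := \sup \frac{\mathcal{H}^1(Z(u)\cap Q)}{\ell_Q},
\]
where the supremum runs over all admissible boundary rectangles $Q\subset B_{\theta R}\cap \Omega$ and $A$-harmonic $u$ satisfying the vanishing Neumann boundary condition with $N_u^\partial(Q)\le N$. The estimate \eqref{exterior1} is then equivalent to $F(N)\le C_0 N$, and my task is to derive a recursive inequality for $F$.

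Fix an integer $k_0$ large enough that Proposition \ref{prop.DoublingDrop} applies. Given $Q$ with $N:=N_u^\partial(Q)>N_0$, I decompose $Q\cap \Omega$ into the $2^{k_0}$ boundary rectangles in $\mathbb{B}_{k_0}(Q)$ and the $O(2^{2k_0})$ interior squares in $\mathbb{I}_{k_0}(Q)$, each of side length $2^{-k_0}\ell_Q$. For an interior square $q\in \mathbb{I}_{k_0}(Q)$, the property $\dist(q,\partial \Omega)\ge 2\ell_q$ allows Lemma \ref{interior} to be applied; combined with Lemma \ref{lem.Bk-Ik} this yields
\[
\mathcal{H}^1(Z(u)\cap q)\le C\, N_u^*(q)\,\ell_q\le C\, N\, \ell_q,
\]
and summing over $\mathbb{I}_{k_0}(Q)$ produces an interior contribution of at most $C\, 2^{k_0} N\, \ell_Q$. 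For $q\in \mathbb{B}_{k_0}(Q)$, the definition of $F$ together with $N_u^\partial(q)\le N$ gives $\mathcal{H}^1(Z(u)\cap q)\le F(N)\,\ell_q$, while Proposition \ref{prop.DoublingDrop} exhibits at least one distinguished rectangle $q^*$ whose contribution is bounded by $F(N/2)\,\ell_{q^*}$ (or is zero). Assembling these bounds and taking the supremum over all admissible pairs $(u,Q)$ produces the recurrence
\[
F(N)\le F(N/2)+C_1\, 2^{2k_0}\, N, \qquad N>N_0,
\]
after absorbing the $(1-2^{-k_0})F(N)$ term to the left.

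For the base case $N\le N_0$, I use Proposition \ref{prop.NoZeros} in place of Proposition \ref{prop.DoublingDrop}. With a larger scale parameter $k^*=k^*(N_0)$, Proposition \ref{prop.NoZeros} produces at least one boundary rectangle in $\mathbb{B}_{k^*}(Q)$ that is free of zeros and hence contributes nothing, so the same absorption step yields $F(N_0)\le C_2 = C_2(A,\Omega)$. Telescoping the main recurrence from $N$ down to $N_0$ then gives $F(N)\le CN+C_2$, and the additive constant is absorbed into $C_0 N$ using a universal lower bound for $N_u^\partial(Q)$ (which follows from the fact that each individual doubling index entering the supremum in \eqref{defmaximal} admits a universal lower bound, cf.\ Proposition \ref{prop.lowbd.doubling}).

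The main obstacle I expect is ensuring that the induction is well-founded, i.e., that $F(N)$ is finite at each step so that the absorption of $(1-2^{-k_0})F(N)$ to the left is legitimate. This is standard: a priori $\mathcal{H}^1(Z(u)\cap Q)$ is bounded on the fixed scale $\ell_Q\le \theta R$, making $F(N)$ finite at each fixed $N$; alternatively one argues by an approximation in which $F$ is first defined over a one-parameter family of scales and the resulting bound is then taken to the limit. A secondary point to verify is that every sub-rectangle $q\in \mathbb{B}_{k_0}(Q)$ produced by the decomposition is itself an admissible boundary rectangle, so that $F$ may be evaluated on it; this follows from the scale-invariance of the definitions in \eqref{defmaximal} together with the monotonicity $N_u^\partial(q)\le N_u^\partial(Q)$ recorded just after \eqref{defmaximal}.
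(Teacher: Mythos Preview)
Your argument is correct in spirit but follows a different induction scheme from the paper. You induct on the doubling-index level $N$ via the sup function $F(N)$ and derive the recurrence $F(N)\le F(N/2)+C\,N$; the paper instead fixes a single $k$ (chosen large enough for both Proposition~\ref{prop.DoublingDrop} and Proposition~\ref{prop.NoZeros} with $N=N_0$) and inducts on \emph{scale}, passing from each $q\in\mathbb{B}_k(Q)$ to $Q$ with a constant $C_0$ fixed in advance so that the inequality $C\,2^k+\big(1-\tfrac{1}{2}\,2^{-k}\big)C_0\le C_0$ closes the induction. To make this scale induction well-founded the paper intersects with an arbitrary compact $W\Subset\Omega$, so that the base case $q\cap W=\varnothing$ is trivial, and only at the end lets $W$ exhaust $Q\cap\Omega$. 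This device neatly sidesteps the finiteness issue you flag for $F(N)$; your suggested fixes (truncating scales, or approximating) would work but are less clean. Another minor organizational difference: you treat the regime $N\le N_0$ with a separate decomposition at scale $k^*(N_0)$, while the paper merges the two cases by observing that at the single scale $k$ the conclusions of Propositions~\ref{prop.DoublingDrop} and~\ref{prop.NoZeros} together always guarantee a rectangle $q_0$ with either $N_u^\partial(q_0)\le\tfrac12 N_u^\partial(Q)$ or $q_0\cap Z(u)=\varnothing$, regardless of whether $N_u^\partial(Q)$ exceeds $N_0$.
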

\begin{proof}
Let $N_0$ and $k_0$ be given by Proposition \ref{prop.DoublingDrop}, which depend only on $\Omega$ and $A$. Let $N=N_0$ in Proposition \ref{prop.NoZeros} and pick $k$ to be the larger $k_0$ in Proposition \ref{prop.DoublingDrop} and Proposition \ref{prop.NoZeros}.

Let $W$ be a compact subset of $\Omega.$ We would like to show
\begin{equation}\label{NodalInW}
     \mathcal{H}^1(Z(u)\cap Q\cap W)\le C_0 N_u^{\partial}(Q)\ell_Q,
\end{equation}
 for some constant $C_0$  independent of $W$. Definitely if $Q$ is small enough such that $Q\cap W=\varnothing$, then $\mathcal{H}^1(Z(u)\cap Q\cap W)=0$.
 We prove (\ref{NodalInW}) by induction from small boundary rectangles to large boundary rectangles.

 Let $(Q,\mathbb{B}_k=\mathbb{B}_k(Q),\mathbb{I}_k=\mathbb{I}_k(Q) )$ be a standard decomposition. Assume for each small boundary rectangle $q\in \mathbb{B}_k,$
 $$ \mathcal{H}^1(Z(u)\cap q\cap W)\le C_0 N_u^{\partial}(q) \ell_q. $$ Note that the base case of induction $q\cap W=\varnothing$ is trivial.

Now consider 
$$  \mathcal{H}^1(Z(u)\cap Q\cap W)\le\sum_{q\in\mathbb{I}_k}\mathcal{H}^1(Z(u)\cap q)+\sum_{q\in\mathbb{B}_k}\mathcal{H}^1(Z(u)\cap q\cap W).$$
By the interior results in Lemma \ref{interior} and Lemma \ref{lem.Bk-Ik},
\begin{equation}\label{interior2}
\begin{aligned} \sum_{q\in\mathbb{I}_k}\mathcal{H}^1(Z(u)\cap q) & \le C \sum_{q\in\mathbb{I}_k}N_u^ {*}(q) \ell_q \le C \sum_{q\in\mathbb{I}_k} C N_u^\partial (Q) \ell_q \\
& \le C 2^{k} N_u^\partial (Q) \ell_Q.
\end{aligned}
\end{equation}

For the remaining boundary rectangles $q\in \mathbb{B}_k,$ we have $N_u^{\partial}(q)\le N_u^{\partial}(Q).$ Combining Proposition \ref{prop.DoublingDrop} and Proposition \ref{prop.NoZeros}( with $N=N_0$), there is  a rectangle $q_0\in \mathbb{B}_k$ such that either $N_u^{\partial}(q_0)\le \frac{1}{2} N_u^{\partial}(Q)$ or $Z(u)\cap q_0=\varnothing.$ 
Recall that $ \mathbb{B}_k$ has $2^k$ boundary subrectangles. Applying the induction argument to each boundary subrectangles, we have 
\begin{equation}\label{exterior}
    \begin{aligned}
     &\sum_{q\in\mathbb{B}_k}\mathcal{H}^1(Z(u)\cap q\cap W)\\
     &=\sum_{q\in\mathbb{B}_k,q\ne q_0}\mathcal{H}^1(Z(u)\cap q\cap W)+\mathcal{H}^1(Z(u)\cap q_0\cap W) \\
     &\le \sum_{q\in\mathbb{B}_k,q\ne q_0}C_0 N_u^{\partial}(q) \ell_q +C_0 N_u^{\partial}(q_0)\ell_{q_0}\\
     &\le \sum_{q\in\mathbb{B}_k,q\ne q_0}C_0 N_u^{\partial}(Q) \ell_q +C_0 \Big( \frac{1}{2} \Big) N_u^{\partial}(Q)\ell_{q_0}\\
     &\le  \left \{  \frac{2^k-1}{2^k}+\frac{(1/2)}{2^k}\right \}C_0 N_u^{\partial}(Q) \ell_Q.
    \end{aligned}
\end{equation}
Now we choose $C_0$ large enough such that 
 $$ C2 ^{k} +  \left \{  \frac{2^k-1}{2^k}+\frac{1/2}{2^k}\right \} C_0\le C_0. $$
Note that $C_0$ does not depend on the compact set $W$. Thus, the combination of (\ref{interior2}) and (\ref{exterior}) gives (\ref{NodalInW}), which implies (\ref{exterior1}) by letting $W$ exhaust $Q\cap\Omega.$
\end{proof}
\begin{proof}[ Proof of  Theorem \ref{thm.LevelSet}  ]
    Theorem \ref{thm.LevelSet}  follows directly from Proposition \ref{exterior2} . Let $Q$ be a boundary rectangle centered on $0$ with $\ell_Q=\theta R.$ Thus 
    \begin{equation*}
         \mathcal{H}^1(Z_t(u) \cap B_{\theta R}) \le  \mathcal{H}^1(Z_t(u) \cap Q)\le  C_0 N_{u-t}^{\partial}(Q)\ell_Q\le CR \log \frac{\| u - t\|_{L^2(B_R \cap \Omega)}}{ \| u -t \|_{L^2(B_{\theta R} \cap \Omega )}},
    \end{equation*}
    where we used Theorem \ref{thm.Doubling} for the last inequality.
\end{proof}

\subsection{The number of boundary level points}
The following is the estimate of the number of level points on $\partial \Omega$, or equivalently the number of nodal curves which touch the boundary for Neumann problem. In the case of analytic boundaries, similar results has been studied in \cite{TZ09} and \cite{Zhu23} with different approaches.
\begin{theorem}\label{thm.levelpoints}
    Under the assumption of Theorem \ref{thm.LevelSet}, for any $t\in \R$, we have
    \begin{equation*}
        \# (\{ x\in \partial \Omega \cap B_{\theta R/2}: u(x) = t \}) \le C \log \frac{\| u - t\|_{L^2(B_R \cap \Omega)}}{ \| u -t \|_{L^2(B_{\theta R} \cap \Omega )}}. 
    \end{equation*}
\end{theorem}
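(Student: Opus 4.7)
By replacing $u$ with $u-t$ (still $A$-harmonic with vanishing Neumann data), we reduce to the case $t=0$: we must bound the number of boundary zeros of $u$ on $\partial\Omega\cap B_{\theta R/2}$.

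The idea is to combine $u$ with its stream function into a single quasiregular complex function and count its zeros via Stoilow factorization and Jensen's formula. Let $v$ be the stream function of $u$, so $v$ is $\widehat A$-harmonic on $\Omega\cap B_R$, $v\equiv 0$ on $\partial\Omega\cap B_R$, and $|\nabla u|\simeq|\nabla v|$. Form $f:=u+iv$, which satisfies the Beltrami equation \eqref{eq-dzf} and is therefore quasiregular in $\Omega\cap B_R$. Crucially, since $v\equiv 0$ on the boundary arc, a point $x\in\partial\Omega\cap B_R$ is a boundary zero of $u$ if and only if $f(x)=0$. Flatten the boundary by a $C^{1,\alpha}$ diffeomorphism $F$ (as in the proof of Theorem \ref{thm.criticalpts}) to obtain $\tilde u=u\circ F$, $\tilde v=v\circ F$ and $\tilde f=\tilde u+i\tilde v$ on the upper half-ball $B_{cR,+}$, with $\tilde v\equiv 0$ on the interface $\{x_2=0\}$. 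Extend $\tilde f$ to all of $B_{cR}$ by the Schwarz reflection $\tilde f(z):=\overline{\tilde f(\bar z)}$ for $\mathrm{Im}(z)<0$—equivalently, extend $\tilde u$ evenly and $\tilde v$ oddly. A short direct computation using $\mathcal R\sigma\mathcal R^{-1}=-\sigma$ (with $\sigma=\mathrm{diag}(1,-1)$) shows that the reflected stream-function identity $\nabla\tilde v=\mathcal R\,A^*\nabla\tilde u$ holds for $A^*=\sigma\widetilde A\sigma$, so the extended $\tilde f$ remains quasiregular on $B_{cR}$. Boundary zeros of $u$ we wish to count thus correspond bijectively to zeros of the extended $\tilde f$ on $\{x_2=0\}\cap B_{c'R}$, which are interior zeros of $\tilde f$ on $B_{c'R}$.

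Apply Stoilow's factorization $\tilde f=g\circ\chi$ on $B_{cR}$, where $\chi$ is a $\Lambda'$-quasiconformal homeomorphism and $g$ is holomorphic. Since $\chi$ is a homeomorphism, zero sets are in bijection, so the zeros of $\tilde f$ in $B_{c'R}$ are in bijection with the zeros of $g$ inside a Euclidean disc $D_1\Subset\chi(B_{cR})$ obtained via the distortion estimate \eqref{distortion}. Jensen's formula for holomorphic functions bounds the zero count in a disc by a logarithmic doubling ratio,
\begin{equation*}
\#\{g=0 \text{ in }D_1\}\le C\log\frac{\|g\|_{L^2(D_2)}}{\|g\|_{L^2(D_1)}},\qquad D_1\Subset D_2\subset\chi(B_{cR}),
\end{equation*}
which, after transferring back through \eqref{distortion}, yields
\begin{equation*}
\#\{\tilde f=0 \text{ in }B_{c_1R}\}\le C\log\frac{\|\tilde f\|_{L^2(B_{c_2R})}}{\|\tilde f\|_{L^2(B_{c_1R})}}
\end{equation*}
for suitable $c_1<c_2$. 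It remains to bound the right-hand side by the doubling quantity of $u$. Since $|\tilde f|^2=\tilde u^2+\tilde v^2$, the upper $L^2$-norm of $\tilde f$ is controlled by $\|u\|_{L^2(B_R\cap\Omega)}$ via the bi-Lipschitz nature of $F$ together with $\|v\|_{L^2}\lesssim R\|\nabla v\|_{L^2}\simeq R\|\nabla u\|_{L^2}\lesssim\|u\|_{L^2}$ (using Poincaré—valid for $v$ thanks to its vanishing Dirichlet trace—and a standard Caccioppoli-type estimate). The lower bound for $\|\tilde f\|_{L^2(B_{c_1R})}$ in terms of $\|u\|_{L^2(B_{\theta R}\cap\Omega)}$ follows from Caccioppoli combined with $|\nabla u|\simeq|\nabla v|$, again using $v=0$ on $\partial\Omega$. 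Invoking Theorem \ref{thm.Doubling} closes the loop.

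\textit{Main obstacle.} The principal difficulty is the final step: propagating the Jensen-type estimate through the quasiconformal Stoilow map $\chi$, the Schwarz reflection, the flattening $F$, and the stream-function duality without losing the logarithmic form. Showing that $\|v\|_{L^2}$ and $\|u\|_{L^2}$ are comparable up to harmless radii rescaling—so that $N_{\tilde f}$ is controlled by the doubling quantity of $u-t$ appearing on the right-hand side—relies essentially on the simultaneous availability of Poincaré for $v$ (from its vanishing Dirichlet trace) and the gradient equivalence $|\nabla u|\simeq|\nabla v|$. Verifying that all the intermediate constants absorb correctly, and that zeros of $g$ counted with multiplicity upper-bound the distinct boundary zeros sought, is the real technical content.
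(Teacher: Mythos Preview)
Your proof is essentially correct, but it takes a genuinely different route from the paper.

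\textbf{The paper's argument.} The paper gives a very short geometric argument that uses Theorem~\ref{thm.LevelSet} as a black box. For each boundary point $y\in\partial\Omega\cap B_{\theta R/2}$ with $u(y)=t$, there is a level curve $\gamma_y\subset\{u=t\}$ emanating from $y$. The key observation is that $\gamma_y$ cannot stay inside $\Omega\cap B_{\theta R}$: if it did, its other endpoint $y'$ would also lie on $\partial\Omega$, and then on the enclosed region $D_y$ the function $u$ would satisfy the vanishing Neumann condition on the $\partial\Omega$-part of $\partial D_y$ and $u\equiv t$ on the $\gamma_y$-part, forcing $u\equiv t$ by uniqueness. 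Hence each $\gamma_y$ exits through $\partial B_{\theta R}\cap\Omega$, so $\mathcal{H}^1(\gamma_y)\ge\theta R/2$. Since distinct $\gamma_y$'s are disjoint, the number of boundary level points is at most $\frac{2}{\theta R}\mathcal{H}^1(Z_t(u)\cap B_{\theta R})$, and Theorem~\ref{thm.LevelSet} finishes.

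\textbf{Comparison.} Your approach---packaging $u+iv$ as a quasiregular map, using that $v\equiv 0$ turns boundary zeros of $u$ into zeros of $f$, Schwarz-reflecting to a full ball, and counting zeros of the holomorphic Stoilow factor via Jensen---is closer in spirit to the critical-point argument in Section~\ref{sec:criticalpoint} than to the paper's proof here. It has the advantage of bypassing Theorem~\ref{thm.LevelSet} entirely (you only need Theorem~\ref{thm.Doubling} to reconcile radii at the end), so it gives an independent derivation. The paper's argument, by contrast, is a two-line corollary of Theorem~\ref{thm.LevelSet} once the topological observation about level curves under Neumann data is made; it is shorter but logically downstream of the full level-set machinery. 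Your sketch is sound: the Schwarz reflection of a quasiregular map that is real on the real axis is quasiregular; the Jensen bound in the form $\#\{g=0\text{ in }B_{r}\}\le C\log\bigl(\sup_{B_{4r}}|g|/\sup_{B_r}|g|\bigr)$ follows by applying Jensen's formula on $B_{3r}(z_0)$ centered at a point $z_0$ where $|g|$ attains $\sup_{B_r}|g|$; and the $L^\infty$ norms transfer through $\chi$ as homeomorphism, with distortion only affecting fixed radius ratios. The conversion between $\|\tilde f\|$ and $\|u\|$ via Poincar\'e for $v$ and Caccioppoli for $u$ works as you indicate.
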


\begin{proof}
    First of all, by Theorem \ref{thm.LevelSet}, we have the estimate of $\mathcal{H}^1(Z_t(u) \cap B_{\theta R})$, which is equal to the total length of the level curves contained in $B_{\theta R} \cap \Omega$. Now for each point $y \in \partial \Omega \cap B_{\theta R/2}$ with $u(y) = t$, there exists a level curve $\gamma_y = \{ x\in \overline{\Omega} \cap B_R: u(x) = u(y) \}$ connected to $y$. Moreover, we claim that this level curve cannot be entirely contained in $\Omega \cap B_{\theta R}$. In fact, if this is the case, by the maximal principle, the other endpoint of $\gamma_y$, called $y'$, is also on $\partial \Omega \cap B_{\theta R}$. Let $\mathcal{I}(y,y')$ be the portion of $\partial \Omega$ that connects $y$ and $y'$.
    Then we consider the region $D_y$ enclosed by $\gamma_y$ and $\mathcal{I}(y,y')$ and thus $\partial D_y = \gamma_y \cup \mathcal{I}(y,y')$. Observe that $u$ is $A$-harmonic in $D_y$, and satisfies the vanishing Neumann boundary condition on $\mathcal{I}(y,y')$ and constant Dirichlet boundary condition on $\gamma_y$ (i.e., $u(x) = t$ for all $x \in \gamma_y$ by our definition of level curve). This implies that $u$ is constant in $D_\gamma$ (and thus $u$ is constant in the entire $\Omega \cap B_R$), which is a contradiction. As a consequence, $\gamma_y$ must extend out of $\Omega \cap B_{\theta R}$ across the surface $\Omega \cap \partial B_{\theta R}$. But since $y \in B_{\theta R/2}$, then $\mathcal{H}^1(\gamma_y) \ge \theta R/2$. Finally, suppose that we have $m$ different points $\{ y_j: j=1,\cdots,m \}$ on $\partial \Omega \cap B_{\theta R/2}$ such that $u(y_i) = t$. Note that all the level curves $\{ \gamma_{y_j} \}$ are mutually disjoint. Hence
    \begin{equation*}
        m\theta R/2 \le \mathcal{H}^1(\cup_{j=1}^m \gamma_{y_j}) \le \mathcal{H}^1(Z_t(u) \cap B_{\theta R} ) \le C R  \log \frac{\| u - t\|_{L^2(B_R \cap \Omega)}}{ \| u -t \|_{L^2(B_{\theta R} \cap \Omega )}},
    \end{equation*}
    where we have used Theorem \ref{thm.LevelSet} in the last inequality. The desired estimate follows.
\end{proof}

The following is a version of Lin's conjecture for the Neumann problem in 2D, which is an easy corollary of Theorem \ref{thm.levelpoints}.
\begin{corollary}\label{Coro.CauchyUnique}
    Let $A$ and $\Omega$ satisfy the same assumption as Theorem \ref{thm.Doubling}. Let $u$ be $A$-harmonic in $\Omega \cap B_R$ and $\frac{\partial u}{\partial \nu} = 0$ on $\partial \Omega \cap B_R$. If for some $p \in \R$, 
    $$\# (\{ x\in \partial \Omega \cap B_{R/2}: u(x) = p \}) = \infty,$$
    then $u \equiv p$ in $\Omega \cap B_R$.
\end{corollary}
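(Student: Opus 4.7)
The plan is to argue by contrapositive. Suppose $u \not\equiv p$ in $\Omega \cap B_R$, and set $w := u - p$; then $w$ is a nontrivial $A$-harmonic function in $\Omega \cap B_R$ satisfying the vanishing Neumann boundary condition on $\partial\Omega \cap B_R$, and the level set in question is precisely $\{x \in \partial\Omega \cap B_{R/2}: w(x) = 0\}$. The goal is to conclude that this set is finite, which contradicts the hypothesis, so that one must in fact have $u \equiv p$.

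Since Theorem \ref{thm.levelpoints} is stated at a single boundary center (the origin) and only bounds level points in $\partial\Omega \cap B_{\theta R/2}$, I would first reduce to a local application by a compactness/covering argument. Because $\overline{B_{R/2}} \cap \partial\Omega$ is compact, there exist finitely many points $y_1,\dots,y_K \in \partial\Omega \cap \overline{B_{R/2}}$ such that $\partial\Omega \cap B_{R/2} \subset \bigcup_{j=1}^K B_{\theta R/8}(y_j)$, and moreover $B_{R/4}(y_j) \subset B_R$ for every $j$, so that $w$ remains defined on $B_{R/4}(y_j) \cap \Omega$. Applying Theorem \ref{thm.levelpoints} (after translating coordinates to place $y_j$ at the origin) to $w$ with $t=0$ and outer radius $R/4$ yields
\begin{equation*}
\#\bigl(\{x \in \partial\Omega \cap B_{\theta R/8}(y_j): w(x) = 0\}\bigr) \le C \log \frac{\|w\|_{L^2(B_{R/4}(y_j) \cap \Omega)}}{\|w\|_{L^2(B_{\theta R/4}(y_j) \cap \Omega)}}
\end{equation*}
for every $j=1,\dots,K$, and summing these $K$ estimates will give the desired finite upper bound on the total count in $\partial\Omega \cap B_{R/2}$.

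The main (and only) subtlety will be ensuring that each denominator on the right is strictly positive, so that the logarithm is finite. For this I would invoke the strong unique continuation property for $A$-harmonic functions with bounded measurable coefficients in two dimensions, recalled in Subsection \ref{subsec:quasiconformal}: if $\|w\|_{L^2(B_{\theta R/4}(y_j) \cap \Omega)} = 0$ for some $j$, then $w$ would vanish on a nonempty open subset of $\Omega \cap B_R$, and SUCP would propagate this to force $w \equiv 0$, contradicting nontriviality. Once the finiteness of each logarithmic ratio is secured, summation over the cover $\{B_{\theta R/8}(y_j)\}_{j=1}^K$ produces a finite bound on $\#\{x \in \partial\Omega \cap B_{R/2}: u(x) = p\}$, contradicting the hypothesis that this set is infinite, and completing the proof.
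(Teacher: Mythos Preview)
Your proof is correct and matches the paper's intended argument; the paper itself gives no proof, merely noting that the corollary is an easy consequence of Theorem~\ref{thm.levelpoints}. Your covering argument to pass from $B_{\theta R/2}$ to $B_{R/2}$ and the invocation of SUCP to ensure the doubling ratios are finite are exactly the details one would fill in (just make sure the local radii in the cover are taken below the threshold $r_0$ required for Theorem~\ref{thm.levelpoints} to apply, which compactness permits).
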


\section{The case \texorpdfstring{$\det A$}{det A} is constant}\label{sec:detA=1}
In this section, we prove Theorem \ref{thm.WithStructure}. Recall the complex analysis tool we introduced in Section \ref{subsec:quasiconformal}. If $u$ is $A$-harmonic in $\Omega$, we can find another real-valued function $v$ such that $f = u + iv$ satisfies the complex Beltrami equation \eqref{eq-dzf}. Under the assumptions of Theorem \ref{thm.WithStructure}, i.e., $A$ is symmetric and $\det A = 1$ (after divided by a constant), we have $\nu(z) = 0$ in \eqref{def.mu+nu}. Thus, \eqref{eq-dzf} is reduced to
\begin{equation*}
    \partial_{\bar{z}} f = \mu \partial_z f.
\end{equation*}
It is crucial to note that in this case, by the definition of $\mu$ in \eqref{def.mu+nu} and the assumption \eqref{A-Holder}, $\mu$ is $C^\alpha$ continuous up to the boundary by continuity.By \cite[Theorem 5.2.3]{AIM09}\footnote{The theorem is stated with $\mu \in C_0^\infty(\C)$. However, we can extend our $\mu \in C^{\alpha}(\overline{\Omega})$ to $C_0^\alpha(\C)$ and apply the Schauder estimate of elliptic system and an approximation argument to construct $\Psi \in C^{1,\alpha}(\C)$ and $\det(\nabla \Psi) \ge c$ globally. Finally, we take $\Psi$ restricted on $\overline{\Omega}$. }, we can construct a principal solution to 
\begin{equation*}
\partial_{\bar{z}} \Psi = \mu \partial_z \Psi,
  \end{equation*}
such that $\Psi:\Omega \to \Omega'$ is a $C^{1,\alpha}$ diffeomorphism and $|\nabla \Psi| \ge c > 0$ for some $c$ depending on $A$ and $\Omega$.
Hence, by Stoilow Factorization theorem \cite[Theorem 5.5.1]{AIM09}, there exits a holomorphic function $w$ such that $f(z) = w \circ \Psi (z)$. Let $h$ be the real part of $w$ (hence $h$ is harmonic), we have
\begin{equation*}
    u = h\circ \Psi \quad \text{and} \quad h = u\circ \Psi^{-1}.
\end{equation*}
Moreover, if $u$ satisfies the Neumann boundary condition $A\nabla u \cdot n = 0$ on $\Gamma \subset \partial \Omega$, then $h$ also satisfies the Neumann boundary condition $\nabla h\cdot n = 0$ on $\Gamma' = \Psi(\Gamma) \subset \partial \Omega'$.

Since $\Psi: \Omega \to \Omega'$ is bi-Lipschitz, 
then the doubling property and the length of level curves of $u$ are equivalent to those of harmonic function $h$ in $\Omega'$.

\begin{proof}[Proof of Theorem \ref{thm.WithStructure}]
By the previous reduction, it is sufficient to consider harmonic function $h$ in $\Omega' = \Psi(\Omega)$ with vanishing Neumann boundary condition. Recall that $\Psi: \Omega \to \Omega'$ is a $C^{1,\alpha}$ diffeomorphism and $|\nabla \Psi| \simeq 1$. If $\Omega$ is $C^{1,\alpha}$, then $\Omega'$ is also $C^{1,\alpha}$, then the desired estimates follows from Theorem \ref{thm.Doubling} and Theorem \ref{thm.LevelSet}. Now if $\Omega$ is convex, then $\Omega'$ may be neither $C^{1,\alpha}$ nor convex. However, it belongs to a class of quasiconvex domain introduced in \cite[Definition 1.1]{ZZ23} with continuity modulus $\omega(r) \lesssim r^\alpha$. 

By \cite[Remark 3.6]{ZZ23}, as before, we obtain the almost monotonicity of the stream function of $h$ in $\Omega'$. By this and repeat the exactly the same argument in Section \ref{sec:DoublingIndex} and Section \ref{sec:levelset}, we obtain the desired estimates. The details are omitted.
\end{proof}

\appendix
\section{A universal lower bound of doubling index}

We show that if $u$ is a nontrivial $A$-harmonic function in $\Omega \cap B_R$ satisfying the Neumann boundary condition on $\partial \Omega \cap B_R$ and $0 \in \partial \Omega$, then $N_u(0,r) \ge c$ and $N_{\nabla u}(0,r) \ge c$ for some universal constant $c>0$ independent of $u$. 
\begin{proposition}\label{prop.lowbd.doubling}
    Let $\Omega$ be $C^{1,\alpha}$ and $0\in \partial \Omega$. Let $u$ be $A$-harmonic in $\Omega \cap B_R$ subject to the vanishing Neumann boundary condition on $\partial \Omega \cap B_R$. Then there exists $\delta \in (0,1)$ depending only on $A$ and $\Omega$ such that for any $r\in (0, R/2)$,
    \begin{equation}\label{est.Jur<2r}
        J(0,r) \le (1- \delta) J(0,2r).
    \end{equation}
    In particular, we have $N_u(0,r) \ge c$ for some constant $c>0$ (depending only on $A$ and $\Omega$).
\end{proposition}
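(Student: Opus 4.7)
The plan is to prove \eqref{est.Jur<2r} by a compactness and unique continuation argument; the doubling index bound is then immediate from
\begin{equation*}
    N_u(0,r) = \log\frac{J_u(0,2r)}{J_u(0,r)} \ge \log\frac{1}{1-\delta} =: c > 0.
\end{equation*}
Suppose for contradiction that no such $\delta$ exists. Then there are nontrivial $A$-harmonic functions $u_k$ on $\Omega\cap B_R$ satisfying the vanishing Neumann boundary condition on $\partial\Omega\cap B_R$ and radii $r_k\in(0,R/2)$ with
\begin{equation*}
    J_{u_k}(0,r_k) \ge (1 - 1/k)\, J_{u_k}(0, 2r_k).
\end{equation*}
Normalize so that $J_{u_k}(0,2r_k)=1$; then the annular deficit $\int_{(E(0,2r_k)\setminus E(0,r_k))\cap\Omega} u_k^2\,dy$ tends to $0$. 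Rescale by setting $v_k(x)=u_k(r_k x)/c_k$ on $\Omega_k:=r_k^{-1}\Omega$, with $c_k$ chosen so that $J_{v_k}(0,2)\simeq 1$. Then $v_k$ is $A_k$-harmonic on $\Omega_k\cap B_{R/r_k}$ with $A_k(x)=A(r_k x)$ and vanishing Neumann boundary condition; $A_k$ satisfies \eqref{ellipticity}--\eqref{A-Holder} uniformly (with improved H\"older seminorm when $r_k\to 0$), and the contradiction hypothesis transports to $\int_{(E(0,2)\setminus E(0,1))\cap\Omega_k} v_k^2\,dy \to 0$.

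Next, by the uniform $C^{1,\alpha}$ boundary regularity for the Neumann problem (available because $A_k$ is $C^\alpha$ and $\Omega_k$ is $C^{1,\alpha}$ uniformly near $0$), the family $\{v_k\}$ is precompact in $C^1_{\mathrm{loc}}$ up to the boundary on a fixed neighborhood of the origin. Passing to a subsequence, $v_k\to v_\infty$ in $C^1_{\mathrm{loc}}$, where $v_\infty$ solves a limiting Neumann problem on $\Omega_\infty$: either $r_k$ is bounded away from $0$ and $(A_\infty,\Omega_\infty)=(A,\Omega)$ (up to a subsubsequence for $r_k$), or $r_k\to 0$, $\Omega_k$ converges to the tangent half-plane $H$ at $0$, and $(A_\infty,\Omega_\infty)=(A(0),H)$. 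In either case, the normalization survives as $J_{v_\infty}(0,2)\simeq 1$, so $v_\infty$ is nontrivial, yet $v_\infty\equiv 0$ on the annulus $(E_\infty(0,2)\setminus E_\infty(0,1))\cap \Omega_\infty$.

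Finally, strong unique continuation forces $v_\infty\equiv 0$, a contradiction. In the blowup case, after an affine change of variables making $A(0)=I$ and $\partial H=\{x_2=0\}$, $v_\infty$ is harmonic on $H$ with $\partial_2 v_\infty=0$ on $\partial H$; its even reflection across $\partial H$ is harmonic on $\R^2$ and vanishes on an open set, hence is identically zero. In the non-blowup case, $v_\infty$ is $A$-harmonic on an open set on which it vanishes on a smaller open set, so classical interior unique continuation in two dimensions (valid for $C^\alpha$ coefficients, e.g.\ via the Stoilow factorization of Subsection \ref{subsec:quasiconformal}) gives $v_\infty\equiv 0$. The main obstacle in executing this plan is to secure \emph{uniform} $C^{1,\alpha}$ boundary regularity for the rescaled Neumann problems, which is what allows both the equation and the boundary condition to be passed to the limit; this is standard under the paper's assumptions but is the step where $C^\alpha$ coefficients and $C^{1,\alpha}$ domains are essential.
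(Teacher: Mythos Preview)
Your compactness-and-unique-continuation argument is essentially correct (modulo the routine care needed when the domains $\Omega_k$ vary: one should obtain the $C^{1,\alpha}$ bounds on, say, $E(0,3/2)\cap\Omega_k$ from the $L^2$ normalization on $E(0,2)\cap\Omega_k$, and then run unique continuation on the sub-annulus $E(0,3/2)\setminus E(0,1)$; also, in the non-blowup case the limit domain is $r_\infty^{-1}\Omega$ rather than $\Omega$ itself, which is harmless). However, the paper takes a much more direct route: it proves the quantitative annular inequality
\[
\int_{B_r\cap\Omega} u^2 \le C\int_{(B_{2r}\setminus B_r)\cap\Omega} u^2
\]
by observing that, thanks to the vanishing Neumann condition and Hopf's lemma on a $C^{1,\alpha}$ boundary, the maximum of $u$ over $B_{3r/2}\cap\Omega$ cannot be attained on $\partial\Omega$ and hence is attained on the spherical portion $\partial B_{3r/2}\cap\Omega$; a local $L^\infty$ estimate for the Neumann problem then gives the inequality, and a hole-filling argument yields \eqref{est.Jur<2r} with an (in principle) explicit $\delta$. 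Your approach is more flexible in that it does not single out the specific mechanism (Hopf's lemma) and would adapt to other boundary conditions once one has the right regularity and a unique continuation principle; the paper's approach is shorter, self-contained, and produces a constant that depends transparently on $A$ and $\Omega$.
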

\begin{proof}
    It suffices to prove \eqref{est.Jur<2r}.
    Without loss of generality, assume $A(0) = I$. We first prove
    \begin{equation}\label{est.Br<B2r}
        \int_{B_r \cap \Omega} u^2 \le C \int_{( B_{2r} \setminus B_r) \cap \Omega} u^2.
    \end{equation}

    By the maximal principle, the maximum of $u$ in $B_{3r/2}$ is attained on $\partial (B_{3r/2} \cap \Omega)$. But due to the Hopf's lemma over $C^{1,\alpha}$ boundary, the local maximum value cannot attained on $\partial \Omega$. Thus
    \begin{equation*}
        \| u \|_{L^\infty(B_{3r/2} \cap \Omega)} = \| u \|_{L^\infty(\partial B_{3r/2} \cap \Omega)}.
    \end{equation*}
    It follows that
    \begin{equation*}
        \int_{B_r \cap \Omega} u^2 \le Cr^d\| u \|_{L^\infty(B_{3r/2} \cap \Omega)}^2 \le Cr^d \| u \|_{L^\infty(\partial B_{3r/2} \cap \Omega)}^2 \le C \int_{( B_{2r} \setminus B_r) \cap \Omega} u^2,
    \end{equation*}
    where in the last inequality, we have used the local $L^\infty$ estimate for the Neumann problem. This proves \eqref{est.Br<B2r}.

  Note that \eqref{est.Br<B2r} yields
    \begin{equation*}
        \int_{B_r \cap \Omega} \mu u^2 \le C \int_{(B_{2r} \setminus B_r) \cap \Omega} \mu u^2. 
    \end{equation*}
    Finally, by a hole-filling argument, we have
    \begin{equation*}
        J(0,r) \le \frac{C}{1+C} J(0,2r) = (1-\delta) J(0,2r),
    \end{equation*}
    as desired.
\end{proof}

\begin{proposition}\label{prop.lowbd.NDu}
    Let $\Omega$ and $u$ be the same as Proposition \ref{prop.lowbd.doubling}. Then there exists $\delta \in (0,1)$ such that for $r\in (0,R/2)$,
    \begin{equation}\label{est.Dur<2r}
        \| \nabla u \|_{L^2(B_r \cap \Omega)} \le (1 - \delta) \| \nabla u \|_{L^2(B_{2r} \cap \Omega)}.
    \end{equation}
    In particular, $N_{\nabla u}(0,r) \ge c$ for some constant $c>0$ (depending only on $A$ and $\Omega$).
\end{proposition}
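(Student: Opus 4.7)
The plan is to mirror the proof of Proposition \ref{prop.lowbd.doubling} but to replace the maximum-principle plus Hopf step (which was tailored to $u$) with a Caccioppoli--Poincar\'e hole-filling argument applied directly to $|\nabla u|^2$. The essential new observation is that since $A\nabla u\cdot n=0$ on $\partial\Omega\cap B_R$, the weak form of $\nabla\cdot(A\nabla u)=0$ may be tested against $(u-c)\eta^2$ for an arbitrary constant $c\in\R$ without generating a boundary term, and this freedom in $c$ is exactly what allows a Poincar\'e inequality to absorb the potential at the end.

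First I would fix a smooth cutoff $\eta$ with $\eta\equiv 1$ on $B_{5r/3}$, $\eta\equiv 0$ off $B_{2r}$, and $|\nabla\eta|\le C/r$, and test the equation against $(u-c)\eta^2$. The vanishing Neumann boundary condition discards the boundary integral, and the standard Cauchy--Schwarz manipulation yields
\begin{equation*}
\int_{B_{5r/3}\cap\Omega}|\nabla u|^2\le \frac{C}{r^2}\int_{D}(u-c)^2,\qquad D:=(B_{2r}\setminus B_{5r/3})\cap\Omega,
\end{equation*}
for every $c\in\R$. Next, choosing $c$ to be the average of $u$ on $D$, I would invoke the Poincar\'e inequality $\int_D (u-c)^2\le Cr^2\int_D|\nabla u|^2$. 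Substituting back and enlarging from $D$ to $(B_{2r}\setminus B_r)\cap\Omega$ gives
\begin{equation*}
\int_{B_r\cap\Omega}|\nabla u|^2\le C\int_{(B_{2r}\setminus B_r)\cap\Omega}|\nabla u|^2.
\end{equation*}
A hole-filling step (add $C\int_{B_r\cap\Omega}|\nabla u|^2$ to both sides and divide by $1+C$) then produces
\begin{equation*}
\int_{B_r\cap\Omega}|\nabla u|^2\le \frac{C}{1+C}\int_{B_{2r}\cap\Omega}|\nabla u|^2=(1-\delta)\int_{B_{2r}\cap\Omega}|\nabla u|^2,
\end{equation*}
with $\delta=1/(1+C)\in(0,1)$ depending only on $A$ and $\Omega$. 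Taking logs then gives the universal lower bound $N_{\nabla u}(0,r)\ge c$.

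The main obstacle is verifying that the Poincar\'e constant on $D$ is uniform in $r\in(0,R/2)$; this requires $D$ to be connected and its shape to be controlled as $r$ shrinks. Since $0\in\partial\Omega$ and $\partial\Omega$ is $C^{1,\alpha}$, for $r_0$ small a bi-Lipschitz $C^{1,\alpha}$ diffeomorphism flattens the boundary with uniformly bounded distortion; rescaling then maps $D$ to a set bi-Lipschitz equivalent to a fixed model half-annulus, on which Poincar\'e is classical. It is also worth noting that the naive alternative of deducing the estimate from $N_u(0,r)\ge c$ (Proposition \ref{prop.lowbd.doubling}) or from the analogous lower bound for the stream function $v$ via the pointwise equivalence $|\nabla v|\simeq|\nabla u|$ produces only a ratio bounded above by a constant, not one strictly below $1$, so the direct hole-filling argument above cannot be bypassed.
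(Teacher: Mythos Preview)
Your argument is correct and follows the same overall scheme as the paper's proof: Caccioppoli inequality (using the vanishing Neumann condition to kill the boundary term and allow an arbitrary constant $c$), then a Poincar\'e inequality on the half-annulus, then hole-filling. The only difference is in bookkeeping. The paper first writes the Caccioppoli inequality with the full ball $B_{2r}\cap\Omega$ on the right, then invokes the estimate \eqref{est.Br<B2r} from Proposition~\ref{prop.lowbd.doubling} (which in turn used the maximum principle and Hopf's lemma) to pass from $B_{2r}\cap\Omega$ down to the annulus $(B_{2r}\setminus B_r)\cap\Omega$, and finally applies Poincar\'e there. You instead use the sharper form of Caccioppoli in which the right-hand side already lives on the support of $\nabla\eta$, so the annulus appears immediately and the appeal to \eqref{est.Br<B2r} is not needed. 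This makes your proof self-contained and avoids the Hopf-lemma step entirely; the paper's version is a hair shorter on the page because it recycles the previous proposition. Your care about the uniformity of the Poincar\'e constant via boundary flattening is a welcome detail that the paper leaves implicit.
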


\begin{proof}
    Let $\ell = \fint_{\partial B_{2r} \cap \Omega}$. Then $u - \ell$ is also a solution. By the Caccioppoli inequality, \eqref{est.Br<B2r} and the Poincar\'{e} inequality, we have
    \begin{equation*}
        \int_{B_r\cap \Omega} |\nabla u|^2 \le Cr^{-2} \int_{B_{2r}\cap \Omega} |u - \ell |^2 \le Cr^{-2} \int_{(B_{2r}\setminus B_r)\cap \Omega} |u - \ell |^2 \le C\int_{(B_{2r}\setminus B_r)\cap \Omega} |\nabla u|^2.
    \end{equation*}
    By a hole-filling argument as before, we obtain \eqref{est.Dur<2r}.
\end{proof}
\bibliographystyle{abbrv}
\bibliography{reference-2}

\end{document}